\newcommand{\EV}[1] {E({#1})}
\newcommand{\VV}[1] {V({#1})}
\newcommand{\VG}[1] {V_G({#1})}
\newcommand{\VL}[1] {V_L({#1})}
\newcommand{\VK}[1] {V_{L_K}({#1})}
\newcommand{\PV}[1] {\Pi({#1})}
\newcommand{\PP}[1] {\Pi^+({#1})}
\newcommand{\tPhi} {\widetilde \Phi}
\newcommand{\tDelta}{\widetilde\Delta}
\newcommand{\yb}{\mathrm{lb}}
\newcommand{\YB}{\mathrm{LB}}
\DeclareMathOperator{\End}  {End}
\DeclareMathOperator{\Pic}  {Pic}
\DeclareMathOperator{\Supp} {Supp}
\DeclareMathOperator{\Proj} {Proj}
\theoremstyle{plain}
\newtheorem*{teoA}{Theorem A}
\newtheorem*{teoB}{Theorem B}
\newtheorem{lem}{Lemma}
\newtheorem{teo}[lem]{Theorem}
\newtheorem{prop}[lem]{Proposition}
\newtheorem{cor}[lem]{Corollary}
\theoremstyle{definition}
\newtheorem{dfn}[lem]{Definition}
\newtheorem{oss}[lem]{Remark}
\let\oldmarginpar\marginpar
\renewcommand\marginpar[1]{\-\oldmarginpar[\raggedleft\footnotesize
    #1]{\raggedright\footnotesize #1}}
\newcommand{\calC}{\mathcal C} \newcommand{\calD}{\mathcal D}
\newcommand{\calL}{\mathcal L} \newcommand{\calM}{\mathcal M}
\newcommand{\calN}{\mathcal N} \newcommand{\calO}{\mathcal O} 
\newcommand{\calX}{\mathcal X} 
\newcommand{\mN}{\mathbb N}  \newcommand{\mZ}{\mathbb Z}
  \newcommand{\mP}{\mathbb P}
\newcommand{\mQ}{\mathbb Q} 
\newcommand{\goU}{\mathfrak U}
\newcommand{\gog}{\mathfrak g}
\newcommand{\sfA}{\mathsf A}
\newcommand{\sfB}{\mathsf B} \newcommand{\sfC}{\mathsf C} \newcommand{\sfD}{\mathsf D}
\newcommand{\sfE}{\mathsf E} \newcommand{\sfF}{\mathsf F} \newcommand{\sfG}{\mathsf G}
\newcommand{\gra}{\alpha} \newcommand{\grb}{\beta}    \newcommand{\grg}{\gamma}
 \newcommand{\grl}{\lambda}  \newcommand{\grs}{\sigma}
\newcommand{\grf}{\varphi}\newcommand{\gro}{\omega} 
\newcommand{\grG}{\Gamma} \newcommand{\grD}{\Delta}  \newcommand{\grL}{\Lambda}
\newcommand{\grO}{\Omega} \newcommand{\grS}{\Sigma}
\newcommand{\mk}  {\Bbbk}
\newcommand{\ra}         {\rightarrow}
\newcommand{\lra}        {\longrightarrow}
\newcommand{\isocan}     {\simeq}
\newcommand{\vuoto}      {\varnothing}
\newcommand{\cech}       {\vee}
\newcommand{\E}          {\exists\,}
\renewcommand{\geq}      {\geqslant}
\renewcommand{\leq}      {\leqslant}
\newcommand{\senza}      {\smallsetminus}
\newcommand{\ristretto}  {\bigr|}
\newcommand{\ol}         {\overline}
\newcommand{\wt}         {\widetilde}
            \newcommand{\st}       {\, : \,}
\newcommand{\mand}     {\text{ and }}
\begin{document}

\title[Normality and non-normality of group
  compactifications]{Normality and non-normality of group
  compactifications in simple projective spaces}

\author[Bravi, Gandini, Maffei, Ruzzi]{Paolo Bravi, Jacopo Gandini,
  Andrea Maffei, Alessandro Ruzzi}


\curraddr{\textsc{Dipartimento di Matematica ``Guido
  Castelnuovo"\\ ``Sapienza" Universit\`a di Roma\\ Piazzale Aldo Moro
  5\\ 00185 Roma, Italy}}

\email{bravi@mat.uniroma1.it; gandini@mat.uniroma1.it;
  amaffei@mat.uniroma1.it; \newline ruzzi@mat.uniroma1.it}

\begin{abstract}
  If $G$ is a complex simply connected semisimple algebraic group and
  if $\grl$ is a dominant weight, we consider the compactification
  $X_\grl \subset \mP\big(\End(\VV{\grl})\big)$ obtained as the closure
  of the $G\times G$-orbit of the identity and we give necessary and
  sufficient conditions on the support of $\grl$ so that $X_\grl$ is
  normal; as well, we give necessary and sufficient conditions on the
  support of $\grl$ so that $X_\grl$ is smooth.
\end{abstract}

\maketitle

\section*{Introduction}

Consider a semisimple simply connected algebraic group $G$ over an
algebraically closed field $\mk$ of characteristic zero.  If $\grl$ is
a dominant weight (with respect to a fixed maximal torus $T$ and a
fixed Borel subgroup $B\supset T$) and if $\VV{\grl}$ is the simple
$G$-module of highest weight $\grl$, then $\End\big(\VV{\grl}\big)$ is a simple
$G\times G$-module. Let $I_\grl \in \End\big(\VV{\grl}\big)$ be the identity map
and consider the variety $X_\grl\subset \mP\big(\End(\VV{\grl})\big)$ given by the
closure of the $G\times G$-orbit of $[I_\grl]$. In \cite{Ka}, S.~Kannan
studied for which $\grl$ this variety is projectively normal, and
this happens precisely when $\grl$ is minuscule. In \cite{Ti},
D.~Timashev studied the more general situation of a sum of
irreducible representations, giving necessary and sufficient
conditions for the normality and smoothness of these
compactifications; however the conditions for normality are not completely
explicit.
In this paper we give an explicit characterization of the normality of $X_\grl$,
which allows to simplify the conditions for the smoothness as well.

To explain our results we need some notation. Let $\grD$ be the set of
simple roots (w.r.t.\ $T\subset B$) and identify $\grD$ with the
vertices of the Dynkin diagram. Define the support of $\grl$ as the
set $\Supp(\grl)=\{\gra\in \grD \st \langle \grl, \gra^\vee \rangle \neq 0
\}$.

\begin{teoA} [see Theorem~\ref{teo:normalita}]
The variety $X_\grl$ is normal if and only if $\grl$ satisfies the
following property:
\begin{itemize}
	\item[$(\star)$]  For every non-simply laced connected component $\Delta'$ of $\Delta$,
	if $\Supp(\grl)\cap \grD'$ contains a long root,
  then it contains also the short root which is adjacent to
  a long simple root.
\end{itemize}
\end{teoA}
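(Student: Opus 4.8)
\emph{Proof idea.} The plan is to reduce the normality of $X_\lambda$ to a representation-theoretic comparison, then to localise that comparison to rank-two root subsystems by a Levi-induction, and finally to settle the rank-two cases by hand. I would first replace $X_\lambda$ by the affine cone over it, namely the monoid $M_\lambda=\overline{\mC^{*}\,\rho_\lambda(G)}\subseteq\End(V(\lambda))$ with $\rho_\lambda\colon G\to GL(V(\lambda))$ the representation: since $X_\lambda=\Proj\,\mk[M_\lambda]$ with $\mk[M_\lambda]$ graded by the degree on $\End(V(\lambda))$, the variety $X_\lambda$ is normal if and only if $\mk[M_\lambda]_e$ agrees with the degree-$e$ piece of the integral closure of $\mk[M_\lambda]$ for all $e\gg0$. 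A matrix-coefficient argument gives $\mk[M_\lambda]_e=\bigoplus_\mu\End(V(\mu))^{*}$, the sum over dominant $\mu$ such that $V(\mu)$ occurs in $V(\lambda)^{\otimes e}$, while by Timashev's description of the normalisation (the normal spherical embedding with the same colored fan) the degree-$e$ piece of the integral closure is $\bigoplus_\mu\End(V(\mu))^{*}$ over the dominant $\mu$ for which $e\lambda-\mu$ is a sum of positive roots. Hence $X_\lambda$ is normal exactly when, for $e\gg0$, every dominant $\mu$ with $e\lambda-\mu$ a sum of positive roots already occurs in $V(\lambda)^{\otimes e}$; equivalently, the tensor semigroup $\Gamma_\lambda=\{(e,\mu)\st V(\mu)\subseteq V(\lambda)^{\otimes e}\}$ is saturated in all large degrees.

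Next I would reduce to $G$ simple — $V(\lambda)$ and its tensor powers decompose as outer tensor products over the simple factors, so $\Gamma_\lambda$ is a direct product — and argue by induction on the rank using Levi subgroups. If $L\subseteq G$ is a standard Levi subgroup, comparing the multiplicity of a $W_L$-maximal dominant weight in $V(\lambda)^{\otimes e}$ with multiplicities in $V_L(\lambda|_L)^{\otimes e}$ shows that a failure of saturation in large degree for $\Gamma_{\lambda|_L}$ forces the same for $\Gamma_\lambda$; conversely, if $\lambda$ satisfies $(\star)$ then so does $\lambda|_L$ for every $L$, since passing to $L$ only shrinks the support and the short simple root adjacent to a long simple root in a non-simply-laced component survives as such in any sub-diagram containing it. Therefore, if $(\star)$ fails, I would walk along the Dynkin diagram from a long simple root of $\Supp(\lambda)$ nearest to the distinguished short root $\alpha_s$ and restrict to the Levi $L$ on that path: by minimality its interior consists of long simple roots outside the support, so $\lambda|_L$ is a positive multiple of the fundamental weight attached to the long endpoint, $L$ is of type $B_m$ (some $m\geq2$) or $G_2$, and $\Supp(\lambda|_L)$ contains a long simple root but not $\alpha_s$ — a configuration shown below to be non-normal, whence $X_\lambda$ is not normal.

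The base cases are the rank-two computations together with the uniform family $V(a\varpi)$, $a\geq1$, where $\varpi$ is the fundamental weight of $B_m$ afforded by the defining orthogonal representation (and similarly for $G_2$ with the adjoint representation). For $L$ of type $A_1$, $A_1\times A_1$ or $A_2$ one checks directly that $\Gamma_{\lambda|_L}$ is saturated in large degree for every dominant $\lambda|_L$, and more generally that for $G$ simply laced $\Gamma_\lambda$ is always saturated in large degree — this is why simply-laced components impose no condition. For the distinguished family one computes $V(a\varpi)^{\otimes e}$: the weights of $V(a\varpi)$ all lie in a single coset modulo a proper sublattice of the root lattice (the $\mZ/2$ ``parity'' carried by the standard orthogonal representation, respectively its $G_2$ counterpart), so for every large even $e$ there is a dominant $\mu$ with $ea\varpi-\mu$ a sum of positive roots in the root lattice yet $V(\mu)\not\subseteq V(a\varpi)^{\otimes e}$; this is exactly the obstruction that $(\star)$ forbids. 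Conversely, when $(\star)$ holds for a non-simply-laced simple $G$, the presence of $\alpha_s$ in $\Supp(\lambda)$ — or the absence of any long simple root there — enriches the weight set of $V(\lambda)$ enough that this residue obstruction disappears and $\Gamma_\lambda$ saturates in large degree; assembling the Levi-induction with these verdicts yields the equivalence with $(\star)$.

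The main obstacle is precisely the induction in the direction where $(\star)$ holds: one must show $\Gamma_\lambda$ is saturated in all large degrees even though, for $G$ of type $D_n$, $E_n$ or $F_4$, the full semigroups $\Gamma_\lambda$ need not be saturated. The crux is a dichotomy — the non-rank-two failures of saturation sit on the boundary of the tensor cone and are washed out once $e$ is large (for $\lambda$ fixed), whereas the $B_m/G_2$ failure is a fixed residue phenomenon persisting in every large even degree. Establishing this dichotomy cleanly, together with the bookkeeping showing that a long simple root far from $\alpha_s$ still forces non-normality through the path-Levi above, is where the real work lies; the rank-two computations themselves are routine.
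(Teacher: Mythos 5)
Your overall architecture matches the paper's: normality of $X_\grl$ is translated (via the De Concini--Procesi description of $\grG(M,\calM_{e\grl})$ and Kannan's results) into the question of whether every dominant $\mu\leq e\grl$ satisfies $\VV{\mu}\subset\VV{\grl}^{\otimes e}$, one reduces to $G$ simple, localises along a path-Levi to types $\sfB_m$ and $\sfG_2$, and settles those cases by hand. However, the justification you give for the crucial base case is false. For $\sfB_m$ the weights of $V(a\omega_1)$ do \emph{not} lie in a single coset of a proper sublattice of the root lattice: already for $a=1$ the weight set is $\{0,\pm\varepsilon_1,\dots,\pm\varepsilon_m\}$, whose differences generate all of $\mZ^m=\mZ\grD$, and the same happens for the adjoint representation of $\sfG_2$ (its weights are the roots together with $0$). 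Consequently there is no ``residue obstruction'' visible at the level of weights: the offending dominant weight $(n-1)\omega_1\leq n\omega_1$ \emph{does} occur as a weight of $V(\omega_1)^{\otimes n}$ (with multiplicity $n$, the weight space being spanned by $v_1^{\otimes i-1}\otimes v_0\otimes v_1^{\otimes n-i}$ with $v_0$ the zero-weight vector); what fails is the existence of a highest-weight vector in that weight space, and proving this requires an actual computation (the paper applies $e_{\gra_r}$ and checks linear independence of the images) or an appeal to Brauer--Schur--Weyl duality for the odd orthogonal group (which forces $|\mu|\equiv n\bmod 2$ for constituents $V_\mu\subset V^{\otimes n}$). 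As written, the heart of the ``only if'' direction rests on a premise that is simply not true.

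The second gap is the ``if'' direction, which you explicitly defer: you assert a dichotomy (failures of saturation in types $\sfD$, $\sfE$, $\sfF_4$ are ``washed out'' for large $e$, while the $\sfB_m/\sfG_2$ failure persists) but give no argument for it, and this is where most of the content of the theorem lies. The paper's proof of sufficiency is a decreasing induction on the dominance order driven by Stembridge's description of that order (consecutive dominant weights differ by the highest short root of a connected subsystem), a translation lemma ($\VV{\nu}\subset\VV{\grl}\otimes\VV{\mu}$ implies $\VV{\nu+\nu'}\subset\VV{\grl+\nu'}\otimes\VV{\mu}$), a Levi-restriction lemma, and explicit containments such as $\VV{\grl}\subset\VV{\eta}\otimes\VV{\grl}$ when $(\star)$ holds; none of this machinery appears in your sketch, so the positive half of the equivalence is not established. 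A smaller point: in the path-Levi reduction for non-normality you must also check that the bad weight produced at the Levi level comes from a $G$-dominant weight below $e\grl$; the paper arranges this by working with the explicitly dominant ``little brother'' of $\grl$.
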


In particular, if the Dynkin diagram of $G$ is simply laced then
$X_\grl$ is normal, for all $\grl$. In the paper we will prove the
theorem in a more general form, for simple (i.e. with a unique closed orbit)
linear projective compactifications of an
adjoint group (see section~\ref{ssez:XSigma}). We will make use of
the wonderful compactification of $G_\mathrm{ad}$, the
adjoint group of $G$, and of the results on projective normality of
these compactifications proved by S.~Kannan in \cite{Ka}. These results
hold in the more general case of a symmetric variety; however
our method does not apply to this more general situation (see
section~\ref{ssez:simmetrichenormalita}).

\begin{teoB} [see Theorem~\ref{smooth Xsigma}]
The variety $X_{\lambda}$ is smooth if and only if $\lambda$ satisfies property $(\star)$ of Theorem~A together with the following properties:
\begin{itemize}
\item [{\em i)}] For every connected component $\Delta'$ of $\Delta$, $\Supp(\lambda)\cap \Delta'$ is connected and, in case it contains a unique element, then this element is an extreme of $\grD'$;
\item [{\em ii)}] $\Supp(\lambda)$ contains every simple root which is adjacent to three other simple roots and at least two of the latter;
\item [{\em iii)}] Every connected component of $\grD\senza \Supp(\lambda)$ is of type $\sfA$.
\end{itemize}
\end{teoB}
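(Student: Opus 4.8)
\emph{Proof plan.}
By Theorem~A (Theorem~\ref{teo:normalita}) a variety that is not normal cannot be smooth, so if $\lambda$ does not satisfy $(\star)$ there is nothing to prove; I therefore assume throughout that $\lambda$ satisfies $(\star)$, so that $X_\lambda$ is normal. It is convenient — and this is the setting of Section~\ref{ssez:XSigma} — to argue for an arbitrary normal simple linear compactification $X_\Sigma$ of $G_{\mathrm{ad}}$, the weight $\lambda$ being replaced by the subset $S\subseteq\grD$ that plays the role of $\Supp(\lambda)$. Being a normal simple $G\times G$-embedding of $G_{\mathrm{ad}}$, such an $X$ carries a distinguished $(B^-\times B)$-stable affine open subset $\Omega$ meeting every $G\times G$-orbit; since the closed orbit is unique, $X=(G\times G)\cdot\Omega$, and $X$ is smooth if and only if $\Omega$ is. By the local structure theorem, if $P\supseteq B$ is the standard parabolic whose Levi $L$ has semisimple part of Dynkin type $\grD\senza S$, then $\Omega\cong R_u(P^-)\times R_u(P)\times Z$ with $Z$ an affine variety acted on by (a conjugate of) $L$ and containing a point $z_0$ of the closed orbit; hence $X$ is smooth iff $Z$ is smooth, and a one-parameter subgroup argument reduces this to testing smoothness of $Z$ at $z_0$.

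The core of the proof is then the explicit description of $Z$ near $z_0$, which I would read off from the construction of $X_\Sigma$ in Section~\ref{ssez:XSigma}. Near $z_0$ the variety $Z$ is built from three kinds of pieces: a ``toric'' affine space of dimension $|S|$, on which $T$ acts with the weights $-\alpha$, $\alpha\in S$; for each connected component of $\grD\senza S$, an affine cone over the flag variety of the corresponding simple factor of $[L,L]$ in its minimal projective embedding, acted on by that factor; and gluing data recording, according to the adjacencies of the Dynkin diagram, how the cones sit over the toric coordinates. The analysis of when this is nonsingular at $z_0$ splits accordingly. The flag-variety cones are smooth — equivalently, are affine spaces — exactly when all components of $\grD\senza S$ are of type $\sfA$, which is condition~iii). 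Granting this, the remaining singularities come from the way the (now linear) parabolic pieces are fibered over the toric coordinates: connectedness of $S\cap\grD'$, together with the requirement that a lone such node be an endpoint of $\grD'$, guarantees that at most one linear piece is attached over each relevant coordinate and that the toric cones that occur are regular — this is condition~i); and a trivalent node $\alpha_0$ of $\grD$ either lying outside $S$ or lying in $S$ with at most one neighbour in $S$ produces a genuine non-smooth point of $Z$ — this is condition~ii). Conversely, if $(\star)$ and i)--iii) all hold, then component by component $Z$ is an affine space near $z_0$, so $X$ is smooth; specialising to $X=X_\lambda$ gives the theorem.

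I expect the main obstacle to be the second step: extracting from the description in Section~\ref{ssez:XSigma} a presentation of $Z$ near $z_0$ clean enough to isolate these three sources of singularity, and in particular handling the trivalent nodes, where the toric coordinates indexed by $S$, the parabolic directions indexed by $\grD\senza S$, and the branching of the diagram all interact and several configurations must be distinguished. The non-simply-laced components introduce no new phenomenon once normality (hence $(\star)$) is assumed, but they do require a finite collection of extra diagram-by-diagram verifications.
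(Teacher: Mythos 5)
Your opening reduction (smooth $\Rightarrow$ normal, so $(\star)$ may be assumed by Theorem~A when $\Sigma=\{\grl\}$; then the local structure theorem reduces smoothness of the compactification to smoothness of the slice $Z$ at a point of the closed orbit) is sound, and your route is genuinely different from the paper's: the paper performs no local analysis at all, but quotes Timashev's smoothness criterion (Theorem~\ref{smooth-timashev}) and combines it with Brion's $\mathbb{Q}$-factoriality criterion (Proposition~\ref{lem Q fattor}, Lemma~\ref{raggi estremali2}, Proposition~\ref{Q-fattorialita}) and with Theorem~A, so that everything becomes combinatorics of the cone $\calC(\wt{X}_\grl)$. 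What you propose is essentially to reprove the content of Timashev's criterion from scratch, and that is exactly the part your sketch leaves out. Moreover the description you give of $Z$ is not correct as stated: the affine cone over the full flag variety of a type $\sfA$ factor of rank at least $2$ is singular in any projective embedding, so ``these cones are smooth exactly when the components of $\grD\senza\Supp(\grl)$ are of type $\sfA$'' cannot be the right formulation. Which cone actually occurs depends on how each component of $\grD\senza\Supp(\grl)$ attaches to $\Supp(\grl)$, and smoothness of $Z$ requires it to be equivariantly a module of the Levi of a very constrained shape; this is governed not only by the Dynkin types but by integrality conditions on the coweight lattice (the cone must be generated by a basis of $\grL^\vee$, and $\calC(\wt X_\grl)^\vee\cap\mZ\grD$ must admit the partition of Theorem~\ref{smooth-timashev}~iii)). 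None of this is derivable from the material of section~\ref{ssez:XSigma} alone, which gives no such local presentation.

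These lattice conditions are where the real content lies, and your plan never addresses them. You attribute ``regularity of the toric cones'' to condition~i), but i) and ii) only give simpliciality of $\calC(\wt X_\grl)$, i.e.\ $\mathbb{Q}$-factoriality; unimodularity is strictly stronger, and in the paper it is precisely here that normality enters: condition $(\star)$ is what forces the attaching Cartan integer to be $-1$ in the identity $\gro^\vee=\sum_{i} i\,\gra^\vee_{j+i}=(l-j+1)\gro^\vee_{\gra_l}+\langle\grb,\gra_{j+1}^\vee\rangle\,\gro_\grb^\vee$, hence that $-\gro_\grb^\vee$ lies in the monoid generated by the extremal rays. So your remark that the non-simply-laced components ``introduce no new phenomenon once $(\star)$ is assumed'' has it backwards: the long/short interaction is exactly what must be checked at this step, and in the paper's necessity argument (needed for the general Theorem~\ref{smooth Xsigma}, where normality of $X_\grl$ is not automatic from smoothness of $X_\Sigma$) it is responsible for the primitivity discussion and the special case $\ol K$ of type $\sfB_2$. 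Until you either derive an honest presentation of $Z$ (in effect reproving Theorem~\ref{smooth-timashev}) or invoke such a criterion and then carry out the lattice bookkeeping that the paper does, the proposal remains a plausible programme rather than a proof.
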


Theorem B can be generalized to any simple and normal adjoint
symmetric variety. Following a criterion of $\mQ$-factoriality for
spherical varieties given by M.~Brion in \cite{Br2}, properties i) and
ii) characterize the $\mathbb{Q}$-factoriality of the normalization of
$X_\grl$ (see Proposition \ref{Q-fattorialita}), while property iii)
arises from a criterion of smoothness given by D.~Timashev in
\cite{Ti} in the case of a linear projective compactification of a
reductive group.

As a corollary of Theorem B, we get that $X_{\lambda}$ is smooth if
and only if its normalization is smooth.


The paper is organized as follows. In the first section we introduce
the wonderful compactification of $G_\mathrm{ad}$ and the normalization of
the variety $X_\grl$. In the second section we prove Theorem~A, and in
the third section Theorem~B.  In the last section we discuss some
possible generalizations of our results.

\section{Preliminaries}\label{sez:preliminari}

\subsection{Notation}\label{ssez:notazioni}
Recall that $G$ is semisimple and
simply connected. Fix a Borel subgroup $B\subset G$, a maximal torus
$T\subset B$ and let $U$ denote the unipotent radical of $B$. Lie
algebras of groups denoted by upper-case latin letters
($G,U,L,\ldots$) will be denoted by the corresponding lower-case german
letter ($\gog, \mathfrak u, \mathfrak l,\ldots$). Let $\Phi$ denote
the set of roots of $G$ relatively to $T$ and $\grD\subset \Phi$ the
basis associated to the choice of $B$. For all $\gra\in \grD$ let
$e_\gra, \gra^{\cech},f_\gra$ be an $\mathfrak{sl}(2)$-triple of
$T$-weights $\gra,0,-\gra$.  Let $\grL$ denote the weight lattice of
$T$ and $\grL^+$ the subset of dominant weights. For all $\gra\in\grD$,
denote by $\omega_\gra$ the corresponding fundamental weight.

If $\grl \in \grL$, recall the definition of its \emph{support}:
\[\Supp(\grl) = \{\gra \in \grD \st \langle \grl, \gra^\vee \rangle \neq 0 \}.\]

If $I \subset \grD$, define its \emph{border} $\partial{I}$, its
\emph{interior} $I^\circ$ and its \emph{closure} $\ol{I}$ as follows:
\[ \partial{I} = 
\{ \gra \in \grD \senza I \st \E \grb \in I \mathrm{ \; such \,  that \; } 
\langle \grb,\gra^\vee \rangle \neq 0\};\]
\[ I^\circ = I \senza \partial{(\grD \senza I)};\]
\[ \overline{I} = I \cup \partial{I}.\]


For $\grl \in \Lambda$, denote by $\calL_\grl$ the line bundle on
$G/B$ whose $T$-weight in the point fixed by $B$ is $-\grl$.
For $\grl$ dominant, $\VV{\grl} = \grG(G/B,\calL_\grl)^*$ is an
irreducible $G$-module of highest weight $\grl$; when we deal with
different groups we will use the notation $\VG{\grl}$.

Denote by $\PV{\grl}$ the set of weights occurring in $\VV{\grl}$ and
set $\PP{\grl}=\PV{\grl}\cap \grL^+$. Let $\grl \mapsto \grl^*$
be the linear involution of $\grL$ defined by $(\VV{\grl})^*\isocan \VV{\grl^*}$,
for any dominant weight $\grl$.

The weight lattice $\grL$ is endowed with the dominance order $\leq$
defined as follows: $\mu \leq \grl$ if and only if $\grl - \mu \in
\mN \grD$. If $\grb = \sum_{\gra \in \grD} n_\gra \gra \in \mZ \grD$,
define its \textit{support over} $\grD$ (not to be confused with the
previous one) as follows:
\[\Supp_\grD(\grb) = \{ \gra \in \grD \st n_\gra \neq 0 \}.\]

We introduce also some notations about the multiplication of
sections. Notice that, for all $\grl,\mu \in\Lambda$, $\calL_\grl
\otimes \calL_\mu =\calL_{\grl+\mu}$.  Therefore, if $\grl,\mu$ are
dominant weights and $n\in \mN$, the multiplication of sections
defines maps as follows:
$$
m_{\grl,\mu}:\VV{\grl}\times \VV{\mu} \ra \VV{\grl+\mu} \; \text{ and }
m_\grl^n : \VV{\grl} \ra \VV{n\grl}.
$$
We will also write $uv$ for $m_{\grl,\mu}(u,v)$ and $u^n$ for
$m^n_\grl(u)$.  Since $G/B$ is irreducible, $m_{\grl,\mu}$ and
$m^n_\grl$ induce the following maps at the level of projective spaces:
$$
\psi_{\grl,\mu}: \mP(\VV{\grl}) \times \mP(\VV{\mu}) \ra \mP(\VV{\grl + \mu})
\;\mand\; \psi^n_\grl: \mP(\VV{\grl}) \ra \mP(\VV{n\grl}).
$$

The following lemma is certainly well known; however we do not know any reference.

\begin{lem} \label{lem:immersioni}
Let $\grl,\mu$ be dominant weights.
\begin{enumerate}[\indent i)]
\item If $\Supp(\grl) \cap \Supp(\mu) = \vuoto$, then the map
  $\psi_{\grl,\mu}\colon \mP(\VV{\grl}) \times \mP(\VV{\mu}) \to \mP(\VV{\grl +
  \mu})$ is a closed embedding.
\item For any $n>0$, the map
$\psi_\grl^n: \mP(\VV{\grl})  \ra  \mP(\VV{n\grl})$ is a closed embedding.
\end{enumerate}
\end{lem}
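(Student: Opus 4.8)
The plan is to work on the flag variety $G/B$ and to turn the multiplications $m_{\grl,\mu}$ and $m^n_\grl$ into ordinary multiplications of sections of line bundles, after which the morphism property of $\psi_{\grl,\mu}$ and $\psi^n_\grl$ is the one already recorded above (irreducibility of $G/B$), and one is left to prove injectivity together with injectivity of the differential. Concretely, I would identify $\VV\grl$ with $\Gamma(G/B,\calL_{\grl^*})$ (Borel--Weil); then $m_{\grl,\mu}$ becomes the multiplication $\Gamma(\calL_{\grl^*})\otimes\Gamma(\calL_{\mu^*})\to\Gamma(\calL_{(\grl+\mu)^*})$, the map $m^n_\grl$ becomes $u\mapsto u^n$, and a point of $\mP(\VV\grl)$ is a nonzero section of $\calL_{\grl^*}$ up to scalar (the scalar ambiguity in these identifications is harmless for the induced maps of projective spaces). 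Throughout I use the \emph{domain property}: on the irreducible variety $G/B$ a product of nonzero sections of line bundles is nonzero. Since $\mP(\VV\grl)\times\mP(\VV\mu)$ and $\mP(\VV\grl)$ are complete, it then suffices to check that each of $\psi_{\grl,\mu}$, $\psi^n_\grl$ is injective on points and has injective differential everywhere, since a proper morphism with these two properties is a closed immersion.

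The key observation is the following. For a dominant weight $\nu$, let $P_\nu\supseteq B$ be the parabolic subgroup whose Levi has $\grD\senza\Supp(\nu^*)$ as its set of simple roots, with projection $\pi_\nu\colon G/B\to G/P_\nu$; then $\calL_{\nu^*}$ is the pull-back along $\pi_\nu$ of a line bundle on $G/P_\nu$, and $\pi_\nu$ induces an isomorphism on global sections. Hence every nonzero $u\in\VV\nu$ is $\pi_\nu^*$ of a section on $G/P_\nu$, so for nonzero $u,u'\in\VV\nu$ the rational function $u/u'$ on $G/B$ is pulled back from $G/P_\nu$; equivalently, regarded as a rational function on $G$, it is invariant under right translation by $P_\nu$. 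Finally, the involution $\grl\mapsto\grl^*$ permutes $\grD$, so the hypothesis $\Supp(\grl)\cap\Supp(\mu)=\vuoto$ in i) is equivalent to $\Supp(\grl^*)\cap\Supp(\mu^*)=\vuoto$, and then $\langle P_\grl,P_\mu\rangle$ is the parabolic with set of Levi simple roots $(\grD\senza\Supp(\grl^*))\cup(\grD\senza\Supp(\mu^*))=\grD$, that is, $\langle P_\grl,P_\mu\rangle=G$. This implication --- disjoint supports forcing the two parabolics to generate $G$ --- is the crux; everything else is formal.

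For part i) I would argue as follows. If $\psi_{\grl,\mu}([u],[v])=\psi_{\grl,\mu}([u'],[v'])$, then after rescaling $u'$ we have $uv=u'v'$, so the rational function $r:=u/u'=v'/v$ on $G/B$ is invariant under right translation by $P_\grl$ (because $u,u'\in\VV\grl$) and by $P_\mu$ (because $v,v'\in\VV\mu$), hence by $\langle P_\grl,P_\mu\rangle=G$, hence constant; so $[u]=[u']$ and $[v]=[v']$. A routine computation shows that the differential of $\psi_{\grl,\mu}$ at $([u],[v])$ is injective provided the following holds: if $\dot u\,v+u\,\dot v\in\mk u v$ then $\dot u\in\mk u$ and $\dot v\in\mk v$. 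Writing $\dot u\,v+u\,\dot v=c\,uv$ and $w:=\dot u-c\,u\in\VV\grl$, this reads $wv=-u\,\dot v$; if $w=0$ the domain property gives $\dot v=0$ (and $\dot u=c\,u$), and symmetrically if $\dot v=0$; otherwise $-w/u=\dot v/v$ is again right-$P_\grl$- and right-$P_\mu$-invariant, hence a constant $a\in\mk$, so $\dot u=(c-a)u\in\mk u$ and $\dot v=a\,v\in\mk v$. Thus $\psi_{\grl,\mu}$ is a closed embedding.

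For part ii): if $\psi^n_\grl([u])=\psi^n_\grl([u'])$, then $u^n=c(u')^n$ for some $c\in\mk^*$, so the rational function $(u/u')^n$ on $G/B$ is a nonzero constant; hence $u/u'$ is a nowhere-vanishing regular function on the complete connected variety $G/B$, therefore a constant, so $[u]=[u']$. Likewise, the differential of $\psi^n_\grl$ at $[u]$ is injective because $u^{n-1}\dot u\in\mk u^n$ forces $u^{n-1}(\dot u-c\,u)=0$ for some $c$, whence $\dot u=c\,u$ by the domain property. So $\psi^n_\grl$ is a closed embedding --- note that this part uses neither the support hypothesis nor the parabolic subgroups of the second paragraph, as is to be expected since part i) is where the content of the lemma lies; accordingly I expect the main obstacle to be exactly the passage ``disjoint supports'' $\Rightarrow$ $\langle P_\grl,P_\mu\rangle=G$ $\Rightarrow$ ``$r$ is constant'', and the correct bookkeeping of $\grl$ versus $\grl^*$ in the Borel--Weil set-up.
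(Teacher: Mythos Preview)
Your proof is correct, and the approach is genuinely different from the paper's. The paper exploits $G$-equivariance to reduce everything to the single point $x=([v_\grl],[v_\mu])$: since $\mP(\VV{\grl})\times\mP(\VV{\mu})$ has a unique closed $G$-orbit, it suffices to check that the fibre over $[v_{\grl+\mu}]$ is $\{x\}$ (immediate for weight reasons) and that $d\psi_x$ is injective. The latter is done by a weight argument in $T$-stable complements: one shows that a nonzero element of $\ker d\psi_x$ of maximal weight $\eta$ would force $\grl-\eta+\mu$ and $\mu-\eta+\grl$ to be simple roots lying simultaneously in $\Supp(\grl)$ and $\Supp(\mu)$. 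Part ii) is handled similarly at the highest-weight point.

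Your argument, by contrast, works uniformly at every point via Borel--Weil and the domain property of sections on $G/B$, and the disjoint-support hypothesis enters through the single clean statement $\langle P_\grl,P_\mu\rangle=G$. This makes the role of the hypothesis more transparent and avoids the inductive weight-chasing, at the cost of invoking the Borel--Weil realisation and the standard fact that a proper, injective, everywhere-immersive morphism of varieties in characteristic zero is a closed embedding. Your bookkeeping of $\grl$ versus $\grl^*$ is correct (the diagram automorphism $-w_0$ permutes $\grD$, so disjointness of supports is preserved), as is the divisor argument for ii): $n\cdot\mathrm{div}(u/u')=0$ forces $u/u'$ to be a global unit on $G/B$, hence a scalar.
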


\begin{proof} $i)$.
  Fix highest weight vectors $v_\grl \in \VV{\grl}$, $v_\mu \in \VV{\mu}$ and
  $v_{\grl+\mu}= v_\grl v_\mu \in \VV{\grl+\mu}$.

  If $V$ is irreducible, then $\mP(V)$ has a unique closed orbit,
  namely the orbit of the highest weight vector.  Consequently, since
  $\mP(V(\grl)) \times \mP(V(\mu))$ has a unique closed orbit, in
  order to prove the claim it suffices to prove that $\psi_{\grl,\mu}$
  is smooth in $x=([v_\grl],[v_\mu])$ and that the inverse image of
  $[v_{\grl+\mu}]$ is $x$. The second claim is clear for weight
  reasons.

  In order to prove that $\psi_{\grl,\mu}$ is smooth in $x$, consider $T$-stable
  complements $U \subset \VV{\grl}$, $V \subset \VV{\mu}$ and $W\subset
  \VV{\grl+\mu}$ of $\mk\,v_\grl$, $\mk\,v_\mu$ and $\mk\,v_{\grl+\mu}$.  So in a
  neighbourhood of $x$ the map $\psi_{\grl,\mu}$ can be described as
  $$
   \psi:U\times V \lra W \;\text{ where } \psi(u,v)= u v_\mu + v_\grl v + u v.
  $$
  The differential of $\psi_{\grl,\mu}$ in $x$ is then given by the
  differential of $\psi$ in $(0,0)$, thus it is described as follows:
  $$
  d\psi_x(u,v)=uv_\mu+v_\grl v.
  $$
 Suppose that $d\psi_x$ is not injective. Since it is $T$-equivariant, consider a
  maximal weight $\eta \in \PV{\grl+\mu}\senza \{\grl+\mu\}$ such that
  there exists a couple of non-zero $T$-eigenvectors $(u,v)\in \ker d
  \psi_x$ with weights respectively $\eta - \mu$ and $\eta - \grl$.
  Suppose that $\eta - \mu \in \PV{\grl}\senza \{\grl\}$ is not
  maximal and take $\gra \in \grD$ such that $\eta - \mu + \gra \in
  \PV{\grl}\senza \{\grl\}$ and $e_\gra u\neq 0$: then
\[
  (e_\gra u)v_\mu + v_\grl (e_\gra v) = e_\gra(u v_\mu + v_\grl v) = 0
\]
and $\eta + \gra \in \PV{\grl + \mu}\senza\{\grl+\mu\}$, against the
maximality of $\eta$. Thus $\eta - \mu$ is maximal in $\PV{\grl}
\senza \{\grl\}$ and similarly $\eta - \grl$ is maximal in $\PV{\mu}
\senza \{\mu\}$.  Therefore, on one hand it must be
\[ \eta - \mu = \grl - \gra \]
with $\gra \in \Supp(\grl)$, while on the other hand it must be
\[ \eta - \grl = \mu - \grb \]
with $\grb \in \Supp(\mu)$. Since $\Supp(\grl) \cap \Supp(\mu) =
\vuoto$, this is impossible and shows that, if $(u, v) \in \ker d
\psi_x$, then it must be $u = 0$ or $v = 0$. Suppose now that $(u,0)
\in \ker d\psi_x$: then $u v_\mu = 0$ and by the irreducibility of
$G/B$ also $u=0$. A similar argument applies if $v=0$.

$ii).$ Suppose that $v,w \in V(\grl)$ are such that $v^{n} = w^{n}$:
then $v=tw$ for some $t\in \mk$. Thus $\psi_\grl^n$ is injective.  Let
us show now that $\psi_\grl^n$ is smooth; it is enough to show it in
$x = [v_\grl]$ where $v_\grl\in V(\grl)$ is a highest weight
vector. Let $V \subset V(\grl)$ be the $T$-stable complement of $\mk
v_\grl$, identified with the tangent space $T_{x} \mP(V(\grl))$. If $v
\in V$, the differential $d (\psi_\grl^n)_{x}$ is described as follows
\[
d (\psi^n_\grl)_{x} (v) = n v_\grl^{n-1} v.
\]
Thus $d (\psi^n_\grl)_{x}$ is injective and $\psi_\grl^n$ is smooth.
\end{proof}

\subsection{The variety $X_\grl$}\label{ssez:Xlambda}
If $\grl$ is a dominant weight, denote by $\EV{\grl}$ the $G\times
G$-module $\End(\VV{\grl})$ and set $X_\grl$ the closure of the
$G\times G$-orbit of $[I_\grl] \in \mP(\EV{\grl})$. More
generally if $\grl_1,\dots,\grl_m$ are dominant weights we define
\[X_{\grl_1, \ldots, \grl_m}
= \ol{G\times G([I_{\grl_1}], \ldots, [I_{\grl_m}])} \subset
\mP(\EV{\grl_1}) \times \cdots \times \mP(\EV{\grl_m}).
\]
Since $\EV{\grl}$ is an irreducible $G\times G$-module of highest
weight $(\grl,\grl^*)$, as a consequence of Lemma \ref{lem:immersioni}
we get that if $\grl$ and $\mu$ have non-intersecting supports and if
$n\in \mN$ then
$$
X_{\grl+\mu}\simeq X_{\grl,\mu} \qquad \mathrm{ and } \qquad X_{n\grl}\isocan X_\grl.
$$
As a consequence we get the following proposition:

\begin{prop}\label{prp:supporto}Let $\grl,\mu$ be dominant weights. Then
$X_\grl \simeq  X_\mu$ as $G\times G$-varieties
if and only if $\grl$ and $\mu$ have the same support. Moreover, if
$\Supp(\grl)=\{\gra_1,\dots,\gra_m\}$ then
$$
X_\grl \isocan X_{\omega_{\gra_1},\dots,\omega_{\gra_m}}.
$$
\end{prop}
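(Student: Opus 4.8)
The plan is to derive the ``iff'' from the last, explicit isomorphism, which in turn follows by iterating the two isomorphisms recalled just before the statement. Write $\Supp(\grl)=\{\gra_1,\dots,\gra_m\}$, so $\grl=\sum_{i=1}^m n_i\,\omega_{\gra_i}$ with each $n_i=\langle\grl,\gra_i^\cech\rangle\geq 1$. The weights $\omega_{\gra_1},\dots,\omega_{\gra_m}$ have pairwise disjoint (indeed singleton) supports, and so do $n_1\omega_{\gra_1},\dots,n_m\omega_{\gra_m}$; iterating $X_{\nu+\nu'}\isocan X_{\nu,\nu'}$ (with spectator factors — at each step the support of the partial sum is disjoint from that of the next summand) gives $X_\grl\isocan X_{n_1\omega_{\gra_1},\dots,n_m\omega_{\gra_m}}$, and applying $X_{n\nu}\isocan X_\nu$ in each factor — equivalently, taking the product over $i$ of the closed embeddings supplied by point ii) of Lemma~\ref{lem:immersioni} for the irreducible $G\times G$-modules $\EV{\omega_{\gra_i}}$ — gives $X_{n_1\omega_{\gra_1},\dots,n_m\omega_{\gra_m}}\isocan X_{\omega_{\gra_1},\dots,\omega_{\gra_m}}$. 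Composing yields the final assertion, and the ``if'' implication is then immediate: if $\Supp(\grl)=\Supp(\mu)$, then $X_\grl$ and $X_\mu$ are both $G\times G$-isomorphic to the same $X_{\omega_{\gra_1},\dots,\omega_{\gra_m}}$.

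For the ``only if'' implication I would read $\Supp(\grl)$ off the closed orbit of $X_\grl$. First, $X_\grl$ has a unique closed $G\times G$-orbit $Y_\grl$: it contains a closed orbit (any $G\times G$-variety does, e.g.\ one of minimal dimension), such an orbit is closed in $\mP(\EV\grl)$ as well, and since $\EV\grl$ is an irreducible $G\times G$-module, $\mP(\EV\grl)$ has exactly one closed $G\times G$-orbit — the orbit of the highest weight line, the fact recalled in the proof of Lemma~\ref{lem:immersioni} — so $Y_\grl$ must be that one. Writing $\EV\grl=\VV\grl\otimes\VV\grl^*$ as a $G\times G$-module, the highest weight line is spanned by $v_\grl\otimes\grf_\grl$ with $v_\grl$ a highest weight vector of $\VV\grl$ and $\grf_\grl$ one of $\VV\grl^*$; hence $Y_\grl\isocan G/P\times G/P'$, where $P=\mathrm{Stab}_G([v_\grl])$ and $P'=\mathrm{Stab}_G([\grf_\grl])$, the first factor carrying the action of the first copy of $G$ and the second that of the second copy. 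The point is that $P$ is exactly the standard parabolic of type $\grD\senza\Supp(\grl)$ (it contains the root subgroup $U_{-\gra}$ precisely when $\gra\notin\Supp(\grl)$), and $P'$ the one of type $\grD\senza\Supp(\grl^*)$; so $Y_\grl$, as a $G\times G$-variety, determines $\Supp(\grl)$.

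To finish, a $G\times G$-isomorphism $X_\grl\isocan X_\mu$ restricts to a $G\times G$-isomorphism $Y_\grl\isocan Y_\mu$. The first projection $Y_\grl\to G/P_\grl$ realizes $G/P_\grl$ as the geometric quotient of $Y_\grl$ by $\{1\}\times G$ (and similarly for $\mu$); being $G\times G$-equivariant, the isomorphism descends to an isomorphism $G/P_\grl\isocan G/P_\mu$ of $G$-varieties, for the action of the first copy of $G$. An isomorphism of partial flag varieties forces the two parabolics to be conjugate, hence — both being standard — equal; therefore $\grD\senza\Supp(\grl)=\grD\senza\Supp(\mu)$, i.e.\ $\Supp(\grl)=\Supp(\mu)$.

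I expect the only real difficulty to be precisely this last direction: exhibiting a $G\times G$-invariant of $X_\grl$ that distinguishes the support. Neither the open orbit (which is $G_\mathrm{ad}$ for every $\grl$) nor the dimension (always $\dim G$) helps, so one must look at the boundary; the cleanest invariant there is the closed orbit, a product of two flag varieties whose types are dictated by $\Supp(\grl)$ and $\Supp(\grl^*)$. The remainder is bookkeeping — the final isomorphism and the ``if'' part merely combine the two isomorphisms recalled above — and the passage from closed orbits back to parabolic subgroups and their types is standard.
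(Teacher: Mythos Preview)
Your argument is correct and follows the same approach as the paper: the sufficiency and the explicit isomorphism come from iterating $X_{\nu+\nu'}\isocan X_{\nu,\nu'}$ and $X_{n\nu}\isocan X_\nu$, and necessity is read off the unique closed $G\times G$-orbit, which is the partial flag variety determined by $\Supp(\grl)$. The paper is simply terser, asserting in one line that the closed orbits being $G\times G$-isomorphic is equivalent to equality of supports, whereas you spell out the passage through $G/P_\grl\times G/P_{\grl^*}$ and the recovery of the standard parabolic.
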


\begin{proof} By the discussion above we have to prove only that the
condition is necessary. This follows by noticing that if $X_\grl$ and
$X_\mu$ are $G\times G$-isomorphic then also their closed $G\times G$-orbits
are isomorphic, which is equivalent to the fact that
$\grl$ and $\mu$ have the same support.
\end{proof}

\subsection{The wonderful compactification of $G_{\mathrm{ad}}$ and the normalization of $X_\grl$}
\label{ssez:meravigliosa}

When $\grl$ is a regular weight (i.e.\ $\Supp (\grl)=\grD$) the
variety $X_\grl$ is called the wonderful compactification of $G_\mathrm{ad}$
and it has been studied by C.~De Concini and C.~Procesi in
\cite{CP}. We will denote this variety by $M$: it is smooth and the
complement of its open orbit is the union of smooth prime divisors with
normal crossings whose intersection is the closed orbit.
The closed orbit of $M$ is isomorphic to $G/B \times G/B$
and the restriction of line
bundles determines an embedding of $\Pic(M)$ into $\Pic(G/B\times G/B)$,
that we identify with $\grL \times \grL$ as before; the image of this
map is the set of weights of the form $(\grl,\grl^*)$. Therefore
$\Pic(M)$ is identified with $\grL$ and we denote by $\calM_{\grl}$ a
line bundle on $M$ whose restriction to $G/B \times G/B$ is isomorphic
to $\calL_\grl \boxtimes \calL_{\grl^*}$.  If $D\subset M$ is a
$G\times G$-stable prime divisor then the line bundle defined by $D$ is of the form
$\calM_{\gra_D}$, where $\gra_D$ is a simple root.
The map $D\mapsto \gra_D$ defines a bijection between the
set of $G\times G$-stable prime divisors and $\grD$, and we denote by $M_\gra$
the prime divisor which corresponds to a simple root $\gra$. We denote by
$s_\gra$ a section of $\calM_\gra$ whose associated divisor is $M_\gra$;
notice that such a section is $G\times G$-invariant. More
generally if $\nu = \sum_{\gra \in \grD} n_{\gra}\gra \in \mN \grD$,
set $s^\nu = \prod_{\gra \in \grD} s_{\gra} ^{n_{\gra}} \in
\grG(M,\calM_\nu)$. Then, given any $\grl\in \grL$, the multiplication by
$s^\nu$ injects $\grG(M,\calM_{\grl - \nu})$ in $\grG(M,\calM_\grl)$.

If $\grl$ is a dominant weight, the map $G_\mathrm{ad}\lra\mP(\EV{\grl})$
extends to a map $q_\grl:M\lra \mP(\EV{\grl})$ (see \cite{CP}) whose
image is $X_\grl$ and such that
$\calM_\grl=q_\grl^*(\calO_{\mP(\EV{\grl})}(1))$. If we pull back
the homogeneous coordinates of $\mP(\EV{\grl})$ to
$M$, we get then a submodule of $\grG(M,\calM_\grl)$
which is isomorphic to $\EV{\grl}^*$; by abuse of notation we will denote this
submodule by $\EV{\grl}^*$.

If $\grl \in \grL$, in \cite[Theorem 8.3]{CP} the following
decomposition of $\grG(M,\calM_\grl)$ is given:
\[\grG(M,\calM_\grl) = \bigoplus_{\mu\in \grL^+ \st \mu \leq \grl} s^{\grl-\mu}\EV{\mu}^*.\]

Consider the graded algebra $A(\grl)=\bigoplus_{n=0}^\infty A_n(\grl)$,
where $A_n(\grl)= \grG(M,\calM_{n\grl})$, and set $\widetilde{X}_\grl=
\Proj A(\grl)$. We have then a commutative diagram as follows:
\[\xymatrix{M \ar@{->>}^{p_\grl}[r] \ar@{->>}_{q_\grl}[dr]
& \widetilde{X}_\grl \ar@{->>}^{r_\grl}[d]\\  & X_\grl}\]

In \cite{Ka}, it has been shown that $A(\grl)$ is generated in degree
$1$ and in \cite{DC} that $r=r_\grl$ is the normalization of $X_\grl$.
Notice that the projective coordinate ring of $X_\grl \subset
\mP(\EV{\grl})$ is given by the graded subalgebra $B(\grl)=\bigoplus_{n=0}^\infty
 B_n(\grl)$ of $A(\grl)$ generated by $\EV{\grl}^* \subset \grG(M,\calM_\grl)$.

\subsection{The variety $X_\Sigma$} \label{ssez:XSigma}
We consider now a generalization of the variety $X_\grl$. Let $\Sigma$
be a finite set of dominant weights and denote $\EV{\Sigma} =
\bigoplus_{\mu \in \Sigma} \EV{\mu}$; let
$x_\Sigma=[(I_\mu)_{\mu\in\Sigma}]\in \mP(\EV{\Sigma})$ and define
$X_\Sigma$ as the closure of the $G\times G$-orbit of $x_\Sigma$ in
$\mP(\EV{\Sigma})$. If $\Sigma = \{\grl\}$, then we get the variety
$X_\grl$, while if $\grS = \PP{\grl}$ we get its normalization
$\widetilde X _\grl$.  Notice that the diagonal action of $G$ fixes
the point $x_\Sigma$ so we have a $G\times G$ equivariant map $G\lra
X_\Sigma$ given by $g \longmapsto (g,1)x_\Sigma$.  This map induces a
map from $G_{\mathrm{ad}}$ to $X_\Sigma$ if and only if the action of the
center of $G\times G$ on $E(\grl)$ is the same for all $\grl \in
\Sigma$ or equivalently if $\grS$ is contained in a coset of $\grL$
modulo $\mZ \grD$. In this case we say that $X_\Sigma$ is a
\emph{semi-compactification} of $G_{ad}$.  If $G_{\mathrm{ad}}$ is a simple
group and and $\Sigma \neq \{ 0 \}$ then $X_\Sigma$ is a compactification
of $G_{\mathrm{ad}}$, while if $G_{\mathrm{ad}}$ is not simple we can only say that is a
compactification of a group which is a quotient of $G_{\mathrm{ad}}$.

We say that $\Sigma$ is \emph{simple} if there exists $\grl\in\Sigma$
such that $\Sigma \subset \PP{\grl}$ or equivalently if $\grS$
contains a unique maximal element with respect to the dominance order
$\leq$.  Notice also that if $\grl \in \Sigma$ is such that for all
$\mu \in \Sigma$ different from $\grl$ the vector $\mu - \grl$ is not
in $\mQ_{\geq 0}[\grD]$ then is easy to construct a cocharacter $\chi
: \mk^*\lra G\times G$ such that $\lim_{t\to 0}\chi(t)x_\Sigma $ is
the highest weight line in $\mP(E(\grl))$. In particular $X_\Sigma$ is
a simple $G\times G$ semi-compactification of $G_{\mathrm{ad}}$ if and only if
$\Sigma$ is simple.

By the description of the normalization of $X_\grl$ is $\Sigma$ is
simple and $\grl \in \grS$ is the maximal element, then we get
$$\xymatrix{\widetilde X_\grl\ar[r]^{r} & X_\Sigma \ar[r] & X_\grl}$$
In particular, it follows that $r=r_\Sigma:\widetilde X_\grl \lra X_\Sigma$ is
the normalization of $X_\Sigma$. 

If $\grS$ is simple, denote $B(\Sigma)=\bigoplus_{n=0}^\infty
B_n(\Sigma)$ the projective coordinate ring of $X_\Sigma\subset
\mP(\EV{\Sigma})$: it is the subalgebra of $A(\grl)$ generated by
$\EV{\Sigma}^*\subset \grG(M,\calM_\grl)$.

\begin{oss}
The discussion above and the fact that in $\mP(E(\grl))$ there is only
one point fixed by the diagonal action of $G$ (the line of scalar
matrices) proves that any $G\times G$ linear projective
compactification of $G_{\mathrm{ad}}$ is of the form $X_\Sigma$. A
projective $G\times G$-variety $X$ is said to be \emph{linear} if
there exists an equivariant embedding $X\subset \mP(V)$ where $V$ is a
finite dimensional rational $G\times G$-module.  In particular as a
consequence of Sumihiro's Theorem (see for example \cite[Corollary
  2.6]{KKLV}) all normal projective compactifications are linear. In
this paper we study only linear compactifications.
\end{oss}

\section{Normality}\label{sez:normalita}

In this section we determine for which simple $\Sigma$ the variety
$X_\Sigma$ is normal, proving in particular Theorem~A. In the
following, by $\lambda$ we will always denote the maximal element of
$\Sigma$.

Let $\grf_\grl \in \EV{\grl}^*$ be a highest weight vector and set
$X_\Sigma^\circ \subset X_\Sigma$ the open affine subset defined by the
non-vanishing of $\grf_\grl$. In particular, we set $\wt X_\grl =
X_{\PP{\grl}}$ and notice that $\wt X ^\circ_\grl =
r^{-1}(X_\Sigma^\circ)$. Notice that $X^\circ_\Sigma$ is $B\times B$-stable and,
since it intersects the closed orbit, it intersects every orbit:
therefore $X_\Sigma$ is normal if and only if $X_\Sigma^\circ$ is
normal if and only if the restriction $r\ristretto_{\wt X
^\circ_\grl} : \wt X^\circ_\grl \to X^\circ_\Sigma$ is an isomorphism.
Denote by $\bar B(\Sigma)$ the coordinate ring of
$X^\circ_\Sigma$ and by $\bar A(\grl)$ the coordinate ring of
$\wt X^\circ_\grl$; then we have
$$
\bar A(\grl)= \{\frac{\grf}{\grf_\grl^n}\st \grf\in A_n(\grl)\}
\supset \{\frac{\grf}{\grf_\grl^n}\st \grf\in B_n(\Sigma)\} = \bar B(\Sigma)
$$
and $X_\Sigma$ is normal if and only if $\bar A(\grl)=\bar B
(\Sigma)$. The rings $\bar A(\grl)$ and $\bar B(\Sigma)$ are not $G\times G$-modules, however
since $X^\circ_\Sigma$ is an open subset of $X_\Sigma$ we still have an
action of the Lie algebra $\gog\oplus\gog$ on them.

By \cite{Ka}, $\bar A(\grl)$ is generated by the elements of the
form $\grf/\grf_\grl$ with $\grf\in A_1(\grl)$. In particular we have the
following lemma.

\begin{lem}\label{lem:general-normality}
The variety $X_\Sigma$ is normal if and only if for all $\mu\in\grL^+$
such that $\mu\leq\grl$ there exists $n>0$ such that
\[s^{\grl-\mu}\EV{\mu+(n-1)\grl}^*
\subset B_n(\Sigma).\]
\end{lem}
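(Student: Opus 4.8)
The plan is to translate the equality $\bar A(\grl)=\bar B(\Sigma)$ into a statement about which graded pieces of $A(\grl)$ are generated by the subspace $\EV{\Sigma}^*\subset A_1(\grl)$, and then to identify the relevant obstruction using the De Concini--Procesi decomposition $A_n(\grl)=\grG(M,\calM_{n\grl})=\bigoplus_{\mu\leq n\grl}s^{n\grl-\mu}\EV{\mu}^*$. First I would recall, following Kannan, that $\bar A(\grl)$ is generated as an algebra by $\{\grf/\grf_\grl^{\phantom{n}}:\grf\in A_1(\grl)\}$, so $\bar A(\grl)=\bar B(\Sigma)$ if and only if every such generator lies in $\bar B(\Sigma)$, i.e.\ for every $\grf\in A_1(\grl)$ there is some $n>0$ with $\grf_\grl^{\,n-1}\grf\in B_n(\Sigma)$. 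Since $A_1(\grl)=\bigoplus_{\mu\in\grL^+,\,\mu\leq\grl}s^{\grl-\mu}\EV{\mu}^*$ and each summand is an irreducible $G\times G$-module, and since the action of $\gog\oplus\gog$ on $\bar A(\grl)$ together with multiplication by the invariant $\grf_\grl$ preserves these isotypic pieces appropriately, it suffices to check the condition one $\mu$ at a time: $X_\Sigma$ is normal iff for every dominant $\mu\leq\grl$ there exists $n>0$ with $\grf_\grl^{\,n-1}\cdot s^{\grl-\mu}\EV{\mu}^*\subset B_n(\Sigma)$.

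The key computation is to identify $\grf_\grl^{\,n-1}\cdot s^{\grl-\mu}\EV{\mu}^*$ inside $A_n(\grl)$. Here $\grf_\grl$ is the highest weight vector of $\EV{\grl}^*\subset A_1(\grl)$, which in the decomposition of $A_1(\grl)$ sits in the top summand (the one with $\mu=\grl$, where $s^{\grl-\mu}=1$); multiplying by it $n-1$ times sends $s^{\grl-\mu}\EV{\mu}^*$ into the summand $s^{n\grl-\nu}\EV{\nu}^*$ of $A_n(\grl)$ with $\nu=\mu+(n-1)\grl$, because the product of a highest weight vector of $\EV{\grl}^*$ with sections in $s^{\grl-\mu}\EV{\mu}^*$ again has the correct leading $T$-weight $\mu+(n-1)\grl$ and, by irreducibility and the decomposition, spans exactly $s^{(n\grl)-(\mu+(n-1)\grl)}\EV{\mu+(n-1)\grl}^*=s^{\grl-\mu}\EV{\mu+(n-1)\grl}^*$. (One should note $n\grl-(\mu+(n-1)\grl)=\grl-\mu$, so the $s$-exponent is the same $\grl-\mu$ independent of $n$.) Substituting this identification into the criterion from the previous paragraph yields exactly the statement: $X_\Sigma$ is normal iff for all dominant $\mu\leq\grl$ there exists $n>0$ with $s^{\grl-\mu}\EV{\mu+(n-1)\grl}^*\subset B_n(\Sigma)$.

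The main obstacle I expect is the second step: proving precisely that $\grf_\grl^{\,n-1}\cdot s^{\grl-\mu}\EV{\mu}^*=s^{\grl-\mu}\EV{\mu+(n-1)\grl}^*$ rather than merely $\subseteq$ the summand it lands in. For the inclusion $\subseteq$ one needs that multiplication by the invariant section $\grf_\grl$ (viewed as $s^0$ times a highest weight vector) does not move sections out of the expected Cartan component, which follows from weight considerations and the fact that $\EV{\mu}^*\cdot\grf_\grl$ generates an irreducible module whose highest weight is forced. For the reverse inclusion (equality) one uses that $\EV{\mu+(n-1)\grl}^*$ is irreducible and generated under $\gog\oplus\gog$ by any nonzero vector; since $\grf_\grl^{\,n-1}$ times the highest weight vector of $s^{\grl-\mu}\EV{\mu}^*$ is a nonzero vector of weight $\mu+(n-1)\grl$ (nonvanishing because $\grf_\grl$ is a nonzerodivisor on $A(\grl)$, the latter being a domain), and because the $\gog\oplus\gog$-action on $\bar A(\grl)$ is compatible with multiplication by $\grf_\grl$ up to the induced derivation, the orbit of this vector spans the whole summand. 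I would then only need to be careful that the Lie-algebra action on $\bar A(\grl)$ (which is not a $G\times G$-module) still suffices to see irreducibility of these pieces after clearing denominators — this is where I would invoke that $X^\circ_\Sigma$ is open in $X_\Sigma$ so that $\gog\oplus\gog$ acts, reducing everything to the genuine $G\times G$-module structure of $A_n(\grl)$.
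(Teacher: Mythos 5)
There is a genuine gap, and it sits exactly at the step you call the ``key computation''. The claimed identity $\grf_\grl^{\,n-1}\cdot s^{\grl-\mu}\EV{\mu}^* = s^{\grl-\mu}\EV{\mu+(n-1)\grl}^*$ is false in both directions. The inclusion $\subseteq$ fails because multiplying a \emph{general} element of $s^{\grl-\mu}\EV{\mu}^*$ by the highest weight vector $\grf_\grl$ does produce components in lower summands $s^{n\grl-\nu}\EV{\nu}^*$ with $\nu<\mu+(n-1)\grl$: already for $G$ of type $\sfA_1$, $\grl=\mu=\omega_1$, $n=2$, the product of the corner matrix entry $\grf_\grl$ with the opposite corner entry of $\EV{\omega_1}^*$ has a nonzero component along $s^{\gra}\EV{0}^*$ (essentially the determinant), so it does not lie in the Cartan summand $\EV{2\omega_1}^*$. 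The equality fails even more drastically: multiplication by $\grf_\grl^{\,n-1}$ is injective (the ring of sections is a domain), so the left-hand side has dimension $(\dim\VV{\mu})^2$, strictly smaller than $\dim \EV{\mu+(n-1)\grl}^*$ once $n\geq 2$; your appeal to irreducibility only shows that the $G\times G$-module \emph{generated by} $\grf_\grl^{\,n-1}\grf_\mu$ is the whole summand, which is a different (and correct) statement. Because of this, the substitution in your last step does not establish the equivalence: the condition ``$\grf_\grl^{\,n-1}\cdot s^{\grl-\mu}\EV{\mu}^*\subset B_n(\Sigma)$ for some $n$'' and the condition in the statement are not termwise identical, and their equivalence is precisely what has to be proved.

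What survives of your argument is the ``only if'' direction: $\grf_\mu\grf_\grl^{\,n-1}$ \emph{is} the highest weight vector of $s^{\grl-\mu}\EV{\mu+(n-1)\grl}^*$, and since $B_n(\Sigma)$ is a $G\times G$-submodule of $A_n(\grl)$ (being a graded piece of the subalgebra generated by the submodule $\EV{\Sigma}^*$), membership of this vector forces the whole summand into $B_n(\Sigma)$; this is how the paper argues necessity. The ``if'' direction needs an idea your proposal does not contain: from $s^{\grl-\mu}\EV{\mu+(n-1)\grl}^*\subset B_n(\Sigma)$ you may conclude $\grf_\mu/\grf_\grl\in\bar B(\Sigma)$, but you cannot conclude $\grf\grf_\grl^{\,n-1}\in B_n(\Sigma)$ for arbitrary $\grf\in s^{\grl-\mu}\EV{\mu}^*$ (that product has components in other summands about which the hypothesis says nothing). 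The paper instead works inside $\bar B(\Sigma)$, which carries a $\gog\oplus\gog$-action since $X_\Sigma^\circ$ is open in $X_\Sigma$, and propagates from the highest weight vector by lowering operators via
\[
\frac{f_\gra(\grf)}{\grf_\grl} \;=\; f_\gra\Bigl(\frac{\grf}{\grf_\grl}\Bigr) + \frac{\grf}{\grf_\grl}\cdot\frac{f_\gra(\grf_\grl)}{\grf_\grl},
\]
using that $f_\gra(\grf_\grl)\in\EV{\grl}^*\subset B_1(\Sigma)$, and only then invokes Kannan's generation of $\bar A(\grl)$ in degree one. You should replace your ``key computation'' by this two-pronged argument (highest-weight-vector plus module structure for necessity; the Leibniz-rule propagation in $\bar B(\Sigma)$ for sufficiency).
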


\begin{proof} Let $\grf_\mu \in s^{\grl-\mu}\EV{\mu}^*$ be a highest weight vector
and suppose that $X_\Sigma$ is normal. Then, by the descriptions of
$\bar A(\grl)$ and $\bar B(\Sigma)$, for every dominant weight $\mu\leq \grl$
there exist $n>0$ and $\varphi\in B_n(\Sigma)$ such that
${\grf}/{\grf_\grl^n} = {\grf_\mu}/{\grf_\grl}$ or equivalently
$\grf=\grf_\mu \grf_\grl^{n-1}\in B_n(\Sigma)$. Since $\grf$ is a
highest weight vector of the module
$s^{\grl-\mu}\EV{\mu+(n-1)\grl}^*$ the claim follows.

Conversely assume that for every dominant weight $\mu\leq \grl$ there exists
$n$ such that
$$s^{\grl-\mu}\EV{\mu+(n-1)\grl}^*\subset B_n(\Sigma);$$
in particular $\grf=\grf_\mu \grf_\grl^{n-1}\in
B_n(\Sigma)$. Let's prove that $\grf/\grf_\grl \in \bar B(\grS)$ for every
$\grf \in s^{\grl-\mu}\EV{\mu}^*$; this implies the thesis since
$\bar A(\grl)$ is generated in degree one. If $\grf=\grf_\mu$ this is
clear. Using the action of the Lie algebra $\gog\oplus\gog$ on $\bar
B(\Sigma)$, let's show that if $\grf/\grf_\grl \in \bar B(\Sigma)$
then $f_\gra(\grf)/\grf_\grl \in \bar B(\Sigma)$: indeed we have
$$ \frac{f_\gra(\grf)}{\grf_\grl} = f_\gra(\frac{\grf}{\grf_\grl}) +
\frac{\grf}{\grf_\grl} \cdot \frac{f_\gra(\grf_\grl)}{\grf_\grl}
$$ and the claim follows since
$f_\gra(\grf_\grl)\in \EV{\grl}^*\subset B_1(\Sigma)$.
\end{proof}

We can describe the set $B_n(\Sigma)$ more explicitly. Indeed, as in
\cite{DC} or in \cite{Ka}, it is possible to identify sections of a line bundle on $M$
with functions on $G$ and use the description of the multiplication of
matrix coefficients. Recall that as a $G\times G$-module we have
$\mk[G]=\bigoplus_{\grl\in \grL^+}\EV{\grl}^* \isocan \bigoplus_{\grl\in
  \grL^+}\VV{\grl}^*\otimes \VV{\grl}$.  More explicitly if $V$ is a
representation of $G$, define $c_V:V^*\otimes V \lra \mk[G]$ as
usual by $c_V(\psi \otimes v)(g)= \langle \psi, gv\rangle$. If we
multiply functions in $\mk[G]$ of this type then we get
$$
c_{V}( \psi \otimes v) \cdot c_{W}(\chi \otimes w) =
c_{V\otimes W} \big((\psi\otimes\chi)\otimes(v\otimes w)\big):
$$
in particular we get that the image of the multiplication $\EV{\grl}^*
\otimes \EV{\mu}^* \lra \mk[G]$ is the sum of all $\EV{\nu}^*$ with
$\VV{\nu}\subset \VV{\grl}\otimes \VV{\mu}$.

As a consequence we obtain the following Lemma:

\begin{lem}[{\cite[Lemma 3.1]{Ka} or \cite{DC}}]\label{lem:coefficientimatriciali}
Let $\nu, \nu'$ be dominant weights, then the image of $\EV{\nu}^*
\otimes \EV{\nu'}^*$ in $\grG(M, \calM_{\nu+\nu'})$ via the
multiplication map is
\[\bigoplus_{\VV{\mu}\subset \VV{\nu}\otimes \VV{\nu'}}\!\!\!\!s^{\nu+\nu'-\mu}\EV{\mu}^*.\]
\end{lem}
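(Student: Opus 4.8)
The plan is to reduce the statement to the multiplication rule for matrix coefficients on $G$ recalled just above, via restriction of sections to the open orbit. Concretely, I would use the fact — which is the substance of \cite{DC} and \cite{Ka} — that pulling back sections along the natural map $G\to G_{\mathrm{ad}}\subset M$ (which has dense image), and fixing suitable trivializations of the pullbacks of the $\calM_\nu$ (these are trivial bundles on $G$, since $\Pic(G)=0$), yields for each $\nu\in\grL$ a $G\times G$-equivariant \emph{injection} $\rho_\nu\colon\grG(M,\calM_\nu)\hookrightarrow\mk[G]$ with two properties: (a) in the De Concini--Procesi decomposition $\grG(M,\calM_\nu)=\bigoplus_{\mu\leq\nu}s^{\nu-\mu}\EV{\mu}^*$, the map $\rho_\nu$ restricts to an isomorphism of $s^{\nu-\mu}\EV{\mu}^*$ onto the isotypic component $\EV{\mu}^*\isocan\VV{\mu}^*\otimes\VV{\mu}$ of $\mk[G]$; and (b) for dominant $\nu,\nu'$ one has $\rho_{\nu+\nu'}(fg)=\rho_\nu(f)\,\rho_{\nu'}(g)$, i.e.\ multiplication of sections corresponds to multiplication of functions. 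Property (b) is essentially immediate once the trivializations are chosen compatibly, because $G_{\mathrm{ad}}$ is dense in $M$ and the invariant sections $s^\nu$ are nowhere vanishing on it; property (a) in the case $\nu=\mu$ is exactly the statement that $\EV{\mu}^*\subset\grG(M,\calM_\mu)$ consists of matrix coefficients.

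Granting this, the computation is short. Since $\rho_{\nu+\nu'}$ is injective, the image of $\EV{\nu}^*\otimes\EV{\nu'}^*$ under the multiplication map into $\grG(M,\calM_{\nu+\nu'})$ is determined by its image under $\rho_{\nu+\nu'}$; by (b) and (a) (applied with $\mu=\nu$ and $\mu=\nu'$) this image equals the image of $\big(\VV{\nu}^*\otimes\VV{\nu}\big)\otimes\big(\VV{\nu'}^*\otimes\VV{\nu'}\big)$ under the product map $\mk[G]\otimes\mk[G]\to\mk[G]$, which by the identity $c_{V}(\psi\otimes v)\cdot c_{W}(\chi\otimes w)=c_{V\otimes W}\big((\psi\otimes\chi)\otimes(v\otimes w)\big)$ equals $c_{\VV{\nu}\otimes\VV{\nu'}}\big((\VV{\nu}\otimes\VV{\nu'})^*\otimes(\VV{\nu}\otimes\VV{\nu'})\big)$. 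Decomposing $\VV{\nu}\otimes\VV{\nu'}=\bigoplus_\mu \VV{\mu}\otimes\mathrm{Hom}_G(\VV{\mu},\VV{\nu}\otimes\VV{\nu'})$ and using that $G$ acts only on the $\VV{\mu}$-factor, this image collapses to $\bigoplus_{\VV{\mu}\subset\VV{\nu}\otimes\VV{\nu'}}\EV{\mu}^*\subset\mk[G]$, each isotypic component occurring exactly once. Finally, $\VV{\mu}\subset\VV{\nu}\otimes\VV{\nu'}$ forces $\mu\leq\nu+\nu'$, so the summands $s^{\nu+\nu'-\mu}\EV{\mu}^*$ do occur in $\grG(M,\calM_{\nu+\nu'})$; pulling back through the isomorphisms of (a) identifies the image of the multiplication map with $\bigoplus_{\VV{\mu}\subset\VV{\nu}\otimes\VV{\nu'}}s^{\nu+\nu'-\mu}\EV{\mu}^*$, as claimed.

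The only real content lies in constructing the maps $\rho_\nu$ and verifying (a) and (b): that each De Concini--Procesi summand $s^{\nu-\mu}\EV{\mu}^*$ restricts to honest matrix coefficients on $G$, and that the $s^\nu$ restrict to nowhere-vanishing functions on $G_{\mathrm{ad}}$, so that the trivializations can be chosen multiplicatively. This uses the explicit geometry of the wonderful compactification and is carried out in \cite{CP}, \cite{DC} and \cite{Ka}; I expect it, rather than the representation-theoretic bookkeeping, to be the step that would have to be spelled out carefully in a fully self-contained argument.
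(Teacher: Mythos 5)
Your argument is correct and is essentially the paper's own proof: the paper likewise pulls sections back along $\pi\colon G\to M$, trivializes the (trivial) line bundles on $G$ so that $\EV{\mu}^*$ is identified with matrix coefficients $c_{\VV{\mu}}$, and concludes from the multiplication rule $c_{V}(\psi\otimes v)\cdot c_{W}(\chi\otimes w)=c_{V\otimes W}\big((\psi\otimes\chi)\otimes(v\otimes w)\big)$ stated just before the lemma. You merely spell out more explicitly the equivariance, injectivity and compatibility-of-trivializations points that the paper leaves to the references \cite{DC} and \cite{Ka}.
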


\begin{proof}
Indeed let $ \pi : G \ra M$ be the map induced by the inclusion
$G_\mathrm{ad}\subset M$.  Then any line bundle on $G$ can be trivialized so
that the image of $\pi^*:\EV{\grl}^*\subset\grG(M,\calM_\nu)\lra\mk[G]$ is the
image of $c_{\VV{\grl}}$ and the claim follows from previous
remarks.
\end{proof}

Together with Lemma~\ref{lem:general-normality}, this gives the following

\begin{prop} \label{prp:normalita}
The variety $X_\Sigma$ is normal if and only if, for every $\mu \in \grL^+$
such that $\mu \leq \grl$, there exist $n>0$
and $\grl_1,\dots,\grl_n\in \Sigma$
such that
\[\VV{\mu + (n-1)\grl} \subset \VV{\grl_1}\otimes \cdots \otimes \VV{\grl_n} .\]

\end{prop}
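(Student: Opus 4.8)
The plan is to combine Lemma~\ref{lem:general-normality} with Lemma~\ref{lem:coefficientimatriciali}, translating the condition on $B_n(\Sigma)$ into a condition on tensor product decompositions. Recall that $B(\Sigma)$ is the subalgebra of $A(\grl)$ generated by $\EV{\Sigma}^* \subset \grG(M,\calM_\grl)$, so $B_n(\Sigma)$ is the image in $\grG(M,\calM_{n\grl})$ of the multiplication map applied to $\EV{\grl_1}^*\otimes\cdots\otimes\EV{\grl_n}^*$ over all choices $\grl_1,\dots,\grl_n\in\Sigma$. (Here one uses that $\grl$ being the maximal element of $\Sigma$ forces $\sum\grl_i \leq n\grl$, so all these products indeed land in $\grG(M,\calM_{n\grl})$ after the appropriate powers of the $s_\gra$ are accounted for; more precisely, $\EV{\grl_i}^*$ sits inside $\grG(M,\calM_\grl)$ already multiplied by $s^{\grl-\grl_i}$, by the De Concini--Procesi decomposition.)

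First I would fix $\mu\in\grL^+$ with $\mu\leq\grl$ and apply Lemma~\ref{lem:general-normality}: $X_\Sigma$ is normal if and only if for each such $\mu$ there is $n>0$ with $s^{\grl-\mu}\EV{\mu+(n-1)\grl}^*\subset B_n(\Sigma)$. Next I would iterate Lemma~\ref{lem:coefficientimatriciali} to describe $B_n(\Sigma)$: multiplying $\EV{\grl_1}^*,\dots,\EV{\grl_n}^*$ and using associativity of the multiplication of sections (equivalently, of matrix coefficients on $G$, via the formula $c_V(\psi\otimes v)\cdot c_W(\chi\otimes w)=c_{V\otimes W}((\psi\otimes\chi)\otimes(v\otimes w))$ displayed above), the image is exactly
\[
B_n(\Sigma)=\sum_{\grl_1,\dots,\grl_n\in\Sigma}\ \bigoplus_{\VV{\mu}\subset\VV{\grl_1}\otimes\cdots\otimes\VV{\grl_n}} s^{\,n\grl-\mu}\,\EV{\mu}^*.
\]
Therefore the containment $s^{\grl-\mu}\EV{\mu+(n-1)\grl}^*\subset B_n(\Sigma)$ holds if and only if the isotypic component $s^{\,n\grl-(\mu+(n-1)\grl)}\EV{\mu+(n-1)\grl}^*=s^{\grl-\mu}\EV{\mu+(n-1)\grl}^*$ appears in the right-hand side, i.e.\ if and only if $\VV{\mu+(n-1)\grl}\subset\VV{\grl_1}\otimes\cdots\otimes\VV{\grl_n}$ for some choice of $\grl_1,\dots,\grl_n\in\Sigma$. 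Since distinct dominant weights give distinct (and $s$-torsion-free) graded pieces in the De Concini--Procesi decomposition, the containment of the whole isotypic summand is equivalent to the containment of a single highest weight vector, which is in turn equivalent to the tensor-product statement. Combining the two equivalences gives exactly the assertion of the proposition.

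The main obstacle I expect is bookkeeping the powers of the sections $s_\gra$ correctly: one must check that under the identification of $\EV{\grl_i}^*$ with a submodule of $\grG(M,\calM_\grl)$ it is really $s^{\grl-\grl_i}\EV{\grl_i}^*$ that sits inside, so that the product of $n$ such submodules lands in $s^{\,n\grl-\mu}\EV{\mu}^*$ with the right exponent $n\grl-\mu$ (and not, say, $\grl-\mu$), and that $n\grl-\mu\in\mN\grD$ so the section $s^{\,n\grl-\mu}$ is defined. This is precisely where the hypothesis that $\grl$ is the maximal element of $\Sigma$ (so $\grl_i\leq\grl$ and $\mu\leq\sum\grl_i\leq n\grl$) enters; once this is in place the rest is a direct translation. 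A minor point to state carefully is that multiplication by a nonzero $s^\nu$ is injective on global sections, so no isotypic component is lost or gained when passing between $\grG(M,\calM_{n\grl-\nu})$ and its image in $\grG(M,\calM_{n\grl})$, which is what lets us read off isotypic components unambiguously.
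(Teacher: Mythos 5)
Your proposal is correct and follows essentially the same route as the paper: the proposition is obtained there precisely by combining Lemma~\ref{lem:general-normality} with the iterated use of Lemma~\ref{lem:coefficientimatriciali} to identify $B_n(\Sigma)$, exactly as you do, with the multiplicity-one decomposition $\grG(M,\calM_{n\grl})=\bigoplus_{\nu\leq n\grl}s^{n\grl-\nu}\EV{\nu}^*$ justifying the passage from containment of submodules to containment in a single tensor product. Your bookkeeping of the powers of the sections $s^\nu$ matches the paper's conventions.
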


\subsection{Remarks on tensor products}\label{ssez:prodottitensore}
By Proposition~\ref{prp:normalita}, in order to establish the normality
(or the non-normality) of $X_\Sigma$, we need some results on tensor
product decomposition.

\begin{lem} \label{lem:riduzionelevi}
Let $\grl, \mu, \nu$ be dominant weights and let $\grD'\subset \grD$ be such that
$\Supp_\grD(\grl + \mu - \nu) \subset \grD'$; let $L \subset G$ be the
standard Levi subgroup associated to $\grD'$. If $\pi \in \grL^+$, denote
by $\VL{\pi}$ the simple $L$-module of highest weight $\pi$. Then
\[
\VV{\nu} \subset \VV{\grl} \otimes \VV{\mu} \iff \VL{\nu} \subset \VL{\grl} \otimes \VL{\mu}.
\]
\end{lem}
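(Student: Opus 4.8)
The plan is to reduce the statement to a computation with weight multiplicities, using the fact that both sides only depend on the "relevant" part of the Dynkin diagram. For the direction ($\Leftarrow$) the argument should be soft: restriction of $G$-modules to $L$ sends $\VV{\grl}$ onto a direct sum of $L$-modules, one of which is $\VL{\grl}$ sitting on top, and similarly for $\mu$; tensoring and extracting the $L$-isotypic component of type $\nu$, one sees that an occurrence of $\VL{\nu}$ in $\VL{\grl}\otimes\VL{\mu}$ forces an occurrence of $\VV{\nu}$ in $\VV{\grl}\otimes\VV{\mu}$, because $\nu$ is already $G$-dominant and the extra weights appearing in the restriction of $\VV{\grl},\VV{\mu}$ cannot produce a higher $L$-highest weight equal to $\nu$ without there being a genuine $G$-summand $\VV{\nu}$. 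Concretely I would compare highest-weight vectors: a highest weight vector for $L$ in $\VV{\grl}\otimes\VV{\mu}$ of weight $\nu$ which is also $G$-dominant is automatically a $G$-highest weight vector, since being killed by $e_\gra$ for $\gra\in\grD'$ plus dominance for the whole of $\grD$ plus the weight condition $\Supp_\grD(\grl+\mu-\nu)\subset\grD'$ controls the remaining raising operators.

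For the direction ($\Rightarrow$), which I expect to be the essential point, I would argue that the multiplicity of $\VV{\nu}$ in $\VV{\grl}\otimes\VV{\mu}$ equals the multiplicity of $\VL{\nu}$ in $\VL{\grl}\otimes\VL{\mu}$ under the support hypothesis. The key observation is that the tensor product multiplicity is a signed count (Brauer/Klimyk or Steinberg's formula) of the form $\sum_{w} (-1)^{\ell(w)} m_{\VV{\grl}}(\text{something})$, and the only weights of $\VV{\grl}$ that can contribute to the coefficient of $\nu$ in $\VV{\grl}\otimes\VV{\mu}$ are those $\xi$ with $\nu-\xi$ a weight of $\VV{\mu}$; writing $\nu=\grl'+\mu'$ with $\grl'$ a weight of $\VV{\grl}$ and $\mu'$ a weight of $\VV{\mu}$ forces $\grl-\grl'$ and $\mu-\mu'$ to be nonnegative integer combinations of simple roots whose total is $\grl+\mu-\nu$, hence supported on $\grD'$. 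So $\grl'$ and $\mu'$ differ from $\grl,\mu$ only along $\grD'$, which means the relevant weight spaces of $\VV{\grl}$ and $\VV{\mu}$ coincide with those of the $L$-submodules $\VL{\grl}$ and $\VL{\mu}$ (here one uses that $\VL{\grl}$ is exactly the $L$-subrepresentation generated by the highest weight line, and its weights are precisely the weights $\grl'$ of $\VV{\grl}$ with $\grl-\grl'\in\mN\grD'$ — this needs a short justification, e.g. via the fact that $\VL{\grl}$ has extremal weights in the $W_L$-orbit of $\grl$ and the convexity of weight polytopes). Likewise the Weyl group elements contributing in Steinberg's formula must fix everything outside $\grD'$, i.e. lie in $W_L$. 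Putting these together the two multiplicity formulas become literally the same sum, proving the equivalence (indeed the equality of multiplicities).

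The main obstacle I anticipate is making precise the claim that "the weights $\grl'$ of $\VV{\grl}$ with $\grl-\grl'$ supported on $\grD'$ are exactly the weights of $\VL{\grl}\subset\VV{\grl}$, with the same multiplicities." One clean way is: $\VL{\grl}$ is by definition $U(\gol)\cdot v_\grl$; its weights lie in $\grl-\mN\grD'$; conversely, if $\xi$ is a weight of $\VV{\grl}$ with $\grl-\xi\in\mN\grD'$, then applying suitable $e_\gra$, $\gra\in\grD'$, one climbs back to $v_\grl$ staying inside $\VL{\grl}$, using that outside of $\VL{\grl}$ the simple roots in $\grD'$ cannot be "added back" — this is a standard fact about parabolic Verma/Levi decompositions that I would cite or prove in two lines. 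Once this identification of weight spaces (as $T$-modules, hence with multiplicities) is in hand, both implications follow by comparing highest-weight vectors, and the harder numerical direction follows by comparing the two instances of Steinberg's formula term by term. Alternatively, one can avoid Steinberg entirely for ($\Rightarrow$) by the highest-weight-vector comparison as in ($\Leftarrow$) run in reverse: a $G$-highest weight vector of weight $\nu$ in $\VV{\grl}\otimes\VV{\mu}$, when projected to $\VL{\grl}\otimes\VL{\mu}$ along the $L$-decomposition, is nonzero (again by the support condition forcing all its weight components to lie in the relevant weight spaces) and is an $L$-highest weight vector of weight $\nu$; this is the route I would actually write up, as it keeps everything within the representation theory already invoked in the paper.
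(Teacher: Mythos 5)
Your final route --- identifying $\VL{\grl}\subset\VV{\grl}$ with the sum of the weight spaces of $\VV{\grl}$ whose weight differs from $\grl$ by an element of $\mN\grD'$, and then comparing highest weight vectors of weight $\nu$, the support hypothesis guaranteeing both that a $G$-highest weight vector of weight $\nu$ already lies in $\VL{\grl}\otimes\VL{\mu}$ and that the operators $e_\gra$ with $\gra\notin\grD'$ automatically annihilate any vector of weight $\nu$ --- is correct and is essentially the paper's own argument, which expresses the same two points by choosing $p\in\goU(\mathfrak u^-)\otimes\goU(\mathfrak u^-)$ producing the highest weight vector and observing that $p$ may be taken supported on the $f_\gra$ with $\gra\in\grD'$ and hence commutes with the remaining $e_\gra$. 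The Steinberg/Brauer--Klimyk detour you sketch for one direction is unnecessary (and its termwise claim that only elements of $W_L$ and weights dropping along $\grD'$ contribute is not justified as stated), but since you explicitly choose to write up the highest-weight-vector comparison, the proof stands as proposed.
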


\begin{proof}
If $\mathfrak a$ is any Lie algebra, denote $\goU(\mathfrak a)$
the corresponding universal enveloping algebra.

Suppose that $\VL{\nu} \subset \VL{\grl} \otimes \VL{\mu}$; fix
maximal vectors $v_\grl \in \VL{\grl}$ and $v_\mu \in \VL{\mu}$ for
the Borel subgroup $B\cap L \subset L$ and fix $p \in \goU(\mathfrak
l\cap\mathfrak u^-) \otimes \goU(\mathfrak l\cap\mathfrak u^-)$ such
that $p\,(v_\grl \otimes v_\mu) \in \VL{\grl} \otimes \VL{\mu}$ is a
maximal vector of weight $\nu$. Since $\VL{\grl} \otimes \VL{\mu}
\subset \VV{\grl} \otimes \VV{\mu}$, we only need to prove that
$p\,(v_\grl \otimes v_\mu)$ is a maximal vector for $B$ too. If $\gra
\in \grD'$ then we have $e_\gra p\,(v_\grl \otimes v_\mu) = 0$ by
hypothesis. On the other hand, if $\gra \in \grD \senza \grD'$, notice that
$e_\gra$ commutes with $p$, since by its definition $p$ is supported
only on the $f_\gra$'s with $\gra \in \grD'$. Since $v_\grl \otimes
v_\mu$ is a maximal vector for $B$, then we get
\[
    e_\gra p\,(v_\grl \otimes v_\mu) = p\, e_\gra (v_\grl \otimes v_\mu) = 0;
\]
thus $p\,(v_\grl \otimes v_\mu)$ generates a simple $G$-module of highest weight $\nu$.

Assume conversely that $\VV{\nu} \subset \VV{\grl} \otimes \VV{\mu}$ and fix $p
\in \goU(\mathfrak u^-) \otimes \goU(\mathfrak u^-)$ such that
$p\,(v_\grl \otimes v_\mu) \in \VV{\grl} \otimes \VV{\mu}$ is a maximal
vector of weight $\nu$. Since $\Supp_\grD(\grl + \mu - \nu) \subset \grD'$,
we may assume that the only $f_\gra$'s appearing in $p$ are those with
$\gra \in \grD'$; therefore $p\,(v_\grl \otimes v_\mu) \in \VL{\grl}
\otimes \VL{\mu}$ and it generates a simple $L$-module of highest
weight $\nu$.
\end{proof}

\begin{lem} \label{lem:traslazione}
Fix $\grl, \mu, \nu \in \grL^+$ such that $\VV{\nu} \subset \VV{\grl}
\otimes \VV{\mu}$. Then, for any $\nu' \in \grL^+$, it also holds
\[
\VV{\nu + \nu'} \subset \VV{\grl + \nu'} \otimes \VV{\mu}.
\]
\end{lem}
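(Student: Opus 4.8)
The plan is to produce explicitly a highest weight vector of weight $\nu+\nu'$ in $\VV{\grl+\nu'}\otimes\VV{\mu}$; its mere existence yields $\VV{\nu+\nu'}\subset\VV{\grl+\nu'}\otimes\VV{\mu}$, since a nonzero highest weight vector of a dominant weight $\pi$ in a rational $G$-module generates a submodule isomorphic to $\VV{\pi}$. Fix highest weight vectors $v_\grl\in\VV{\grl}$, $v_\mu\in\VV{\mu}$, $v_{\nu'}\in\VV{\nu'}$, and choose a highest weight vector $w\in\VV{\grl}\otimes\VV{\mu}$ of weight $\nu$, written $w=\sum_j x_j\otimes y_j$ with the $y_j\in\VV{\mu}$ linearly independent weight vectors and $x_j\in\VV{\grl}$.

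First I would pass to a triple tensor product and set $W_0=\sum_j x_j\otimes v_{\nu'}\otimes y_j\in\VV{\grl}\otimes\VV{\nu'}\otimes\VV{\mu}$, obtained from $w$ by ``inserting $v_{\nu'}$''. The linear map $\Phi\colon\VV{\grl}\otimes\VV{\mu}\to\VV{\grl}\otimes\VV{\nu'}\otimes\VV{\mu}$, $u\otimes y\mapsto u\otimes v_{\nu'}\otimes y$, is injective (it is the inclusion into the $v_{\nu'}$-summand once a weight basis of $\VV{\nu'}$ through $v_{\nu'}$ is chosen) and satisfies $\Phi(e_\gra w)=e_\gra W_0$ for every $\gra\in\grD$, because $e_\gra v_{\nu'}=0$. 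Hence $W_0=\Phi(w)\neq0$ and $e_\gra W_0=\Phi(e_\gra w)=0$, so $W_0$ is a highest weight vector of weight $\nu+\nu'$. Then I would push it forward along $\sigma\otimes\mathrm{id}_{\VV{\mu}}$, where $\sigma\colon\VV{\grl}\otimes\VV{\nu'}\to\VV{\grl+\nu'}$ is the $G$-equivariant projection onto the Cartan component (the unique submodule of $\VV{\grl}\otimes\VV{\nu'}$ isomorphic to $\VV{\grl+\nu'}$, generated by $v_\grl\otimes v_{\nu'}$), normalized so that $\sigma(v_\grl\otimes v_{\nu'})=v_{\grl+\nu'}$. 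Being the image of a highest weight vector under a $G$-morphism, $(\sigma\otimes\mathrm{id})(W_0)=\sum_j\sigma(x_j\otimes v_{\nu'})\otimes y_j$ is a highest weight vector of weight $\nu+\nu'$ or zero; the whole point is that it is nonzero.

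For that I would prove that $\theta\colon\VV{\grl}\to\VV{\grl+\nu'}$, $x\mapsto\sigma(x\otimes v_{\nu'})$, is injective. Its kernel is $T$-stable (one has $\theta(tx)=\nu'(t)^{-1}\,t\,\theta(x)$), hence spanned by weight vectors; if $x\neq0$ were a weight vector of maximal weight in $\ker\theta$, its weight would not be $\grl$ (since $\theta(v_\grl)=v_{\grl+\nu'}\neq0$), so $x$ would not be a highest weight vector of the simple module $\VV{\grl}$ and $e_\gra x\neq0$ for some $\gra\in\grD$; but $\theta(e_\gra x)=\sigma\big(e_\gra(x\otimes v_{\nu'})\big)=e_\gra\theta(x)=0$ (again using $e_\gra v_{\nu'}=0$), contradicting maximality of the weight. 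Granting the injectivity of $\theta$: as $w\neq0$ and the $y_j$ are linearly independent, some $x_j\neq0$, so $\theta(x_j)\neq0$, and linear independence of the $y_j$ forces $\sum_j\theta(x_j)\otimes y_j\neq0$. Thus $\VV{\grl+\nu'}\otimes\VV{\mu}$ contains a nonzero highest weight vector of weight $\nu+\nu'$, which completes the proof.

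The only step that is not bookkeeping is the injectivity of $\theta$, and that is exactly where the dominance of $\nu'$ is used (through $e_\gra v_{\nu'}=0$ and through $\VV{\grl+\nu'}$ genuinely occurring as the Cartan component of $\VV{\grl}\otimes\VV{\nu'}$). One can equivalently run the argument on the dual side, using lowest weight vectors and the surjection $\VV{\grl}^*\otimes\VV{\mu}^*\twoheadrightarrow\VV{\nu}^*$.
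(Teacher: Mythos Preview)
Your argument is correct and follows the same overall strategy as the paper: both produce a highest weight vector of weight $\nu+\nu'$ in $\VV{\grl+\nu'}\otimes\VV{\mu}$ by applying to a highest weight vector $w$ of weight $\nu$ the map $\theta\otimes\mathrm{id}_{\VV{\mu}}$, where $\theta\colon\VV{\grl}\to\VV{\grl+\nu'}$ sends $x$ to ``$x\cdot v_{\nu'}$''. The only nontrivial point is the nonvanishing of the image, equivalently the injectivity of $\theta$.

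The difference lies in how this injectivity is obtained. The paper uses the Borel--Weil realization already set up in the preliminaries: $\VV{\grl}$, $\VV{\nu'}$, $\VV{\grl+\nu'}$ are spaces of sections of line bundles on $G/B$, and $\theta=m_{\grl,\nu'}(\,\cdot\,,v_{\nu'})$ is multiplication by the fixed nonzero section $v_{\nu'}$, which is injective because $G/B$ is irreducible (this is left implicit in the paper's one-line proof). You instead take $\theta(x)=\grs(x\otimes v_{\nu'})$ with $\grs$ the projection onto the Cartan component---the same map up to scalar---and prove injectivity by a self-contained weight-maximality argument using $e_\gra v_{\nu'}=0$. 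Your route is longer but avoids any appeal to the geometry of $G/B$ and works purely inside representation theory; the paper's route is shorter because the section-multiplication machinery is already in place.
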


\begin{proof}
Fix a maximal vector $v_{\nu'} \in \VV{\nu'}$ and consider the
$U$-equivariant map
\[
\begin{array}{cccc}
\phi: & \VV{\grl} \otimes \VV{\mu} & \lra & \VV{\grl + \nu'}\otimes \VV{\mu}\\
& w_1 \otimes w_2 & \longmapsto & m_{\grl,\nu'}(w_1,v_{\nu'}) \otimes w_2
\end{array}
\]
The claim follows since, if $v_\nu \in \VV{\grl}\otimes \VV{\mu}$ is
a $U$-invariant vector of weight $\nu$, then $\phi(v_\nu) \in \VV{\grl +
\nu'} \otimes \VV{\mu}$ is a $U$-invariant vector of weight $\nu +
\nu'$.
\end{proof}

We now describe some more explicit results. When we deal
with explicit irreducible root systems, unless otherwise stated,
we always use the numbering of simple
roots and fundamental weights of Bourbaki \cite{Bo}.

In order to describe the simple subsets $\grS\subset \grL^+$ which give rise
to a non-normal variety $X_\grS$, we will make use of following lemma.

\begin{lem}\label{lem:BGno}\hfill

\begin{enumerate}
\item Let $G$ be of type ${\sf B}_r$. Then, for any $n$,
  $\VV{(n-1)\gro_1} \not \subset \VV{\gro_1}^{\otimes n}$.
\item Let $G$ be of type ${\sf G}_2$. Then, for any $n$,
  $\VV{\gro_1+(n-1)\gro_2} \not \subset \VV{\gro_2}^{\otimes n}$.
\end{enumerate}
\end{lem}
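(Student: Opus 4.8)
The plan is to attack both statements by the same strategy: translate the non-containment $V(\xi) \not\subset V(\gro_i)^{\otimes n}$ into a statement about weights. Since a necessary condition for $V(\xi) \subset V(\gro_i)^{\otimes n}$ is that $\xi$ is a weight of $V(\gro_i)^{\otimes n}$, i.e. $\xi = \eta_1 + \cdots + \eta_n$ with each $\eta_j \in \Pi(\gro_i)$, I would try to rule this out already at the level of weights (or, failing that, at the level of the full tensor decomposition). The key will be to find a suitable linear functional $\ell$ on the weight lattice, or a congruence condition, that every weight $\eta$ of $V(\gro_i)$ satisfies, but that the target weight violates after summing $n$ of them.

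For part (1), $G$ of type $\sfB_r$: here $V(\gro_1)$ is the standard $(2r+1)$-dimensional orthogonal representation, whose weights are $0$ and $\pm\varepsilon_k$ for $k=1,\dots,r$ (in the standard coordinates), so every weight $\eta$ of $V(\gro_1)$ satisfies $2\eta \in \mZ\grD$; equivalently $\eta$ lies in the root lattice (since for $\sfB_r$ the root lattice equals $\mZ\varepsilon_1 \oplus \cdots \oplus \mZ\varepsilon_r$ and already contains $\gro_1 = \varepsilon_1$). Then any sum $\eta_1 + \cdots + \eta_n$ lies in the root lattice, while $(n-1)\gro_1 = (n-1)\varepsilon_1$ also lies in the root lattice — so a crude lattice argument is not enough. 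Instead I would use the norm: with the standard $W$-invariant form, every nonzero weight of $V(\gro_1)$ has squared length $1$ and they are mutually orthogonal or antipodal, so $\|\eta_1 + \cdots + \eta_n\|^2 \le n$ with equality only in degenerate configurations; comparing with $\|(n-1)\gro_1\|^2 = (n-1)^2$ shows $(n-1)\gro_1$ is not even a weight of $V(\gro_1)^{\otimes n}$ as soon as $(n-1)^2 > n$, i.e. $n \ge 3$. The cases $n=1,2$ must be checked separately: for $n=1$ the claim is $V(0)\not\subset V(\gro_1)$, clear; for $n=2$ the claim is $V(\gro_1) \not\subset V(\gro_1)^{\otimes 2}$, which follows since $V(\gro_1)^{\otimes 2} = V(2\gro_1) \oplus V(\gro_2) \oplus V(0)$ has no $V(\gro_1)$ summand (the nonzero weights of the constituents are $\pm 2\varepsilon_k$, $\pm\varepsilon_k \pm \varepsilon_l$, and $0$, none equal to $\pm\varepsilon_k$). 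Actually the $n=2$ weight count already shows $\gro_1$ is not a weight of $V(\gro_1)^{\otimes 2}$ (a sum of two weights each of squared length $0$ or $1$ and pairwise orthogonal cannot have squared length $1$ unless one summand is $0$, but then it equals the other, again of length $0$ or $1$ — and if it has length $1$ the other is $0$, giving $\eta$, fine, wait: $\eta + 0 = \eta$ is allowed), so I would instead phrase part (1) uniformly via: a weight of $V(\gro_1)^{\otimes n}$ congruent to $(n-1)\varepsilon_1$ of the right length forces all $\eta_j = \pm\varepsilon_1$, and then the signs cannot sum to $n-1$ unless... — I expect this elementary bookkeeping to be the only delicate point, and it is genuinely finite.

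For part (2), $G$ of type $\sfG_2$: $V(\gro_2)$ is the $7$-dimensional representation, with weights the six short roots and $0$; $\gro_1$ is the short dominant root (highest short root) and $\gro_2$ is the highest root (long). I would again argue by length: with a form normalized so short roots have squared length $2$ and long roots squared length $6$, every weight of $V(\gro_2)^{\otimes n}$ is a sum of $n$ vectors of squared length $0$ or $2$, so has squared length at most $2n$. The target weight is $\gro_1 + (n-1)\gro_2$; computing $\|\gro_1 + (n-1)\gro_2\|^2$ using $\|\gro_1\|^2$, $\|\gro_2\|^2$ and $\langle \gro_1,\gro_2\rangle$ (standard $\sfG_2$ data), I expect a quadratic in $n$ with leading term $6(n-1)^2$, which beats $2n$ for $n \ge 2$; the case $n=1$ is $V(\gro_1) \not\subset V(\gro_2)$, immediate since $\gro_1 \ne \gro_2$ and $\dim V(\gro_1) = 14 \ne 7$. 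So part (2) reduces to one elementary inequality plus a trivial base case.

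The main obstacle is, as usual with length arguments, the small-$n$ boundary cases where the quadratic inequality is not yet strict; I would handle those ($n=1,2$ in type $\sfB_r$, $n=1$ in type $\sfG_2$, and possibly $n=2$ in type $\sfG_2$ if the inequality is not strict there) by hand using the explicit Clebsch–Gordan decompositions $V(\gro_1)^{\otimes 2}$ in type $\sfB$ and $V(\gro_2)^{\otimes 2}$ in type $\sfG_2$, or by the direct weight-multiplicity check. I do not foresee any conceptual difficulty beyond this finite verification.
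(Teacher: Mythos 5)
Your strategy cannot work, because the statement is not a statement about weights: the target weight genuinely \emph{is} a weight of the tensor power, and the whole content of the lemma is that it nevertheless supports no highest weight vector. In type $\sfB_r$, write $v_1$ for a highest weight vector and $v_0$ for the zero-weight vector of $\VV{\gro_1}$; then $v_1^{\otimes i-1}\otimes v_0\otimes v_1^{\otimes n-i}$, $1\leq i\leq n$, all have weight $(n-1)\gro_1$, so $(n-1)\gro_1$ occurs in $\VV{\gro_1}^{\otimes n}$ with multiplicity $n$ (this is exactly the loophole you notice yourself in the parenthesis ``$\eta+0=\eta$ is allowed'', but it is not a boundary nuisance to be fixed by bookkeeping --- it kills the approach for every $n\geq 2$). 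Likewise in type $\sfG_2$ (where, in the Bourbaki numbering the paper uses, $\VV{\gro_2}$ is the $14$-dimensional adjoint representation, not the $7$-dimensional one as you assert): $\gro_1$ is the highest short root, hence a weight of $\VV{\gro_2}$, so $\gro_1+(n-1)\gro_2$ is a sum of $n$ weights of $\VV{\gro_2}$. Since the dominant weight in question does occur in the tensor power, no argument that tries to exclude it as a weight --- by lattices, congruences, or norms --- can succeed; one must show the absence of a \emph{maximal} vector of that weight, which is what the paper does: it spans the weight space by the vectors above and checks that $e_{\gra_r}$ maps them to linearly independent vectors, so no nonzero vector of that weight is annihilated by all raising operators.

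There is also a quantitative error propping up the strategy: the inequality $\lVert\eta_1+\cdots+\eta_n\rVert^2\leq n$ (resp.\ $\leq 2n$ in the $\sfG_2$ case) is false, since the $\eta_j$ need not be distinct or orthogonal --- taking all $\eta_j=\varepsilon_1$ gives squared length $n^2$; the triangle inequality only yields $n^2\max_j\lVert\eta_j\rVert^2$, which is useless here. Even your hand-check at $n=2$ in type $\sfB$ is off: $\varepsilon_1=\varepsilon_1+0$ is a weight of $V(2\gro_1)\subset\VV{\gro_1}^{\otimes 2}$, so the constituents do have $\pm\varepsilon_k$ among their weights; the correct reason $\VV{\gro_1}$ is absent from $\VV{\gro_1}^{\otimes2}$ is that the highest weights of the three irreducible summands are not $\gro_1$, not that $\gro_1$ fails to be a weight. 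In short, the proposal needs to be replaced by an argument at the level of the module structure (raising operators on the relevant weight space, or an explicit plethysm/character computation), not at the level of the weight support.
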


\begin{proof}
We consider only the first case, the second is similar. Fix a highest
weight vector $v_1 \in \VV{\gro_1}$. If $\gra$ is any simple root and
if $1 \leq s \leq r$, notice that $f_\gra$ acts non-trivially on
$f_{\gra_{s-1}}\cdots f_{\gra_1}v_1$ if and only if $\gra =
\gra_s$. The $T$-eigenspace of weight 0 in $\VV{\gro_1}$
is spanned by $v_0=f_{\gra_r}\cdots f_{\gra_1}v_1$, and similarly the
$T$-eigenspace of weight $(n-1)\gro_1$ in $\VV{\gro_1}^{\otimes n}$ is
spanned by $v_1^{\otimes i-1} \otimes v_0 \otimes v_1^{\otimes n-i}$,
where $1 \leq i \leq n$. Since the vectors
\[
    e_{\gra_r} (v_1^{\otimes i-1} \otimes v_0 \otimes v_1^{\otimes
      n-i}) = v_1^{\otimes i-1} \otimes (e_{\gra_r} v_0) \otimes
    v_1^{\otimes n-i}
\]
are linearly independent, there exists no maximal vector of
weight $(n-1)\gro_1$ in $\VV{\gro_1}^{\otimes n}$.
\end{proof}

Dual results will be needed to describe the subsets $\grS$ which
give rise to a normal variety $X_\grS$,
but before we need to introduce some further notation.

If $\Phi$ is an irreducible root system and $\grD$ is a basis for $\Phi$ we will denote by $\eta$ the highest root if $\Phi$
is simply laced or the highest short root if $\Phi$ is not simply
laced. For the convenience of the reader we list the highest short
root of every irreducible root system in Table~\ref{tab:hsr}.

\begin{table}\caption{}\label{tab:hsr}
\begin{center}
\begin{tabular}{c|c}
type of $\Phi$ & highest short root \\ \hline $\sfA_r$ &
$\gra_1+\cdots+\gra_r=\omega_1+\omega_r$\\ $\sfB_r$ &
$\gra_1+\cdots+\gra_r=\omega_1$\\ $\sfC_r$ &
$\gra_1+2(\gra_2+\cdots+\gra_{r-1})+\gra_r=\omega_2$\\ $\sfD_r$ &
$\gra_1+2(\gra_2+\cdots+\gra_{r-2})+\gra_{r-1}+\gra_r=\omega_2$\\ $\sfE_6$
& $\gra_1+2\gra_2+2\gra_3+3\gra_4+2\gra_5+\gra_6=\omega_2$\\ $\sfE_7$ &
$2\gra_1+2\gra_2+3\gra_3+4\gra_4+3\gra_5+2\gra_6+\gra_7=\omega_1$\\ $\sfE_8$ &
$2\gra_1+3\gra_2+4\gra_3+6\gra_4+5\gra_5+4\gra_6+3\gra_7+2\gra_8=\omega_8$\\ $\sfF_4$
& $\gra_1+2\gra_2+3\gra_3+2\gra_4=\omega_4$\\ $\sfG_2$ &
$2\gra_1+\gra_2=\omega_1$\\
\hline
\end{tabular}
\end{center}
\end{table}

Recall the condition $(\star)$ defined in the introduction: a dominant weight
$\grl$ satisfies $(\star)$ if, for every non-simply laced connected component $\grD'\subset \grD$,
if $\Supp(\grl)\cap \grD'$ contains a long root then it
contains also the short root which is adjacent to a
long simple root.

\begin{dfn}\label{twin}
If $\grD' \subset \grD$ is a non-simply laced connected component,
order the simple roots in $\grD'= \{ \gra_1, \ldots,
\gra_r\}$ starting from the
extreme of the Dynkin diagram of $\grD'$ which contains a long root and denote $\gra_q$ the first short root in $\grD'$. If $\grl$ is a dominant weight such that $\gra_q\not \in \Supp(\grl)$ and such that $\Supp(\grl)\cap \grD'$ contains a long root, denote $\gra_p$ the last long root which occurs in $\Supp(\grl)\cap \grD'$; for instance, if $\grD'$ is not of type $\sfG_2$, then the numbering is as follows:
\[
\begin{picture}(9000,1800)(2000,-900)
           \put(0,0){\multiput(0,0)(3600,0){2}{\circle*{150}}\thicklines\multiput(0,0)(2500,0){2}{\line(1,0){1100}}\multiput(1300,0)(400,0){3}{\line(1,0){200}}}
           \put(3600,0){\multiput(0,0)(3600,0){2}{\circle*{150}}\thicklines\multiput(0,0)(2500,0){2}{\line(1,0){1100}}\multiput(1300,0)(400,0){3}{\line(1,0){200}}}
           \put(7200,0){\multiput(0,0)(1800,0){2}{\circle*{150}}\thicklines\multiput(0,-60)(0,150){2}{\line(1,0){1800}}\multiput(1050,0)(-25,25){10}{\circle*{50}}\multiput(1050,0)(-25,-25){10}{\circle*{50}}}
           \put(9000,0){\multiput(0,0)(3600,0){2}{\circle*{150}}\thicklines\multiput(0,0)(2500,0){2}{\line(1,0){1100}}\multiput(1300,0)(400,0){3}{\line(1,0){200}}}
           \put(-150,-700){\tiny $\alpha_1$}
           \put(3450,-700){\tiny $\alpha_p$}
           \put(8850,-700){\tiny $\alpha_q$}
           \put(12450,-700){\tiny $\alpha_r$}
\end{picture}
\]
The \textit{little brother} of $\grl$ with respect to $\grD'$ is the dominant weight
\[
\grl_{\grD'}^\yb = \grl - \sum_{i=p}^q \gra_i =
\left\{ \begin{array}{ll}
		\grl-\omega_1+\omega_2 & \textrm{ if $G$ is of type $\sf{G}_2$} \\
		\grl + \gro_{p-1} - \gro_{p}  + \gro_{q+1} & \textrm{ otherwise}
\end{array} \right.
\]
where $\gro_i$ is the fundamental weight associated to $\gra_i$ if $1\leq i \leq r$, while $\gro_0 = \gro_{r+1} = 0$.
The set of the little brothers of
$\grl$ will be denoted by $\YB(\grl)$; notice that $\YB(\grl)$ is empty
if and only if $\grl$ satisfies condition $(\star)$ of Theorem A.
For convenience, define
$\overline \YB(\grl)=\YB(\grl)\cup\{\grl\}$, while if $\grD$ is connected and non-simply laced set $\grl^\yb = \grl_\grD^\yb$.
\end{dfn}

\begin{lem} \label{lem:eta}
Assume $G$ to be simple and let $\grl\in \grL^+\senza\{0\}$.
Denote $\eta$ the highest root of $\Phi$ if the latter is simply laced or the highest short root otherwise.
\begin{enumerate}
\item If $\grl$ satisfies the condition $(\star)$ then
$$ \VV{\grl} \subset \VV{\eta} \otimes \VV{\grl}. $$
\item If $\grl$ does not satisfy the condition $(\star)$ and if
$\grl^\yb$ is the little brother of $\grl$ then $$ \VV{\grl} \subset \VV{\eta} \otimes \VV{\grl^\yb}. $$
\end{enumerate}
\end{lem}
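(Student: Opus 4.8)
The plan is to reduce both statements to a calculation inside the adjoint representation, using the combinatorial tools already at hand: Lemma~\ref{lem:riduzionelevi} to pass to a Levi subgroup, Lemma~\ref{lem:traslazione} to translate by a dominant weight, and Proposition~\ref{prp:supporto} (via Lemma~\ref{lem:immersioni}) to reduce $\grl$ to a sum of fundamental weights. For part (1), the point is that $\VV{\eta}$ is the adjoint representation (or the ``small adjoint'' in the non-simply-laced case), so $\eta$ is the highest weight that can be subtracted: we must produce a $U$-invariant vector of weight $\grl$ in $\VV{\eta}\otimes\VV{\grl}$, equivalently (by Lemma~\ref{lem:traslazione}, taking $\nu'=\grl-\gro_\gra$ for a suitable $\gra\in\Supp(\grl)$) it suffices to find, for each $\gra\in\Supp(\grl)$ compatible with $(\star)$, a copy of $\VV{\gro_\gra}$ inside $\VV{\eta}\otimes\VV{\gro_\gra}$. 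Here $\grl+\mu-\nu=\eta$ has support contained in the connected component $\grD'$ of $\gra$, so Lemma~\ref{lem:riduzionelevi} lets us assume $G$ simple; then one checks case by case, using the explicit expression of $\eta$ in Table~\ref{tab:hsr}, that $\VV{\gro_\gra}\subset\VV{\eta}\otimes\VV{\gro_\gra}$ precisely when $\gra$ is not a long simple root sitting ``beyond'' the first short root — i.e.\ precisely when $(\star)$ does not force $\gro_\gra$ to be replaced by its little brother. The cleanest way to run the case check is to note $\VV{\gro_\gra}\subset\VV{\eta}\otimes\VV{\gro_\gra}$ iff $\gro_\gra$ is a weight of $\VV{\eta}\otimes\VV{\gro_\gra^*}\otimes\cdots$; more practically, $\VV{\eta}\otimes\VV{\gro_\gra}\supset\VV{\gro_\gra}$ iff $0$ occurs in $\VV{\eta}\otimes(\VV{\gro_\gra}\otimes\VV{\gro_\gra}^*)$, i.e.\ iff $-\eta$ is a weight of $\End(\VV{\gro_\gra})$ with the correct multiplicity in the relevant isotypic piece — one verifies this by exhibiting an explicit lowering operator $p\in\goU(\gou^-)\otimes\goU(\gou^-)$ killed by all $e_\grb$, exactly as in the proof of Lemma~\ref{lem:immersioni}.

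For part (2), $\grl$ fails $(\star)$, so there is a non-simply-laced component $\grD'$ with a long root in $\Supp(\grl)$ but with $\gra_q\notin\Supp(\grl)$, and $\grl^\yb=\grl-\sum_{i=p}^q\gra_i$ is its little brother. Again by Lemma~\ref{lem:riduzionelevi} (since $\grl-\grl^\yb=\sum_{i=p}^q\gra_i$ is supported in $\grD'$) we may assume $G$ simple and non-simply laced, and by Lemma~\ref{lem:traslazione} we may subtract off the part of $\grl$ supported away from the ``relevant'' roots, reducing to the case $\Supp(\grl)\subseteq\{\gra_p,\dots,\gra_q\}$ or even to $\grl=\gro_{\gra_p}$ together with whatever long roots lie between; concretely it suffices to treat the generators of type $\sfB_r,\sfC_r,\sfF_4,\sfG_2$ with $\grl$ one of the fundamental weights whose support is a long root past which no short root is supported. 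Then $\grl^\yb$ is, by the formula in Definition~\ref{twin}, $\grl+\gro_{p-1}-\gro_p+\gro_{q+1}$ (or $\grl-\gro_1+\gro_2$ in type $\sfG_2$), and one must exhibit a $U$-invariant vector of weight $\grl$ in $\VV{\eta}\otimes\VV{\grl^\yb}$: equivalently $\grl-\grl^\yb=\sum_{i=p}^q\gra_i$ must be a weight of $\VV{\eta}$, and the corresponding weight space must actually carry a maximal vector after tensoring. The key observation is that $\sum_{i=p}^q\gra_i$ \emph{is} a weight of the (short) adjoint representation $\VV{\eta}$ — indeed in $\sfB,\sfC,\sfF$ all short roots are $\sfW$-conjugate and such a ``tail sum'' $\gra_p+\cdots+\gra_q$ is a short root when $\gra_q$ is the first short node — so $\VV{\eta}$ contains a vector of the right weight, and a short computation (choosing $p=f_{\gra_q}\cdots f_{\gra_p}\otimes 1$ applied to $v_\eta\otimes v_{\grl^\yb}$, then correcting by lower terms) shows it generates $\VV{\grl}$.

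The main obstacle is the explicit case-by-case verification underlying both parts: one genuinely has to check, for each non-simply-laced type and each relevant fundamental weight, that the claimed $U$-invariant vector exists (for part 1, when $(\star)$ holds) and does \emph{not} exist in $\VV{\eta}\otimes\VV{\grl}$ but does in $\VV{\eta}\otimes\VV{\grl^\yb}$ (for part 2). The Levi reduction of Lemma~\ref{lem:riduzionelevi} and the translation Lemma~\ref{lem:traslazione} cut this down to a finite list of rank-bounded computations (essentially: $\sfB_r$ with $\grl\in\{\gro_1,\dots,\gro_{r-1}\}$, $\sfC_r$ with $\grl\in\{\gro_1,\dots,\gro_r\}$, and the low-rank exceptionals $\sfF_4,\sfG_2$), and for the negative assertion the obstruction is exactly the linear-independence-of-$e_{\gra_r}$-images phenomenon already exploited in Lemma~\ref{lem:BGno}; for the positive assertions one writes down the lowering operator and checks it is annihilated by every $e_\grb$, $\grb\in\grD'$, which is the same bookkeeping as in Lemma~\ref{lem:immersioni}. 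I would organize the write-up so that the general reductions are done once, and then the finitely many base cases are dispatched together, pointing to Table~\ref{tab:hsr} for the shape of $\eta$ in each type.
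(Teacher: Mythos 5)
Your plan follows essentially the same route as the paper: treat the simply laced case via the adjoint representation $\VV{\eta}\simeq\gog$, use Lemma~\ref{lem:traslazione} to strip $\grl$ down to a single fundamental weight (a short one for part (1), the last long one $\gro_{\gra_p}$ for part (2), noting that little brothers are compatible with adding a dominant $\nu'$), and then verify a finite list of containments in types $\sfB$, $\sfC$, $\sfF_4$, $\sfG_2$ by exhibiting explicit $U$-invariant vectors; the paper does exactly this, writing out only the type $\sfC_r$ case $\VV{\gro_i}\subset\VV{\gro_2}\otimes\VV{\gro_i}$ as a signed sum over factorizations of a product of $f_\gra$'s and declaring the rest checkable directly. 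Two points to fix in your write-up. First, in part (1) the translation argument requires choosing a \emph{short} simple root $\gra\in\Supp(\grl)$ (which exists precisely because $\grl$ satisfies $(\star)$ and $\grl\neq 0$); ``for each $\gra\in\Supp(\grl)$'' is too strong, since the containment $\VV{\gro_\gra}\subset\VV{\eta}\otimes\VV{\gro_\gra}$ genuinely fails for long $\gra$, as you yourself observe. Second, in part (2) your proposed leading operator $f_{\gra_q}\cdots f_{\gra_p}\otimes 1$ applied to $v_\eta\otimes v_{\grl^\yb}$ has the wrong weight: it produces a vector of weight $\eta+\grl^\yb-\sum_{i=p}^{q}\gra_i$, not $\grl=\grl^\yb+\sum_{i=p}^{q}\gra_i$. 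Consistently with your own (correct) leading-term analysis, the first tensor factor of the leading term must be a vector of $\VV{\eta}$ of weight $\sum_{i=p}^{q}\gra_i$ (a short root, so this weight space is nonzero), i.e.\ one must lower $v_\eta$ by $\eta-\sum_{i=p}^{q}\gra_i$ and then correct by terms with lower first-factor weight; the actual verification that the resulting vector can be completed to a $U$-invariant one is the real content of the finite case check, and is somewhat more delicate than the differential computation of Lemma~\ref{lem:immersioni}, as the paper's worked type $\sfC$ example (with its signs and the factor $2^\delta$) shows.
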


\begin{proof}
If $\grD$ is simply laced, then
$\VV{\eta}\simeq\gog$ is the adjoint representation: in this case the claim follows
straightforward by considering the map $\gog \otimes \VV{\grl} \to \VV{\grl}$
induced by the $\gog$-module structure on $\VV{\grl}$, which is non-zero
since $\grl$ is non-zero.

Suppose now that $\grD$ is not simply laced.
If $\grl$ satisfies condition $(\star)$, then
by Lemma~\ref{lem:traslazione} it is enough to study the case
$\grl = \omega_\gra$ where $\gra$ is a short simple root:

\emph{Type $\sfB_r$}: $\VV{\omega_r}\subset\VV{\omega_1}\otimes \VV{\omega_r}$.

\emph{Type $\sfC_r$}: $\VV{\omega_i}\subset\VV{\omega_2}\otimes \VV{\omega_i}$, with $i<r$.

\emph{Type $\sfF_4$}: $\VV{\omega_3}\subset\VV{\omega_4}\otimes \VV{\omega_3}$ and $\VV{\omega_4}\subset\VV{\omega_4}\otimes \VV{\omega_4}$.

\emph{Type $\sfG_2$}: $\VV{\omega_1}\subset\VV{\omega_1}\otimes \VV{\omega_1}$.

If $\grl$ does not satisfy condition $(\star)$,
by Lemma~\ref{lem:traslazione} we can assume
that $\grl=\gro_\gra$ with $\gra$ a long root:

\emph{Type $\sfB_r$}: $\VV{\omega_i}\subset\VV{\omega_1}\otimes \VV{\omega_{i-1}}$, if $1<i<r$, and $\VV{\omega_1}\subset\VV{\omega_1}\otimes \VV{0}$.

\emph{Type $\sfC_r$}: $\VV{\omega_r}\subset\VV{\omega_2}\otimes \VV{\omega_{r-2}}$.

\emph{Type $\sfF_4$}: $\VV{\omega_1}\subset\VV{\omega_4}\otimes \VV{\omega_4}$ and $\VV{\omega_2}\subset\VV{\omega_4}\otimes \VV{\omega_1+\omega_4}$.

\emph{Type $\sfG_2$}: $\VV{\omega_2}\subset\VV{\omega_1}\otimes \VV{\omega_1}$.

The above mentioned inclusion relations for tensor products are essentially known: let us treat the case of type $\sfC_r$ with $\grl=\omega_i$ and $i<r$, the other cases are easier or can be checked directly.

Let $v_0$ be a highest weight vector of $\VV{\omega_2}$ and $w_0$ be a highest weight vector of $\VV{\omega_i}$.
Let $f$ be the following product (in the universal enveloping algebra $\mathfrak U(\mathfrak u^-)$)
\[f=f_{\gra_i}\cdots f_{\gra_1}\cdot f_{\gra_{i+1}}\cdots f_{\gra_{r-1}}\cdot f_{\gra_r}\cdots f_{\gra_2},\]
and consider all the factorizations $f = p\cdot q$ such that $p,q \in\mathfrak U(\mathfrak u^-)$. If $\grb_1,\ldots,\grb_j\in\grD$, set
$$
	\,^\mathrm r(f_{\grb_1}\cdots f_{\grb_j})=(-1)^j2^\delta f_{\grb_j}\cdots f_{\grb_1},
$$
where $\delta$ equals 0 (resp.\ 1) if $\alpha_i$ occurs an even (resp.\ odd) number of times in $\{\grb_1,\ldots,\grb_j\}$. Then it is easy to check that the vector
\[\sum_{p\cdot q=f} p.v_0 \otimes \,^\mathrm r\!q.w_0\]
is a $U$-invariant vector in $\VV{\omega_2}\otimes\VV{\omega_i}$ of $T$-weight $\omega_i$.
\end{proof}

If the Dynkin diagram of $G$ is not simply laced we will need some
further properties of tensor products.

If $\grD$ is connected but not simply laced,
we will denote by $\alpha_S$
the short simple root that is adjacent to a long simple root $\alpha_L$;
moreover, we will denote the associated
fundamental weights by $\omega_S$ and $\omega_L$.
Finally, define $\zeta$ as the sum of all simple
roots and notice that $\omega_S + \zeta$ is dominant.

\begin{lem}\label{lem:zeta}
Let $\grl$ be a non-zero dominant weight.
\begin{enumerate}
\item If $G$ is of type $\sfF_4$ or $\sfC_r$ ($r\geq 3$) and if $\Supp(\grl)$ contains a long root then
$$\VV{\grl+\omega_S} \subset \VV{\zeta + \omega_S} \otimes \VV{\grl}.$$
\item If $G$ is of type $\sfG_2$ and if $\grl$ does not satisfy $(\star)$
  then $$ \VV{\grl+\omega_1} \subset \VV{\omega_2} \otimes \VV{\grl^\yb}. $$
\item If $G$ is of type $\sfG_2$ and if $\gra_S \in \Supp(\grl)$
  then $$ \VV{\grl+\omega_1} \subset \VV{\omega_2} \otimes \VV{\grl}. $$
\end{enumerate}
\end{lem}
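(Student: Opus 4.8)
The plan is to reduce all three assertions, using Lemma~\ref{lem:traslazione} and the commutativity of the tensor product, to base cases in which $\grl$ is a single fundamental weight, and then to dispose of those base cases. The reduction rests on the following elementary observation: if $\grl=\pi+\grl'$ with $\pi,\grl'\in\grL^+$ and $\VV{\pi+\epsilon}\subset\VV{\tau}\otimes\VV{\pi}$ for some $\tau,\epsilon\in\grL^+$, then $\VV{\grl+\epsilon}\subset\VV{\tau}\otimes\VV{\grl}$. Indeed, rewriting the hypothesis as $\VV{\pi+\epsilon}\subset\VV{\pi}\otimes\VV{\tau}$ and applying Lemma~\ref{lem:traslazione} with $\nu'=\grl'$ gives $\VV{\pi+\epsilon+\grl'}\subset\VV{\pi+\grl'}\otimes\VV{\tau}$, which is the claim since $\pi+\grl'=\grl$.

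\emph{Part (1).} I would pick a long simple root $\gra\in\Supp(\grl)$ and write $\grl=\omega_\gra+\grl'$ with $\grl'\in\grL^+$; by the observation (with $\pi=\omega_\gra$, $\tau=\zeta+\omega_S$, $\epsilon=\omega_S$) it then suffices to prove $\VV{\omega_\gra+\omega_S}\subset\VV{\zeta+\omega_S}\otimes\VV{\omega_\gra}$ for every long simple root $\gra$. In type $\sfC_r$ the only long simple root is $\gra_r$ and $\zeta+\omega_S=\omega_1+\omega_r$, so the unique base case is
\[
\VV{\omega_{r-1}+\omega_r}\subset\VV{\omega_1+\omega_r}\otimes\VV{\omega_r},
\]
whereas in type $\sfF_4$ the long simple roots are $\gra_1,\gra_2$ and $\zeta+\omega_S=\omega_1+\omega_4$, so the base cases are $\VV{\omega_1+\omega_3}\subset\VV{\omega_1+\omega_4}\otimes\VV{\omega_1}$ and $\VV{\omega_2+\omega_3}\subset\VV{\omega_1+\omega_4}\otimes\VV{\omega_2}$. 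The two $\sfF_4$ inclusions concern fixed finite-dimensional modules and I would check them directly. The $\sfC_r$ inclusion has to be proved uniformly in $r$, and here the plan is to imitate the proof of Lemma~\ref{lem:eta} for type $\sfC_r$: choose a suitable monomial $f\in\goU(\mathfrak u^-)$ of $T$-weight $-\zeta$ (so that $f$ involves each $f_{\gra_i}$, $1\le i\le r$, exactly once, in an order to be determined), take highest weight vectors $v_0,w_0$ of the two factors, and show that $\sum_{p\cdot q=f}p.v_0\otimes\,^\mathrm r\!q.w_0$ --- with $\,^\mathrm r(\cdot)$ the sign-and-scalar modified reversal used in that proof --- is a non-zero $U$-invariant vector of weight $\omega_{r-1}+\omega_r$.

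\emph{Parts (2) and (3).} Here $G$ is of type $\sfG_2$, so $\grl=a\omega_1+b\omega_2$ with $a,b\ge 0$, while $\omega_1$ is the highest short root and $\omega_2$ the highest root (so $\VV{\omega_2}$ is the adjoint module). If $\grl$ fails $(\star)$ then $\gra_1\notin\Supp(\grl)$ and $\gra_2\in\Supp(\grl)$, i.e.\ $a=0$ and $b\ge 1$, and $\grl^\yb=\grl-\gra_1-\gra_2=\omega_1+(b-1)\omega_2$ (cf.\ Definition~\ref{twin}); the required inclusion $\VV{\omega_1+b\omega_2}\subset\VV{\omega_2}\otimes\VV{\omega_1+(b-1)\omega_2}$ then follows from the Cartan inclusion $\VV{\omega_1+\omega_2}\subset\VV{\omega_1}\otimes\VV{\omega_2}$ via the observation, with $\pi=\omega_1$, $\grl'=(b-1)\omega_2$, $\tau=\omega_2$, $\epsilon=\omega_2$. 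If instead $\gra_S=\gra_1\in\Supp(\grl)$, I would write $\grl=\omega_1+\grl'$ with $\grl'\in\grL^+$; by the observation (with $\pi=\omega_1$, $\tau=\omega_2$, $\epsilon=\omega_1$) it is then enough to prove $\VV{2\omega_1}\subset\VV{\omega_2}\otimes\VV{\omega_1}$, which one reads off from the decomposition $\VV{\omega_1}\otimes\VV{\omega_2}=\VV{\omega_1+\omega_2}\oplus\VV{2\omega_1}\oplus\VV{\omega_1}$.

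The main obstacle is the $\sfC_r$ base case of part~(1): it is the only base case that is at once non-trivial --- not a Cartan component of the relevant tensor product --- and needed for all ranks simultaneously, so the real work is the combinatorial choice of the monomial $f$ and the verification of the telescoping cancellation when the operators $e_\gra$, $\gra\in\grD$, are applied to $\sum_{p\cdot q=f}p.v_0\otimes\,^\mathrm r\!q.w_0$, together with the non-vanishing of that vector. The $\sfF_4$ inclusions of part~(1) are also not Cartan components, but they concern fixed modules and can be settled by a direct computation, while the base cases of parts~(2) and~(3) are respectively the Cartan component and a single rank-two decomposition.
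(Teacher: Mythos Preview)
Your proposal is correct and follows the same reduction as the paper, using Lemma~\ref{lem:traslazione} to reduce each case to $\grl$ a single fundamental weight. The one simplification you miss is in the $\sfC_r$ base case of part~(1): a further application of Lemma~\ref{lem:traslazione} (adding $\nu'=\omega_r$ to the first tensor factor) reduces $\VV{\omega_{r-1}+\omega_r}\subset\VV{\omega_1+\omega_r}\otimes\VV{\omega_r}$ to the elementary inclusion $\VV{\omega_{r-1}}\subset\VV{\omega_1}\otimes\VV{\omega_r}$, so the monomial construction you flag as the main obstacle is unnecessary.
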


\begin{proof}
By Lemma~\ref{lem:traslazione} it is enough to check the statements for $\grl=\omega_\gra$
with $\gra$ a long root in the first two cases and $\gra=\gra_S$ in the last case.

\emph{Type $\sfC_r$}: by Lemma~\ref{lem:traslazione} it is enough to check that
$\VV{\omega_{r-1}}\subset \VV{\omega_1} \otimes \VV{\omega_r}$.

\emph{Type $\sfF_4$}: we have $\grl=\omega_1$ or $\grl=\omega_2$ and $\omega_S+\zeta=\omega_1+\omega_4$.

\emph{Type $\sfG_2$}: we have $\grl=\omega_2$ and $\grl^\yb=\omega_1$ in point (2) and
$\grl=\omega_1$ in point (3).
\end{proof}

\subsection{Normality and non-normality of $X_\Sigma$}\label{ssez:normalita}

We are now able to state the main theorem.


\begin{teo}\label{teo:normalita}Let $\Sigma$ be a simple set of dominant weights and let
$\grl$ be its maximal element. The variety $X_\Sigma$
 is normal if and only if $\Sigma \supset \YB(\grl)$.
\end{teo}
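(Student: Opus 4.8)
The plan is to reduce everything to the tensor-product criterion of Proposition~\ref{prp:normalita}, which says that $X_\Sigma$ is normal if and only if for every dominant $\mu\leq\grl$ there are $n>0$ and $\grl_1,\dots,\grl_n\in\Sigma$ with $\VV{\mu+(n-1)\grl}\subset\VV{\grl_1}\otimes\cdots\otimes\VV{\grl_n}$. First I would dispose of the necessity of the condition $\Sigma\supset\YB(\grl)$. Suppose some little brother $\grl^\yb_{\grD'}$ is not in $\Sigma$. Using Lemma~\ref{lem:riduzionelevi} one passes to the Levi subgroup $L$ associated to the non-simply laced component $\grD'$; because $\grl-\grl^\yb_{\grD'}$ is supported on $\grD'$, the inclusion $\VV{\grl^\yb_{\grD'}+(n-1)\grl}\subset\bigotimes\VV{\grl_i}$ would force the analogous inclusion over $L$, and after discarding the simply-laced factors of $L$ one is reduced to a simple group of type $\sfB_r$, $\sfC_r$, $\sfF_4$ or $\sfG_2$. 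Since $\grl^\yb_{\grD'}\notin\Sigma$ and $\grl$ is the maximal element, every $\grl_i$ occurring must be $\leq\grl$ but the "long-root" coordinate cannot be compensated, and I expect this to collapse (after a further Levi reduction on the $\sfC$/$\sfF$/$\sfB$ side and using Lemma~\ref{lem:traslazione} to strip off the extra $\gro$'s) to exactly the two forbidden configurations of Lemma~\ref{lem:BGno}: $\VV{(n-1)\gro_1}\not\subset\VV{\gro_1}^{\otimes n}$ in type $\sfB_r$ and $\VV{\gro_1+(n-1)\gro_2}\not\subset\VV{\gro_2}^{\otimes n}$ in type $\sfG_2$. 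Hence $\mu=\grl^\yb_{\grD'}$ violates the criterion and $X_\Sigma$ is not normal.

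For sufficiency, assume $\Sigma\supset\YB(\grl)$ and fix a dominant $\mu\leq\grl$; I must produce $n$ and weights in $\Sigma$ whose tensor product contains $\VV{\mu+(n-1)\grl}$. The idea is to build $\mu+(n-1)\grl$ one "$\eta$-step" at a time. Concretely, write $\grl-\mu\in\mN\grD$ and peel it off using Lemma~\ref{lem:eta}: if at each stage the current weight satisfies $(\star)$ we use part~(1), $\VV{\nu}\subset\VV{\eta}\otimes\VV{\nu}$, combined with Lemma~\ref{lem:traslazione} to re-absorb a copy of $\grl$; if it fails $(\star)$ we instead use part~(2), $\VV{\nu}\subset\VV{\eta}\otimes\VV{\nu^\yb}$, which trades $\nu$ for its little brother — and little brothers of $\grl$ lie in $\Sigma$ by hypothesis, so they are legitimate factors. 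Here $\eta$ is $\leq\grl$ as well (being the highest (short) root, its support is the full connected component, which is contained in $\Supp\grl$ once $\Supp\grl$ meets that component; the components disjoint from $\Supp\grl$ contribute trivial factors, or one treats the group as simple after Proposition~\ref{prp:supporto}). Iterating, after finitely many steps every coordinate of $\grl-\mu$ is exhausted and one is left needing $\VV{k\grl}\subset\VV{\grl}^{\otimes k}$, which holds trivially. Collecting the factors $\eta$, $\grl$ and the various little brothers used, all of which lie in $\overline\YB(\grl)\cup\{\eta\}$ and hence (since $\eta\leq\grl$, so $\VV{\eta}$ is a tensor-factor-available weight — more precisely one replaces $\VV{\eta}$ by a summand of $\VV{\grl}^{\otimes?}$ or handles it directly) are accounted for by $\Sigma$, gives the required decomposition.

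The main obstacle is the sufficiency direction, and within it two points need care. First, the inductive "$\eta$-peeling" must actually terminate and must stay inside the allowed set of tensor factors: each application of Lemma~\ref{lem:eta}(2) replaces a weight by its little brother, and one must check that this strictly decreases some monovariant (e.g. the long-root coordinates of the support, or $\mathrm{ht}(\grl-\nu)$) so the process halts, and that the little brothers produced are genuinely the $\grl^\yb_{\grD'}\in\YB(\grl)$ and not some unrelated weights. Second, the factor $\VV{\eta}$ is not a priori of the form $\VV{\grl_i}$ with $\grl_i\in\Sigma$; one must either observe that $\eta\leq\grl$ and invoke Lemma~\ref{lem:traslazione}/a further tensor step to realize $\VV{\eta}$ inside a product of $\VV{\grl}$'s, or — more cleanly — reorganize the induction so that the statement proved is "$\VV{\mu+(n-1)\grl}\subset\VV{\grl}^{\otimes a}\otimes\bigotimes(\text{little brothers})$", absorbing $\eta$ via $\VV{\nu+\grl}\subset\VV{\eta}\otimes\VV{\nu}\otimes(\cdots)\subset\VV{\grl}\otimes\VV{\nu}\otimes(\cdots)$ whenever $\eta\leq\grl$. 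Handling the type $\sfG_2$ and $\sfC_r$/$\sfF_4$ cases may additionally require the auxiliary inclusions of Lemma~\ref{lem:zeta} rather than bare Lemma~\ref{lem:eta}, since there the single "adjoint-like" step $\VV{\eta}\otimes-$ does not by itself move the long-root coordinate in the needed direction; I would slot those in exactly at the stages where the running weight has a long root in its support.
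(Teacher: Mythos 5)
Your overall strategy is the paper's (reduce to the tensor criterion of Proposition~\ref{prp:normalita}, Levi reduction plus Lemma~\ref{lem:BGno} for necessity, an ``$\eta$-peeling'' induction for sufficiency), but in both directions the key steps are left as expectations, and two of the mechanisms you propose do not work. For necessity: Lemma~\ref{lem:traslazione} only lets you \emph{add} a dominant weight to an existing inclusion; it cannot be used to ``strip off the extra $\gro$'s'', since the converse implication is false, so your collapse of a hypothetical inclusion $\VV{\grl_{\grD'}^\yb+(n-1)\grl}\subset\VV{\grl_1}\otimes\cdots\otimes\VV{\grl_n}$ to the two configurations of Lemma~\ref{lem:BGno} is not justified. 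Moreover, even your Levi reduction needs more than ``$\grl-\grl_{\grD'}^\yb$ is supported on $\grD'$'': to apply Lemma~\ref{lem:riduzionelevi} one must first show $\mu\leq\grl_i\leq\grl$ for every factor, so that $\Supp_\grD\bigl(\sum\grl_i-(\mu+(n-1)\grl)\bigr)\subset\Supp_\grD(\grl-\mu)$. After that reduction (to type $\sfB_r$ with $\Supp(\grl)=\{\gra_1\}$, or $\sfG_2$ with $\Supp(\grl)=\{\gra_2\}$) one still has to enumerate the dominant weights in the interval $[\mu,\grl]$ that can occur as $\grl_i$ and embed their modules into tensor powers of $\VV{\gro_1}$ (e.g.\ via $\VV{\gro_2}\subset\VV{\gro_1}^{\otimes 2}$) before Lemma~\ref{lem:BGno} gives the contradiction; none of this is in your sketch.

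For sufficiency, the two difficulties you flag are exactly the ones your argument does not resolve, and your proposed fix for the second is wrong-headed. (a) You never explain why the ``current weight plus $\eta$'' is dominant and $\leq\grl$; in general it is neither, and this is precisely where Stembridge's Lemma~\ref{lem:stembridge} and the construction of a connected subset $K$ meeting $\Supp(\grl)$ (Lemma~\ref{lem:costruzioneK}) are indispensable: one works with $\eta_K$, passes to the Levi $L_K$ via Lemma~\ref{lem:riduzionelevi}, and uses that little brothers computed with respect to $K$ are already little brothers of $\grl$ (the inclusion $\YB_K(\grl)\subset\YB(\grl)$), which also answers your worry about ``unrelated'' little brothers appearing. (b) $\VV{\eta}$ should never occur as a tensor factor: the correct bookkeeping (Lemma~\ref{lem:induzione}) is $\VV{\mu+\grl}\subset\VV{\mu+\eta_K}\otimes\VV{\grl'}$ with $\grl'\in\overline\YB(\grl)$, so $\eta_K$ is absorbed into the new, strictly larger weight $\mu'=\mu+\eta_K$ on which one inducts, while the extracted factors are always $\grl$ or its little brothers; your alternative of realizing $\VV{\eta}$ inside some power of $\VV{\grl}$ merely because $\eta\leq\grl$ has no justification (dominance does not yield such containments) and is unnecessary. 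Finally, the case requiring Lemma~\ref{lem:zeta} is not ``the running weight has a long root in its support'' but the configuration where $\gra_S\in\Supp(\mu)$ and no short root $\gra\in\Supp(\grl)$ satisfies $\langle\grl-\mu,\gra^\vee\rangle\geq 0$; there one must take $\mu'=\mu+\zeta$ (checking it is dominant and $\leq\grl$) and apply Lemma~\ref{lem:zeta} shifted by $\mu-\gro_S$, which is legitimate exactly because $\gra_S\in\Supp(\mu)$. Without these ingredients the induction neither terminates inside the allowed set of factors nor stays among dominant weights $\leq\grl$, so as written the proof is incomplete.
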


Theorem~A stated in the introduction follows immediately by
considering the case $\Sigma=\{\grl\}$. The remaining part of this
section will be devoted to the proof of Theorem~\ref{teo:normalita}.
The general
strategy will be based on Proposition~\ref{prp:normalita} and will
proceed by induction on the dominance order of weights. The ingredients
of this induction will be the results proved in
section~\ref{ssez:prodottitensore}
together with the description of the dominance order given by J.~Stembridge in \cite{St}:
the dominance order between dominant weights is generated by pairs which
differ by the highest short root for a subsystem of the root system.

If $K$ is a subset of $\grD$, denote $\Phi_K\subset \Phi$ the
associated root subsystem and, in case $K$ is connected, denote by $\eta_K$ the
corresponding highest short root. Moreover, if $\grb =
\sum_{\gra\in\grD}n_\gra \gra$, set $\grb|_K=\sum_{\gra\in
  K}n_\gra \gra$. The result of \cite{St} that we will use is the following.

\begin{lem}[{\cite[Lemma 2.5]{St}}] \label{lem:stembridge}
Let $\grl,\mu$ be two dominant weights with $\grl>\mu$; set $I =
\Supp_\grD(\grl-\mu)$. Let $\Phi_K$ be an
irreducible subsystem of $\Phi_I$ (where $K\subset I$).
\begin{enumerate}[\indent (a)]
    \item If $\langle (\grl-\mu)\ristretto_K, \gra^\vee \rangle \geq 0$
      for all $\gra \in K \cap \Supp(\mu)$, then $\mu+\eta_K \leq \grl$.
    \item If in addition $\langle \mu + \eta_K, \gra^\vee
      \rangle \geq 0$ for all $\gra \in I \senza K$, then $\mu +
      \eta_K \in \grL^+$.
\end{enumerate}
\end{lem}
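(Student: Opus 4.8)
The plan is to reduce both parts to a single fact about dominant elements of a root lattice and then to carry out a coefficient-by-coefficient check. Throughout write $\grb=\grl-\mu=\sum_{\gra\in\grD}n_\gra\,\gra$, so that every $n_\gra\geq0$ and $I=\{\gra\in\grD\st n_\gra\neq0\}$, and write $\eta_K=\sum_{\gra\in K}m_\gra\,\gra$, where it is classical that $m_\gra>0$ for every $\gra\in K$ since the highest short root of an irreducible system has full support. Because $K\subset I$, the element $\grb|_K=\sum_{\gra\in K}n_\gra\,\gra$ has full support on $K$ too; moreover $\grb-\eta_K$ has coefficient $n_\gra\geq0$ at each $\gra\in\grD\senza K$. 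Hence the assertion $\mu+\eta_K\leq\grl$ of part (a), namely $\grb-\eta_K\in\mN\grD$, is \emph{equivalent} to $n_\gra\geq m_\gra$ for all $\gra\in K$, i.e.\ to $\grb|_K\geq\eta_K$ inside $\mN K$.

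First I would show that the hypothesis of (a), together with the dominance of $\grl$, forces $\grb|_K$ to be dominant for $\Phi_K$. For $\gra\in K$ the difference $\grb-\grb|_K=\sum_{\grg\in\grD\senza K}n_\grg\,\grg$ is a nonnegative combination of simple roots different from $\gra$, so $\langle\grb-\grb|_K,\gra^\vee\rangle\leq0$ and therefore $\langle\grb|_K,\gra^\vee\rangle\geq\langle\grb,\gra^\vee\rangle$. If $\gra\in K\senza\Supp(\mu)$ then $\langle\grb,\gra^\vee\rangle=\langle\grl,\gra^\vee\rangle\geq0$, while if $\gra\in K\cap\Supp(\mu)$ the inequality $\langle\grb|_K,\gra^\vee\rangle\geq0$ is exactly the hypothesis. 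Thus $\langle\grb|_K,\gra^\vee\rangle\geq0$ for every $\gra\in K$, so $\grb|_K$ is a nonzero dominant weight of $\Phi_K$.

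The crux is then the following purely root-theoretic input applied to $\Phi_K$: \emph{in an irreducible root system every nonzero dominant element of the root lattice is $\geq$ the highest short root.} Granting it, $\grb|_K\geq\eta_K$, i.e.\ $n_\gra\geq m_\gra$ for all $\gra\in K$, which is exactly part (a). To prove the input I would argue by descent in the finite poset $\mathcal P$ of nonzero dominant root-lattice elements that are $\leq\grb|_K$: picking a minimal element $\grd$ of $\mathcal P$ one has $\grb|_K\geq\grd$, and it suffices to see $\grd\geq\eta_K$. Now the dominant roots of an irreducible system are precisely the dominant representatives of the (at most two) $W$-orbits of roots, namely the highest root and the highest short root, and both are $\geq\eta_K$ because the highest root is the maximum of the entire root poset; so the claim reduces to showing that $\grd$ is a root. \textbf{This last step --- that a minimal nonzero dominant root-lattice element must be a root --- is the main obstacle}, and is where the irreducibility of $\Phi_K$ is essentially used.

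Finally, for (b) I would verify $\langle\mu+\eta_K,\gra^\vee\rangle\geq0$ for each $\gra\in\grD$, splitting $\grD$ into $K$, $I\senza K$ and $\grD\senza I$. For $\gra\in K$ both $\langle\mu,\gra^\vee\rangle\geq0$ (as $\mu$ is dominant) and $\langle\eta_K,\gra^\vee\rangle\geq0$ (as $\eta_K$ is dominant for $\Phi_K$). For $\gra\in I\senza K$ the inequality is precisely the additional hypothesis of (b). For $\gra\in\grD\senza I$ I would use dominance of $\grl$ to get $\langle\mu,\gra^\vee\rangle\geq-\langle\grb,\gra^\vee\rangle=\sum_{\grg\in I}n_\grg\,(-\langle\grg,\gra^\vee\rangle)\geq\sum_{\grg\in K}n_\grg\,(-\langle\grg,\gra^\vee\rangle)$, where each $-\langle\grg,\gra^\vee\rangle\geq0$ since $\gra\notin I\supset K$; comparing with $\langle\eta_K,\gra^\vee\rangle=-\sum_{\grg\in K}m_\grg\,(-\langle\grg,\gra^\vee\rangle)$ and using $n_\grg\geq m_\grg$ from part (a) gives $\langle\mu+\eta_K,\gra^\vee\rangle\geq\sum_{\grg\in K}(n_\grg-m_\grg)(-\langle\grg,\gra^\vee\rangle)\geq0$. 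This covers all simple roots, so $\mu+\eta_K\in\grL^+$.
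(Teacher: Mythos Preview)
The paper does not give its own proof of this lemma: it is quoted verbatim from Stembridge \cite[Lemma~2.5]{St} and used as a black box. So there is nothing in the present paper to compare your argument against.

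That said, your outline is essentially Stembridge's own route. The reduction of (a) to the statement ``every nonzero dominant element of the root lattice of an irreducible system dominates the highest short root'' is exactly how \cite{St} proceeds, and your verification that $\grb|_K$ is $\Phi_K$-dominant is correct (including the observation that $\langle\grb|_K,\gra^\vee\rangle\geq\langle\grb,\gra^\vee\rangle$ for $\gra\in K$). The step you flag as the main obstacle --- that a minimal nonzero dominant root-lattice element is a (dominant) root --- is precisely the content Stembridge establishes separately earlier in \cite{St}; it is not trivial, and your descent argument does not by itself supply it. So your proof of (a) is correct modulo quoting that result, which is fair since the lemma itself is a citation.

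Your argument for (b) is complete and correct: the three-way split $K$, $I\senza K$, $\grD\senza I$ covers all simple roots, and the coefficient comparison $n_\grg\geq m_\grg$ from (a) is exactly what is needed in the third case.
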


The next two lemmas are the main steps of our induction.

\begin{lem}\label{lem:costruzioneK}
Suppose that $\Phi$ is irreducible; let $\grl,\mu\in\grL^+$ such that
$\grl>\mu$ and $\Supp_\grD(\grl-\mu)=\grD$. Assume that either $\Phi$ is simply laced, or there exists a short root $\gra \in \Supp(\grl)$ such that $\langle  \grl-\mu,\gra^\cech \rangle \geq 0$, or $\gra_S \notin \Supp(\mu)$.
Then there exists a connected subset $K$ of $\grD$ such that
\begin{enumerate}[\indent i)]
\item $\mu+\eta_K\leq \grl$;
\item $\mu+\eta_K\in \grL^+$;
\item $K\cap\Supp(\grl)\neq \vuoto$.
\end{enumerate}
\end{lem}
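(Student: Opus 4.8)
The plan is to produce the subset $K$ explicitly, guided by Lemma~\ref{lem:stembridge}: we want an irreducible subsystem $\Phi_K$ of $\Phi_I = \Phi_\grD$ (so $K\subset\grD$ is a connected subset) satisfying both the ``$\geq 0$ on $K\cap\Supp(\mu)$'' condition (a) and the ``$\geq 0$ on $\grD\senza K$'' condition (b), which then give i) and ii) for free; condition iii) will be arranged by making sure $K$ meets $\Supp(\grl)$. First I would reduce to choosing $K$ so that $\langle(\grl-\mu)\ristretto_K,\gra^\vee\rangle\geq 0$ for all $\gra\in K$ (not merely those in $\Supp(\mu)$) and $\langle\mu+\eta_K,\gra^\vee\rangle\geq 0$ for all $\gra\in\grD\senza K$. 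Since $\Supp_\grD(\grl-\mu)=\grD$, the coefficient of every simple root in $\grl-\mu$ is positive, and for $\gra\in\grD\senza K$ one computes $\langle\mu+\eta_K,\gra^\vee\rangle = \langle\grl,\gra^\vee\rangle - \langle\grl-\mu,\gra^\vee\rangle + \langle\eta_K,\gra^\vee\rangle$; because $\grl-\mu$ has all simple-root coefficients positive and $\eta_K$ is supported on $K$, the pairing $\langle\eta_K,\gra^\vee\rangle$ for $\gra$ adjacent to $K$ tends to be negative, so this requires care at the boundary of $K$ — this is where the hypotheses on short roots enter.

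Concretely I would split into cases. \emph{Simply laced case:} take $K=\grD$, so $\Phi_K=\Phi$ is irreducible, $\grD\senza K=\vuoto$ makes (b) vacuous, and (a) reads $\langle\grl-\mu,\gra^\vee\rangle\geq 0$ for $\gra\in\Supp(\mu)$; if this fails for some such $\gra$ one shrinks $K$ away from that vertex. In fact the cleaner route in the simply laced case is: if $\langle\grl,\gra^\vee\rangle\geq\langle\grl-\mu,\gra^\vee\rangle$ would fail, use that $\grl$ dominant and $\grl-\mu\in\mN\grD$ with full support to still locate a connected $K$ hitting $\Supp(\grl)$ on which $(\grl-\mu)\ristretto_K$ is dominant for $\Phi_K$ and $\mu+\eta_K$ is dominant — this is a combinatorial search on the Dynkin diagram. \emph{Non-simply laced case:} here I use the trichotomy in the hypothesis. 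If there is a short root $\gra\in\Supp(\grl)$ with $\langle\grl-\mu,\gra^\vee\rangle\geq 0$, I aim for $K$ with $\eta_K=\eta$ (the highest short root of $\Phi$), i.e.\ $K=\grD$, and verify (a) using that the only potentially bad vertex is the long simple root $\gra_L$ adjacent to $\gra_S$, handled by the chosen short root. If instead $\gra_S\notin\Supp(\mu)$, then $\gra_S\notin K\cap\Supp(\mu)$ automatically, so condition (a) never needs checking at $\gra_S$, and again $K=\grD$ (or $K$ a large connected piece containing a root of $\Supp(\grl)$) works.

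The main obstacle I anticipate is the boundary bookkeeping in the non-simply laced case: one must ensure simultaneously that $(\grl-\mu)\ristretto_K$ is $\Phi_K$-dominant at the interior vertices of $K$ lying in $\Supp(\mu)$, that $\mu+\eta_K$ is $\grD$-dominant at vertices just outside $K$, and that $K\cap\Supp(\grl)\neq\vuoto$ — these three pull $K$ in different directions, and the inequality $\langle\eta_K,\gra_L^\vee\rangle<0$ for the long root adjacent to the short end of $K$ is exactly the source of trouble. The hypotheses are tailored precisely to neutralise that one bad pairing: either a short root of $\Supp(\grl)$ absorbs it via condition (a), or $\gra_S\notin\Supp(\mu)$ removes the need to check (a) there. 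I would therefore organise the proof as: (1) state the two sufficient inequalities implying i)--iii) via Lemma~\ref{lem:stembridge}; (2) dispatch the simply laced case with $K=\grD$ plus a shrinking argument; (3) in each non-simply laced type, exhibit $K$ and check the finitely many boundary pairings, using the relevant branch of the hypothesis to kill the pairing at $\gra_L$; the computations are short and can be read off Table~\ref{tab:hsr}.
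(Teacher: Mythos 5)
Your general framework --- apply Lemma~\ref{lem:stembridge} to a connected $K$ and control $\mu+\eta_K$ at the vertices adjacent to $K$ --- is the right one, but the concrete constructions you propose do not work, and the step you defer to a ``combinatorial search'' is the whole content of the lemma. With $K=\grD$, condition (a) of Lemma~\ref{lem:stembridge} requires $\langle\grl-\mu,\gra^\vee\rangle\geq 0$ at \emph{every} $\gra\in\Supp(\mu)$, and nothing in the hypotheses confines the failures of this inequality to the single vertex $\gra_L$: the assumption is only that $\grl-\mu$ has full support over $\grD$, so it may pair strictly negatively against many simple roots of $\Supp(\mu)$ --- in the simply laced case, in the case where a short root of $\Supp(\grl)$ pairs nonnegatively, and in the case $\gra_S\notin\Supp(\mu)$ alike. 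Hence both branches of your non-simply laced case assert something false, and in the simply laced case the argument reduces to ``shrink $K$ away from the bad vertices'', which is exactly where the three requirements (nonnegativity of $(\grl-\mu)\ristretto_K$ on $K\cap\Supp(\mu)$, dominance of $\mu+\eta_K$ outside $K$, and $K\cap\Supp(\grl)\neq\vuoto$) start to conflict; you name the conflict but never resolve it, nor do you ensure that a shrunken $K$ still meets $\Supp(\grl)$.

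The missing idea is a canonical choice of $K$ settling all three conditions at once. Set $K_1=\{\gra\in\grD\st\langle\grl-\mu,\gra^\vee\rangle\geq 0\}$ and let $K$ be a connected component of $K_1$ meeting $\Supp(\grl)$, chosen through a short root $\gra\in\Supp(\grl)$ with $\langle\grl-\mu,\gra^\vee\rangle\geq0$ whenever such a root exists. Then condition (a) holds at every $\gra\in K$, since discarding from $\grl-\mu$ the part supported outside $K$ only increases the pairing with $\gra^\vee$; moreover $K_1\cap\Supp(\grl)\neq\vuoto$, because $\grl>\mu$ forces $\langle\grl-\mu,\gra^\vee\rangle>0$ for some $\gra$, whence $\langle\grl,\gra^\vee\rangle>0$, so iii) can be arranged; and every $\gra\in\partial K$ lies in $\grD\senza K_1\subset\Supp(\mu)$ (if $\langle\grl-\mu,\gra^\vee\rangle<0$ then $\langle\mu,\gra^\vee\rangle>0$), so ii) follows once one knows $\langle\eta_K,\gra^\vee\rangle=-1$ for all $\gra\in\partial K$. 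This last equality is where the trichotomy in the hypothesis really enters: it guarantees $\gra_S\in K$ whenever $\gra_L\in K$ (if the chosen short root lies in $K$ together with $\gra_L$, connectivity forces the path between them, hence $\gra_S$, to lie in $K$; otherwise $\gra_S\notin\Supp(\mu)$ gives $\gra_S\in K_1$ and hence $\gra_S\in K$), which excludes the only configuration producing $\langle\eta_K,\gra^\vee\rangle\leq-2$, namely a long root of $K$ adjacent to a short root outside $K$; combined with the fact, read off Table~\ref{tab:hsr}, that a root of $K$ having a neighbour outside $K$ has coefficient $1$ in $\eta_K$ (and that the Dynkin diagram is a tree, so a boundary vertex has a unique neighbour in $K$), one gets $\langle\eta_K,\gra^\vee\rangle=-1$ on $\partial K$. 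None of this boundary analysis appears in your proposal, and without it the statement is not proved.
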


\begin{proof}
Set $K_1= \{\gra\in \grD\st \langle\grl-\mu,\gra^\cech\rangle\geq 0\}$.
Since $\grl>\mu$ we have that $K_1\cap\Supp(\grl)$ is non-empty.
Notice also that $\Supp(\mu)\supset \grD\senza K_1$. Define $K$ as follows:
\begin{itemize}
	\item[a)] If $\Phi$ is simply laced, let $K$ be a connected component of $K_1$
which intersects $\Supp(\grl)$.
	\item[b)] If $\gra\in\Supp(\grl)$ is a short root such that
$\langle \grl-\mu, \gra^\cech \rangle \geq 0$
let $K$ be the connected component of $K_1$
containing $\gra$.
	\item[c)] If $\Phi$ is not simply laced and there does not exist a short root
$\gra$ as in b), let $K$ be a connected component of $K_1$ which intersects
$\Supp(\grl)$.
\end{itemize}

Properties $i)$ and $iii)$ are then easily verified by Lemma~\ref{lem:stembridge}(a)
 and by construction.

To prove $ii)$ notice that, if $\Phi$ is not simply laced, by the construction
of $K$ it follows that if $\gra_L \in K$ then $\gra_S \in K$ as well:
indeed, $K$ is a connected component of $K_1$ and if there is no short root
$\gra$ as in b) then $\gra_S \not \in \Supp(\mu)$ implies $\gra_S \in K_1$.
By the description of highest short roots in Table~\ref{tab:hsr}
we deduce that, if $\gra \in K \senza K^{\circ}$,
then the respective coefficient in $\eta_K$ is $1$:
hence $\langle \eta_K, \gra^\vee \rangle = -1$ for all $\gra \in \partial K$ and,
since $\Supp(\mu)\supset \grD \senza K_1 \supset \partial{K}$, we get $\mu + \eta_K \in \grL^+$.
\end{proof}

In order to proceed with the induction, in the next lemma we will need
to consider the condition $(\star)$ also for a Levi subgroup of $G$.
If $K\subset \grD$ let $L_K$ be the associated standard
Levi subgroup; we say that $\grl\in\grL^+$ satisfies condition $(\star_K)$ if, for every
non-simply laced connected component $K'$ of $K$ such that $\Supp(\grl)\cap K'$ contains a
long root, $\Supp(\grl)\cap K'$ contains also the short root adjacent to a
long root. Notice that if $\grl$ satisfies $(\star)$ then it
also satisfies $(\star_K)$ for all $K\subset \grD$.

Similarly we can also define the little brother
of a dominant weight w.r.t.\ the Levi subgroup $L_K$:
if $K'$ is a connected component of $K$
such that $\grl$ does not satisfy $(\star_{K'})$, define
the little brother $\grl_{K'}^\yb$ w.r.t.\ $K'$
as in Definition \ref{twin} and denote by $\YB_K(\grl)$ the set
of little brothers of $\grl$ constructed in this way.
Notice that if $K'$ is a connected component of $K$
such that $\grl$ does not satisfy $(\star_{K'})$ and
if $\grD'$ is the connected component of $\grD$ containing $K'$, then
$\grl$ does not satisfy $(\star_{\grD'})$ as well and
$\grl_{K'}^\yb=\grl_{\grD'}^\yb $. In particular $\YB_K(\grl)\subset
\YB(\grl)$.

\begin{lem}\label{lem:induzione}
Assume $G$ to be simple and let $\grl,\mu$ be two dominant weights
such that $\grl>\mu$ and $\Supp_\grD(\grl-\mu)=\grD$. Then there
exist $\mu'\in\grL^+$ and $\grl'\in \overline \YB(\grl)$
such that $\mu<\mu'\leq\grl$ and $$ \VV{\mu+\grl}\subset
\VV{\mu'}\otimes \VV{\grl'}. $$
\end{lem}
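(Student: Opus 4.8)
The plan is to combine the combinatorial input of Lemma~\ref{lem:stembridge} (via Lemma~\ref{lem:costruzioneK}) with the representation-theoretic facts of Section~\ref{ssez:prodottitensore}, and to handle the two non-simply-laced groups $\sfG_2$ and $\sfC_r$ (together with $\sfF_4$) separately because the ``good short root'' hypothesis of Lemma~\ref{lem:costruzioneK} may fail. First I would dispose of the easy reduction: by Lemma~\ref{lem:traslazione}, replacing $\grl$ and $\mu$ by $\grl-\nu$ and $\mu-\nu$ for a suitable dominant $\nu$ does not affect the conclusion, so I may assume $\Supp(\grl)\cap\Supp(\mu)$ is as small as possible; in particular I may assume $\langle\grl-\mu,\gra^\vee\rangle\le 0$ for each $\gra\in\Supp(\mu)$ after such a shift, which is exactly the setup in which Lemma~\ref{lem:costruzioneK} is designed to produce a connected $K\subset\grD$ with $\mu+\eta_K\in\grL^+$, $\mu+\eta_K\le\grl$, and $K\cap\Supp(\grl)\ne\vuoto$.

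The main case is when such a $K$ exists (this covers all simply laced $G$, and the non-simply-laced cases where either there is a short root $\gra\in\Supp(\grl)$ with $\langle\grl-\mu,\gra^\vee\rangle\ge0$ or $\gra_S\notin\Supp(\mu)$). Here I would set $\mu'=\mu+\eta_K$; then $\mu<\mu'\le\grl$. It remains to produce $\grl'\in\overline\YB(\grl)$ with $\VV{\mu+\grl}\subset\VV{\mu'}\otimes\VV{\grl'}$. The idea is to apply Lemma~\ref{lem:eta} \emph{inside the Levi subgroup $L_K$}: since $K$ is connected and $K\cap\Supp(\grl)\ne\vuoto$, the restriction $\grl|_{L_K}$ is a nonzero dominant weight for $L_K$, so Lemma~\ref{lem:eta} (applied to the simple factor of $L_K$ supported on $K$) gives either $\VL{\grl}\subset\VL{\eta_K}\otimes\VL{\grl}$ (if $\grl$ satisfies $(\star_K)$) or $\VL{\grl}\subset\VL{\eta_K}\otimes\VL{\grl_{K'}^\yb}$ for the relevant component $K'$ of $K$ (if it does not); recall $\grl_{K'}^\yb=\grl_{\grD'}^\yb\in\YB(\grl)$. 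Translating this $L_K$-inclusion by $\mu$ via Lemma~\ref{lem:traslazione} inside $L_K$ yields $\VL{\mu+\grl}\subset\VL{\mu+\eta_K}\otimes\VL{\grl'}$ with $\grl'\in\{\grl\}\cup\YB(\grl)$, and finally Lemma~\ref{lem:riduzionelevi} lifts this back to $G$: since $\Supp_\grD\big((\mu+\eta_K)+\grl'-(\mu+\grl)\big)=\Supp_\grD(\eta_K-(\grl-\grl'))\subset K$, the Levi inclusion is equivalent to the $G$-inclusion $\VV{\mu+\grl}\subset\VV{\mu+\eta_K}\otimes\VV{\grl'}$. One must check $\mu'\le\grl$ is compatible, i.e. that $\grl'$ really lies in $\overline\YB(\grl)$ and $\mu+\eta_K$ is dominant — both guaranteed by Lemma~\ref{lem:costruzioneK}.

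The remaining case is $G$ not simply laced, $\gra_S\in\Supp(\mu)$, and no short $\gra\in\Supp(\grl)$ with $\langle\grl-\mu,\gra^\vee\rangle\ge0$; after the normalization above this forces $\Supp(\grl)\cap\Supp(\mu)\supset\{\gra_S\}$ in a rigid way and, examining the coefficients, it forces $\Supp(\grl)$ to contain a long root (in $\sfG_2$ this means $\grl$ violates $(\star)$). This is exactly the situation addressed by Lemma~\ref{lem:zeta}: for $\sfF_4$ and $\sfC_r$ I would take $\mu'=\mu+\zeta$ (possible since $\Supp_\grD(\grl-\mu)=\grD$ forces $\mu+\zeta\le\grl$, and $\mu+\zeta$ is dominant because $\mu+\eta=\mu+\omega_S$ differs from $\mu+\zeta$ by positive roots; one checks dominance directly) and use part (1), $\VV{\grl+\omega_S}\subset\VV{\zeta+\omega_S}\otimes\VV{\grl}$, after the Lemma~\ref{lem:traslazione} shift that reduces to $\grl=\omega_{\gra_L}$; for $\sfG_2$ I would use parts (2)–(3), taking $\grl'=\grl^\yb$ or $\grl'=\grl$ accordingly, with $\mu'=\mu+\omega_2=\mu+\eta$. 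In each subcase $\VV{\eta}$ is the adjoint-type module and the translation lemma reduces everything to the finitely many explicit inclusions already recorded in Lemmas~\ref{lem:eta} and~\ref{lem:zeta}.

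\textbf{Expected main obstacle.} The delicate point is the bookkeeping in this last non-simply-laced case: verifying that when Lemma~\ref{lem:costruzioneK} genuinely does not apply, the hypothesis $\Supp_\grD(\grl-\mu)=\grD$ together with $\gra_S\in\Supp(\mu)$ pins down the configuration tightly enough that the \emph{global} (not merely Levi) inclusions of Lemma~\ref{lem:zeta} are available with a dominant $\mu'$ strictly between $\mu$ and $\grl$ — in particular checking $\mu+\zeta\le\grl$ and the dominance of $\mu+\omega_S$ (resp. $\mu+\omega_2$ in $\sfG_2$) from the sign constraints on $\langle\grl-\mu,\gra^\vee\rangle$. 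Everything else is a routine assembly of Lemmas~\ref{lem:riduzionelevi}, \ref{lem:traslazione}, \ref{lem:eta}, \ref{lem:zeta}, \ref{lem:stembridge}, and~\ref{lem:costruzioneK}.
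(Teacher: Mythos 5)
Your main case (Lemma~\ref{lem:costruzioneK} applies, set $\mu'=\mu+\eta_K$, apply Lemma~\ref{lem:eta} inside the Levi $L_K$, translate by $\mu$, lift with Lemma~\ref{lem:riduzionelevi}, use $\YB_K(\grl)\subset\YB(\grl)$) is exactly the paper's argument and is fine. The problems are in the residual case and in your opening reduction. First, the ``easy reduction'' replacing $(\grl,\mu)$ by $(\grl-\nu,\mu-\nu)$ is not conclusion-preserving and is not justified: the statement to be proved involves $\overline\YB(\grl)$ and the constraint $\mu<\mu'\le\grl$, and Lemma~\ref{lem:traslazione} only lets you add a dominant weight to \emph{one} tensor factor, so transporting the shifted conclusion back either lands $\grl''+\nu$ outside $\overline\YB(\grl)$ (the little brother of $\grl-\nu$ depends on $\Supp(\grl-\nu)$, which the shift changes) or destroys $\mu'\le\grl$. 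The paper makes no such reduction, and in fact your later case split (the trichotomy governing when Lemma~\ref{lem:costruzioneK} applies) is stated for the original weights, so the reduction is both unjustified and unnecessary; moreover your residual-case sentence ``this forces $\Supp(\grl)\cap\Supp(\mu)\supset\{\gra_S\}$'' is incompatible with having normalized the supports to be disjoint.

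Second, and more seriously, your residual case ($\Phi$ non-simply laced, $\gra_S\in\Supp(\mu)$, no short root of $\Supp(\grl)$ with $\langle\grl-\mu,\gra^\vee\rangle\ge0$) omits type $\sfB_r$ entirely: Lemma~\ref{lem:zeta} says nothing about $\sfB_r$, yet this case genuinely occurs there (e.g.\ $\sfB_2$, $\grl=3\gro_1$, $\mu=2\gro_2$, so $\grl-\mu=2\gra_1+\gra_2$, $\gra_S=\gra_2\in\Supp(\mu)$, $\Supp(\grl)=\{\gra_1\}$ long). The paper handles it by observing that in type $\sfB$ one has $\zeta=\eta=\gro_1$, so Lemma~\ref{lem:eta} plus Lemma~\ref{lem:traslazione} give $\VV{\mu+\grl}\subset\VV{\mu+\zeta}\otimes\VV{\grl'}$ with $\grl'\in\overline\YB(\grl)$. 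Third, your $\sfG_2$ bookkeeping is wrong: in $\sfG_2$ the highest short root is $\eta=\gro_1$, not $\gro_2$, and the correct choice is $\mu'=\mu+\zeta=\mu+\gro_2-\gro_1$, obtained by adding the dominant weight $\mu-\gro_1$ (here you use $\gra_1\in\Supp(\mu)$) to the first factor of Lemma~\ref{lem:zeta}(2)--(3); with your $\mu'=\mu+\gro_2$ the requirement $\mu'\le\grl$ can fail, since $\Supp_\grD(\grl-\mu)=\grD$ only guarantees $\grl-\mu\ge\zeta$, not $\grl-\mu\ge\gro_2=3\gra_1+2\gra_2$. (Your parenthetical ``$\mu+\eta=\mu+\gro_S$'' in the $\sfC_r/\sfF_4$ dominance check is likewise false, though the dominance of $\mu+\zeta$ does hold, precisely because $\gra_S\in\Supp(\mu)$ compensates $\langle\zeta,\gra_S^\vee\rangle=-1$.) These are repairable, but as written the residual case is not proved.
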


\begin{proof}
Suppose first that either $\Phi$ is simply laced or $\gra_S \notin \Supp(\mu)$ or there
exists a short root $\gra$ in $\Supp(\grl)$ such that $\langle
 \grl-\mu, \gra^\cech \rangle \geq 0$. Take $K$ as in
Lemma~\ref{lem:costruzioneK} and set $\mu'=\mu+\eta_K$:
then by Lemma~\ref{lem:eta} together with Lemma~\ref{lem:traslazione}
we get
\[
    \VK{\mu + \grl} \subset \VK{\mu'} \otimes \VK{\grl'}
\]
with $\grl'\in\overline \YB_K(\grl)$. The claim follows by Lemma
\ref{lem:riduzionelevi} together with the inclusion $\YB_K(\grl)\subset \YB(\grl)$.

Suppose now that $\Phi$ is not simply laced, that $\gra_S \in \Supp(\mu)$ and that there
is no short root $\gra \in \Supp(\grl)$ such that $\langle
  \grl-\mu, \gra^\cech \rangle \geq 0$.
  Since $\grl>\mu$ there exists $\gra \in \Supp(\grl)$
   such that $\langle \grl-\mu , \gra^\vee\rangle \geq 0$:
therefore, $\Supp(\grl)$ contains at
least a long root. Set $\mu'=\mu + \zeta$; notice that
$\mu'\leq \grl$ and that $\mu'$ is dominant. The claim follows then by
Lemma~\ref{lem:eta} and by Lemma~\ref{lem:traslazione} if $\Phi$ is of type $\sfB$, while if $\Phi$ is of type $\sfC$, $\sfF_4$ or $\sfG_2$ it follows by
Lemma~\ref{lem:zeta} and by Lemma~\ref{lem:traslazione}.
\end{proof}

\begin{proof}[Proof of Theorem~\ref{teo:normalita}]
We prove first that the condition is necessary.  Assume that there
exists a little brother $\mu = \grl_{\grD'}^\yb$ of $\grl$ which is not in
$\Sigma$.  We prove that for every positive $n$ and for every choice
of weights $\grl_1,\dots,\grl_n \in \Sigma$ the module
$\VV{\mu+(n-1)\grl}$ is not contained in
$\VV{\grl_1}\otimes \cdots \otimes \VV{\grl_n}$.

We proceed by contradiction. Assume there exist weights $\grl_1,\dots,\grl_n$ as above and notice
that any of them satisfies $\mu\leq \grl_i\leq\grl$:
indeed, $\grl-\mu=n\grl-(\mu+(n-1)\grl)\geq n\grl-\sum\grl_{i}\geq \grl-\grl_{i}$ for every $i$.
Therefore $\Supp_\Delta (\sum \grl_i -(\mu+(n-1)\grl) ) \subset \Supp_\Delta (\grl - \mu)$.
By Definition \ref{twin} together with Lemma~\ref{lem:riduzionelevi}, it is enough to analyse
the case $G$ of type $\sfB_r$ and $\Supp(\grl) = \{\gra_1\}$ or $G$
of type $\sfG_2$ and $\Supp(\grl) = \{\gra_2\}$.
We analyse these two cases separately.

\emph{Type $\sfB_r$}: we have $\grl = a \omega_1$, $\mu=(a-1)\omega_1$
and $\mu+(n-1)\grl=(na-1)\omega_1$.  If $a=1$ we notice that there are
no dominant weights between $\grl$ and $\mu$. So the only possibility
is $\grl_i=\grl=\omega_1$ for all $i$ and this is in contradiction
with Lemma~\ref{lem:BGno}. If $a>1$, notice that there is only one dominant weight between
$\grl$ and $\mu$, namely $\nu=\grl-\gra_1=(a-2)\omega_1+\omega_2$;
hence for all $i$ it must be $\grl_i=\grl$ or $\grl_i=\nu$.
Since $\sum \grl_i \geq \mu+(n-1)\grl$, at most one $\grl_i$ can be equal to $\nu$;
therefore $\VV{\mu+(n-1)\grl}\subset \VV{\grl}^{\otimes n}$ or
$\VV{\mu+(n-1)\grl}\subset \VV{\nu} \otimes \VV{\grl}^{\otimes (n-1)}$.
In the first case we obtain
$$ \VV{(na-1)\omega_1}=\VV{\mu+(n-1)\grl}\subset \VV{\grl}^{\otimes
  n}\subset \VV{\omega_1}^{\otimes na},$$
  against Lemma~\ref{lem:BGno}. In the second case we notice that
$\VV{\omega_2} = \grL^2 \VV{\omega_1} \subset \VV{\omega_1}^{\otimes 2}$, hence
$\VV{\nu}\subset \VV{(a-2)\omega_1}\otimes \VV{\omega_2} \subset \VV{\omega_1}^{\otimes a}$
and we can conclude as in the first case.

\emph{Type $\sfG_2$}: we have $\grl = a \omega_2$, $\mu=\omega_1+(a-1)\omega_2$ and
we proceed as in the previous case.

\

We now prove that the condition is sufficient, showing that
for every dominant weight $\mu \leq \grl$ there exist $n>0$ and weights
$\grl_1,\dots,\grl_n \in \overline \YB(\grl)$ such that
$\VV{\mu+(n-1)\grl}\subset\VV{\grl_1}\otimes \cdots\otimes
\VV{\grl_n}$. To do this, we proceed by decreasing
induction with respect to the dominance order.

If $\mu = \grl$ then the claim is clear, so we assume $\mu < \grl$.
Let $\grl - \mu = \grb_1+\dots+\grb_m$ where $\Supp_\grD(\grb_i)$ are the connected
components of $\Supp_\grD(\grl-\mu)$.  Set $K= \Supp_\grD(\grb_1)$ and
$\grb'=\grb_2+\dots+\grb_m$.
Notice that $\mu+\grb_1$ is dominant: indeed if $\gra \not \in \ol{K}$ then
$\langle \mu + \beta_1 , \gra^\vee \rangle =  \langle \mu , \gra^\vee \rangle \geq 0$, while 
if $\gra \in \ol{K}$ then
$\langle \mu + \beta_1 , \gra^\vee \rangle = \langle \grl - \grb',
\gra^\vee \rangle \geq  \langle \grl , \gra^\vee \rangle \geq 0$.
Notice moreover that, if $\nu\in \overline \YB_K(\mu+\grb_1)$, then $\nu+\grb' \in \overline
\YB(\grl)$. By Lemma~\ref{lem:induzione} applied to the semisimple part of the
Levi $L=L_K$ associated to $K$, there exists a weight $\mu'$
which is dominant with respect to $K$ such that $\mu < \mu'\leq \mu+\grb_1$
and there exists $\nu \in \overline \YB_K(\mu+\grb_1)$ which satisfy
$$\VL{\mu+\grb_1+\mu}\subset \VL{\mu'} \otimes \VL{\nu}.$$
By tensoring with $\VL{\grb'}$, which is a one dimensional representation,
we get $\VL{\mu+\grl}\subset \VL{\mu'} \otimes \VL{\grl'}$ with
$\grl'=\nu+\grb'\in \overline \YB(\grl)$.
Since $\langle \mu',\gra^\vee \rangle \geq \langle \mu+\grb_1,\gra^\vee \rangle$
for every $\gra \not \in K$, $\mu'$ is a dominant weight;
by Lemma~\ref{lem:riduzionelevi} we get then $\VV{\mu+\grl}\subset \VV{\mu'} \otimes \VV{\grl'}$
and we may apply the induction on $\mu'\leq \grl$.
Therefore there exist weights
$\grl_1,\dots,\grl_n \in \overline \YB(\grl)$ such that
$\VV{\mu'+(n-1)\grl}\subset\VV{\grl_1}\otimes \cdots\otimes
\VV{\grl_n}$. Finally by Lemma~\ref{lem:traslazione} we conclude
$$\VV{\mu+n\grl}\subset \VV{\mu'+(n-1)\grl}\otimes \VV{\grl'} \subset
\VV{\grl_1}\otimes \cdots\otimes \VV{\grl_n} \otimes \VV{\grl'}.$$
\end{proof}

\section{Smoothness}

In this section we will study the variety $\wt{X}_\grl$; in particular
we will give necessary and sufficient conditions on $\Supp(\grl)$ for
its $\mathbb{Q}$-factoriality and for its smoothness.

Thanks to Lemma \ref{lem:immersioni}, we may assume that $G$ is a
simple group. Indeed suppose $\grD = \cup_{i=1}^n \grD_i$ is the
decomposition in connected components and write $\grl = \grl_1 +
\ldots + \grl_n$ with $\Supp(\grl_i)\subset \grD_i$: correspondingly
we get a decomposition $X_\grl = X_{\grl_1} \times \ldots \times
X_{\grl_n}$, and every $X_{\grl_i}$ is an embedding of the
corresponding simple factor of $G_\mathrm{ad}$ if $\grl_i \neq 0$ or a
point if $\grl_i = 0$. From now on, we will therefore assume that
$\Phi$ is an irreducible root system.

By the Bruhat decomposition, the group $G_\mathrm{ad}$ has an open
$B\times B^-$-orbit; therefore it is a spherical $G\times
G$-homogeneous space. Following the general theory of spherical
embeddings (see \cite{Kn}), its simple normal embeddings are
classified by combinatorial data called the \textit{colored
  cones}. Here we will skip an overview of such theory, and we will
simply recall the definition of the colored cone in the particular
case of a simple normal embedding of $G_\mathrm{ad}$.

Recall that a normal variety $X$ is said $\mQ$-\textit{factorial} if,
given any Weil divisor $D$ of $X$, there exists an integer $n\neq 0$
such that $nD$ is a Cartier divisor.
In subsection~\ref{sez smooth-notaz}, we will explicitly describe the
colored cone of $\wt{X}_\grl$; then in subsection~\ref{sez
  Q-factoriality} we will study $\mathbb{Q}$-factoriality of
$\wt{X}_\grl$ following \cite{Br2}. Finally, in
subsection~\ref{sezione smoothness}, we will use
Theorem~\ref{teo:normalita} together with the description of the
colored cone of $\wt{X}_\grl$ to make more explicit the criterion of
smoothness given in \cite{Ti} in the case of a linear projective
compactification of a reductive group.

\subsection{The colored cone of $\wt{X}_\grl$}\label{sez smooth-notaz}

Let $X$ be a simple normal compactification of $G_\mathrm{ad}$, call $Y$ its unique closed orbit. Set $\calD(G_\mathrm{ad})$ the set of $B\times B^{-}$-stable prime divisors of $G_\mathrm{ad}$ and $\calD(X)\subset \calD(G_\mathrm{ad})$ the set of divisors whose closure in $X$ contains $Y$.
Let $\calN(X)$ be the set of $G\times G$-stable prime divisors of $X$, so that the set of $B\times B^{-}$-stable prime divisors of $X$ is identified with $\calD(G_\mathrm{ad})\cup \calN(X)$.

Let $T_\mathrm{ad}\subset G_\mathrm{ad}$ be the image of $T$; then the character group $\calX(T_\mathrm{ad})$ coincides with the root lattice $\mathbb{Z}\grD$, while the cocharacter group $\calX^\vee(T_\mathrm{ad})$ coincides with the coweight lattice $\grL^\vee$. If $V$ is a simple $G\times G$-module denote by $V^{(B\times B^{-})}$ the subset of $B\times B^{-}$-eigenvectors.
Notice that $\mk(G_\mathrm{ad})^{(B\times B^{-})}/\mk^{*} \simeq \mathbb{Z}\grD$ and define a natural map $\rho : \calD(G_\mathrm{ad})\cup \calN(X) \to \grL^\vee$ by associating to a $B\times B^-$-stable prime divisor of $X$ the cocharacter associated to the rational discrete valuation induced by $D$. If $D\in \calN(X)$, then $\rho(D)$ is the opposite of a fundamental coweight, while if $D\in \calD(G_\mathrm{ad})$, then $\rho(D)$ is a simple coroot; moreover, $\rho$ is injective and $\rho(\calD(G_\mathrm{ad}))=\Delta^\vee$ (see \cite[\S~7]{Ti}).

Let $\calC(X)$ be the convex cone in $\grL^\vee_{\mathbb{Q}}$ generated by $\rho\big(\calD(X)\cup \calN(X)\big)$; by the general theory of spherical embeddings we have that $\calC(X)$ is generated by $\rho(\calD(X))$ together with the negative Weyl chamber of $\Phi$. The \textit{colored cone} of $X$ is then the couple $\big(\calC(X),\calD(X)\big)$: up to equivariant isomorphisms, it uniquely determines $X$ as a $G\times G$-compactification of $G_\mathrm{ad}$.

In the case of the compactification $\widetilde{X}_{\grl}$,
then $\rho(\calD(X)) = \grD^\vee \senza \Supp(\lambda)^\vee$
(see \cite[Theorem~7]{Ti}).

\subsection{$\mathbb{Q}$-factoriality}\label{sez Q-factoriality}

In order to give a necessary and sufficient condition for the $\mathbb{Q}$-factoriality of $\wt{X}_\grl$ we need to determine the set of extremal rays of the associated cone $\calC(\wt{X}_\grl)$.

\begin{lem}
If $\gra \in \grD\senza \Supp(\lambda)$, then $\gra^\vee$ generates an extremal ray of $\calC(\wt{X}_\grl)$.
\end{lem}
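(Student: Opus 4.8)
The goal is to show that for $\gra \in \grD \senza \Supp(\grl)$, the coroot $\gra^\vee$ spans an extremal ray of $\calC(\wt X_\grl)$. Recall from the previous subsection that $\calC(\wt X_\grl)$ is generated by $\rho(\calD(\wt X_\grl)) = \grD^\vee \senza \Supp(\grl)^\vee$ together with the negative Weyl chamber, i.e.\ the cone $\{v \in \grL^\vee_\mQ \st \langle \grf, v\rangle \leq 0 \text{ for all dominant } \grf\}$. So I must show that $\gra^\vee$ cannot be written as a nonnegative combination of the other simple coroots $\grb^\vee$ (with $\grb \in \grD \senza \Supp(\grl)$, $\grb \neq \gra$) and elements of the negative Weyl chamber.

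The plan is to separate $\gra^\vee$ from the rest of the cone by a linear functional. The natural candidate is the fundamental weight $\omega_\gra$: it pairs to $1$ with $\gra^\vee$ and to $0$ with every other simple coroot $\grb^\vee$, and since $\omega_\gra$ is dominant it pairs $\leq 0$ with every element of the negative Weyl chamber. Hence the hyperplane $\{\langle \omega_\gra, -\rangle = 0\}$ contains all the generators of $\calC(\wt X_\grl)$ other than $\gra^\vee$ (the simple coroots $\grb^\vee$ with $\grb\ne\gra$ lie on it; the negative Weyl chamber lies in the closed half-space $\langle\omega_\gra,-\rangle\le 0$), while $\langle \omega_\gra, \gra^\vee\rangle = 1 > 0$. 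This already shows $\gra^\vee$ is not in the cone generated by the remaining generators, so $\mQ_{\geq 0}\gra^\vee$ is a face.

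To conclude it is an \emph{extremal ray} (a one-dimensional face) I must also check that $\gra^\vee$ is not itself decomposable as a sum of two linearly independent vectors of $\calC(\wt X_\grl)$; equivalently, that the face $\calC(\wt X_\grl) \cap \{\langle \omega_\gra, -\rangle \geq 1\}$ — more precisely the exposed face where $\omega_\gro_\gra$ attains its relevant value — is exactly $\mQ_{\geq 0}\gra^\vee$. Concretely: if $v = \sum_{\grb \neq \gra} c_\grb \grb^\vee + c_\gra \gra^\vee + w$ with $c_\grb, c_\gra \geq 0$ and $w$ in the negative Weyl chamber, and $\langle \omega_\gra, v\rangle = c_\gra > 0$, then I want to force $c_\grb = 0$ and $w = 0$. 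For this one uses that each $\grb^\vee$ ($\grb\ne\gra$) and $w$ already lie in the hyperplane $\langle\omega_\gra,-\rangle=0$, which is a supporting hyperplane of the cone; so if $v$ is a positive multiple of $\gra^\vee$ one inspects the representation and uses linear independence of simple coroots together with the fact that the negative Weyl chamber meets the hyperplane $\{\langle\omega_\gra,-\rangle=0\}$ in a proper subcone whose only way to cancel against the $\grb^\vee$'s and land on $\mQ\gra^\vee$ is trivially. The cleanest phrasing: intersecting $\calC(\wt X_\grl)$ with the supporting hyperplane $\langle\omega_\gra,-\rangle=0$ gives the face generated by $\{\grb^\vee : \grb\in\grD\senza\Supp(\grl),\,\grb\ne\gra\}$ and the negative Weyl chamber of the root subsystem $\Phi \cap \gra^{\perp}$, which does not contain $\gra^\vee$; hence the complementary extremal data forces $\mQ_{\ge0}\gra^\vee$ to be a ray.

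The only mild obstacle is the second half — promoting ``face'' to ``extremal ray'' — which requires knowing that $\gra^\vee$ does not sneak into the span of the hyperplane generators; but this is immediate from $\langle\omega_\gra,\gra^\vee\rangle = 1 \neq 0$, so in fact $\gra^\vee \notin \operatorname{span}(\{\grb^\vee : \grb \neq \gra\} \cup (\text{negative Weyl chamber}))$ is false in general (the span can be everything), and the right argument is the exposed-face argument above rather than a span computation. I would write it as: $\omega_\gra$ is nonnegative on $\calC(\wt X_\grl)$, vanishes on a set of generators whose cone does not contain $\gra^\vee$, and is positive on $\gra^\vee$; by a standard fact about polyhedral cones this makes $\mQ_{\geq 0}\gra^\vee$ an extremal ray.
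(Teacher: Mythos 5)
Your first step is correct and is essentially the paper's own argument in a slightly different guise: you evaluate the fundamental weight $\omega_\gra$ on the generators of $\calC(\wt X_\grl)$ (value $1$ on $\gra^\vee$, $0$ on the other simple coroots, $\le 0$ on the negative Weyl chamber) and conclude that $\gra^\vee$ is not a nonnegative combination of the remaining generators; the paper runs the same computation contrapositively with the functional $\gra$ instead of $\omega_\gra$, obtaining $\langle \gra,\gra^\vee\rangle\le 0$ against $\langle\gra,\gra^\vee\rangle=2$. That difference is immaterial.

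The genuine problem is your upgrade from ``$\gra^\vee$ is not in the cone generated by the other generators'' to ``$\mQ_{\ge 0}\gra^\vee$ is an extremal ray''. The justification you give is wrong as written: $\ker\omega_\gra$ is \emph{not} a supporting hyperplane of $\calC(\wt X_\grl)$ and $\omega_\gra$ is \emph{not} nonnegative on $\calC(\wt X_\grl)$ --- the cone contains the whole negative Weyl chamber (it is full-dimensional), and $\langle\omega_\gra,-\omega_\gra^\vee\rangle<0$, so $\omega_\gra$ changes sign on the cone and the exposed-face ``standard fact'' you invoke has a false hypothesis. Moreover the implication ``necessary generator $\Rightarrow$ extremal ray'' genuinely fails for cones containing a line: in $\mQ^2$ the cone generated by $\pm e_1,\pm e_2$ is the whole plane, $e_1$ is not in the cone of the other three generators, yet there is no extremal ray at all. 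What rescues the argument is the strict convexity (pointedness) of $\calC(\wt X_\grl)$, which holds because it is the colored cone of the simple embedding $\wt X_\grl$ (see \cite{Kn}); for a pointed polyhedral cone, a generator $v$ which is not a nonnegative combination of the other generators does span an extremal ray (write $v=x+y$ with $x,y$ in the cone, expand both in the generators, and use that in a pointed cone a sum of elements is $0$ only if all summands are $0$). With this ingredient --- which is also what the paper tacitly uses, in contrapositive form, when it writes a non-extremal $\gra^\vee$ as a combination of the other generators --- your $\omega_\gra$-computation completes the proof; but the proposal's final paragraph, as stated, is incorrect and needs to be replaced by the pointedness argument.
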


\begin{proof}
If a simple coroot $\gra^\vee \in \calC(\wt{X}_\grl)$ does not generate an extremal ray, then we can write
\[
	\gra^\vee = \sum_{\grb \in \grD \senza \{\gra\}} a_{\grb}\grb^{\vee} - \sum_{\grb \in \grD}
	b_{\grb}\omega_{\grb}^{\vee},
\]
with $a_\grb, b_\grb \geq 0$ for every $\grb$: this yields a contradiction since then it would be $\langle \gra,\gra^\vee \rangle \leq 0$.
\end{proof}

Recall that a convex cone is said to be \textit{simplicial} if it is generated by linearly independent vectors; the following proposition is a particular case of a characterization of $\mathbb{Q}$-factoriality that M.~Brion gave in \cite{Br2} in the general case of a spherical variety. We recall it in the case of our interest.

\begin{prop} [see {\cite[Proposition 4.2]{Br2}}] \label{lem Q fattor}
The variety $\wt{X}_\grl$ is $\mathbb{Q}$-factorial if and only if $\calC(\wt{X}_\grl)$ is simplicial.
\end{prop}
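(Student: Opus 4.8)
The plan is to obtain the statement as a direct specialization of M.~Brion's general characterization of $\mQ$-factoriality for spherical varieties \cite[Proposition 4.2]{Br2}. First I would recall that criterion in the form we need: a simple normal compactification $X$ of $G_\mathrm{ad}$, with colored cone $(\calC(X),\calD(X))$, is $\mQ$-factorial if and only if its colored cone is \emph{simplicial}, in the sense that the primitive vectors $\rho(D)$ --- for $D$ running over the colors $\calD(X)$ together with the $G\times G$-stable prime divisors $\calN(X)$ --- generate pairwise distinct extremal rays of $\calC(X)$ and form a linearly independent set. The whole content of the proof is then to check that, in the case $X=\wt X_\grl$, this colored notion of ``simplicial'' collapses to $\calC(\wt X_\grl)$ being simplicial as an ordinary convex cone.

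The key step is this collapse. By the description recalled in section~\ref{sez smooth-notaz}, the vectors $\rho(D)$ with $D\in\calD(\wt X_\grl)$ are the simple coroots $\gra^\vee$ with $\gra\in\grD\senza\Supp(\grl)$ --- each of which spans an extremal ray of $\calC(\wt X_\grl)$, as just shown --- while the vectors $\rho(D)$ with $D\in\calN(\wt X_\grl)$ are negatives of fundamental coweights, namely the primitive generators of the extremal rays of $\calC(\wt X_\grl)$ lying in the negative Weyl chamber (the valuation cone of $G_\mathrm{ad}$), by the general theory of spherical embeddings. Now $\rho$ is injective on $\calD(G_\mathrm{ad})\cup\calN(\wt X_\grl)$, so distinct such divisors have distinct images; moreover the vectors involved are primitive and pairwise non-proportional. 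Indeed, two distinct simple coroots are non-proportional, as are two distinct negatives of fundamental coweights, and a simple coroot cannot be proportional to a negative fundamental coweight, since in the basis of simple coroots the former has non-negative coordinates while the latter has non-positive ones (the inverse Cartan matrix having non-negative entries), so such a proportionality would force both vectors to vanish. Hence the $\rho(D)$ with $D\in\calD(\wt X_\grl)\cup\calN(\wt X_\grl)$ are exactly the primitive generators of the distinct extremal rays of $\calC(\wt X_\grl)$, and Brion's condition reduces to the linear independence of these generators, i.e.\ to $\calC(\wt X_\grl)$ being simplicial.

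I do not expect a genuine obstacle here: there is no substantive geometric difficulty beyond transcribing Brion's criterion correctly and being careful that the $G\times G$-stable divisors, and not only the colors, enter the linear-independence requirement. The point deserving attention is precisely the non-proportionality check above, which is what guarantees that no color and no $G\times G$-stable divisor of $\wt X_\grl$ share a ray, so that the colored and the plain notions of simpliciality agree. If a self-contained argument were preferred, one could instead compare the ranks of $\mathrm{Cl}(\wt X_\grl)$ and $\Pic(\wt X_\grl)$ read off from the colored data, but invoking \cite{Br2} is shorter and is the route I would take.
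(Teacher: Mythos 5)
Your proof is correct and follows essentially the same route as the paper, which states this proposition as a particular case of Brion's characterization \cite[Proposition 4.2]{Br2} without further argument. The only difference is that you spell out the routine verification (injectivity of $\rho$ on the colors, non-proportionality of simple coroots with negatives of fundamental coweights, and the fact that each $\gra^\vee$ with $\gra\in\grD\senza\Supp(\grl)$ spans an extremal ray) showing that Brion's colored criterion collapses to plain simpliciality of $\calC(\wt X_\grl)$, a point the paper leaves implicit.
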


Therefore, since $\calC(\wt{X}_\grl)$ has maximal dimension, $\wt{X}_\grl$ is $\mathbb{Q}$-factorial if and only if the number of extremal rays of the associated cone equals the rank of $G$. To describe such rays we need to introduce some more notation; the description will be slightly more complicated if $\Phi$ is of type $\sfD$ or $\sfE$.

Denote $\grD^e$ the set of extremal roots of $\grD$ and set $\grD \senza \Supp(\grl) = \bigcup_{i=1}^n I_i$ the decomposition in connected components. Denote
\[
	I^e = \bigcup_{\substack{I_i \neq I_\mathsf{de} \\ I_i \cap \grD^e \neq \vuoto}} I_i,
\]
where $I_\mathsf{de}$ is defined as follows. If $\grD$ is of type $\sf{D}$ or $\sf{E}$, denote $\grg_\mathsf{de}$ the unique simple root which is adjacent to other three simple roots and, if it exists, denote $I_\mathsf{de} \subset \grD \senza \Supp(\grl)$ the unique connected component such that $\grg_\mathsf{de} \in I_\mathsf{de}$ and $|I_\mathsf{de} \cap \grD^e| = 1$, otherwise define $I_\mathsf{de}$ to be the empty set. Denote $I^\ast_\mathsf{de} \subset I_\mathsf{de}$ the minimal connected subset such that $\grg_\mathsf{de} \in I^\ast_\mathsf{de}$ and $I^\ast_\mathsf{de} \cap \grD^e \neq \vuoto$, or define it to be the empty set otherwise.
Finally define
\[
	J(\grl) = \big(\grD \senza (\ol{I^e} \cup I^\ast_\mathsf{de})\big) \cup \big(\grD^e \senza \Supp(\grl)\big).
\]

\begin{lem}\label{raggi estremali2}
The extremal rays of $\calC(\wt{X}_\grl)$ are
generated by the simple coroots $\alpha^{\vee}$ with $\alpha\in \grD\senza \Supp(\grl)$ and
by the opposite of fundamental coweights $-\omega_{\alpha}^{\vee}$
with $\alpha\in J(\grl)$.
\end{lem}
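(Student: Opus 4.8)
The plan is to compute explicitly the cone $\calC(\wt X_\grl)$, which by the discussion in Section~\ref{sez smooth-notaz} is generated by the simple coroots $\gra^\vee$ with $\gra\in\grD\senza\Supp(\grl)$ together with the negative Weyl chamber, i.e.\ the cone spanned by $-\gro_\grb^\vee$, $\grb\in\grD$. So $\calC(\wt X_\grl)$ is the convex hull of $\{\gra^\vee : \gra\in\grD\senza\Supp(\grl)\}\cup\{-\gro_\grb^\vee:\grb\in\grD\}$. The previous lemma already shows each $\gra^\vee$ with $\gra\in\grD\senza\Supp(\grl)$ spans an extremal ray, so the real task is to determine which of the rays $-\gro_\grb^\vee$ are extremal, and the assertion is precisely that these are the ones with $\grb\in J(\grl)$.

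First I would reduce the problem to a face-by-face analysis. The key elementary identity is that for any connected subset $K\subset\grD$ one has, writing $\eta_K^\vee$ for the highest short coroot of $\Phi_K$ (or rather the relevant combination of simple coroots along $K$),
\[
	-\gro_\gra^\vee \;=\; -\sum_{\grb\in K}\langle\gro_\gra,\grb^\vee\rangle\,\gro_\grb^\vee
	\;+\;(\text{a nonnegative combination of }\grb^\vee,\ \grb\in K),
\]
or more to the point: if $K\subset\grD\senza\Supp(\grl)$ is connected and $\gra\in\grD$ lies in the closure $\ol K$ with $\ol K\senza K=\{\gra\}$ (so $\gra$ is the unique simple root outside $K$ meeting $K$), then $-\gro_\gra^\vee$ is expressible as a nonnegative combination of the $\grb^\vee$ ($\grb\in K$) and of the $-\gro_\grb^\vee$ ($\grb\in K$), because summing the simple coroots along $K$ with suitable coefficients (read off from Table~\ref{tab:hsr} and the shape of the Dynkin diagram) kills all fundamental coweights except $\gro_\gra^\vee$. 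This shows $-\gro_\gra^\vee$ is \emph{not} extremal whenever such a $K$ exists, i.e.\ whenever $\gra\notin J(\grl)$; the combinatorial content of the definition of $J(\grl)$ — including the slightly delicate $I_{\mathsf{de}}$ and $I^\ast_{\mathsf{de}}$ business in types $\sfD,\sfE$, which handles the branch vertex $\grg_{\mathsf{de}}$ — is exactly to record for which $\gra$ no such connected $K\subset\grD\senza\Supp(\grl)$ exists. So one direction is: verify case by case (using the diagram) that $\gra\notin J(\grl)$ produces such a $K$, hence a nontrivial relation expressing $-\gro_\gra^\vee$ inside the cone generated by the other generators.

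For the converse — that $-\gro_\gra^\vee$ with $\gra\in J(\grl)$ and $\gra^\vee$ with $\gra\in\grD\senza\Supp(\grl)$ really are all extremal, and there are no further relations — I would exhibit for each claimed extremal ray a supporting hyperplane, i.e.\ a linear functional on $\grL^\vee_\mQ$ (equivalently an element of $\mQ\grD$, since the relations on the class group are generated by the simple roots) which is nonnegative on all generators of $\calC(\wt X_\grl)$ and vanishes exactly on the chosen ray. For $\gra^\vee$ ($\gra\in\grD\senza\Supp(\grl)$) the functional $-\gro_\gra$ works against the generators $-\gro_\grb^\vee$, and one checks it is $\geq 0$ on the remaining simple coroots $\grb^\vee$ ($\grb\in\grD\senza\Supp(\grl)$, $\grb\neq\gra$) since $\langle\gro_\gra,\grb^\vee\rangle\geq 0$; then refine with a small perturbation supported on $\Supp(\grl)$ to cut the ray down to dimension one. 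For $-\gro_\gra^\vee$ with $\gra\in J(\grl)$, one builds the supporting functional from the partition $\grD\senza\Supp(\grl)=\bigcup I_i$: roughly, take a dominant weight supported so as to pair to zero with $-\gro_\gra^\vee$ and strictly positively with every other generator, which is possible precisely because no connected run of $\grD\senza\Supp(\grl)$ "absorbs" $\gra$ — this is where membership in $J(\grl)$ is used. Equivalently and more cleanly, one can count: $\calC(\wt X_\grl)$ is full-dimensional, so the number of extremal rays is at least $\mathrm{rk}(G)$; the non-extremality argument above shows it is at most $|\grD\senza\Supp(\grl)|+|J(\grl)|$ minus redundancies, and a direct tally over the Dynkin types shows these match, pinning down the extremal rays exactly.

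I expect the main obstacle to be the bookkeeping in types $\sfD$ and $\sfE$ around the trivalent vertex $\grg_{\mathsf{de}}$: there the naive statement "$-\gro_\gra^\vee$ is extremal iff $\gra$ is not the single exit of a connected component of $\grD\senza\Supp(\grl)$" fails, because a component shaped like a branch containing $\grg_{\mathsf{de}}$ and exactly one leaf of $\grD$ does not let one solve for a single fundamental coweight (one gets a combination of the two fundamental coweights at the other two ends of the branch). The sets $I^e$, $I_{\mathsf{de}}$, $I^\ast_{\mathsf{de}}$ in the definition of $J(\grl)$ are engineered to carve out exactly these exceptions, and the proof in these cases amounts to checking the three or four possible local configurations of $\Supp(\grl)$ near $\grg_{\mathsf{de}}$ by hand, using the explicit highest-short-root data of Table~\ref{tab:hsr}. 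The simply-laced-with-no-branch cases (types $\sfA$) and the non-simply-laced cases ($\sfB,\sfC,\sfF_4,\sfG_2$) are then routine, since there $J(\grl)$ reduces to "$\grD$ minus the interior exits of $\grD\senza\Supp(\grl)$" and the computation with coroots along a path is immediate.
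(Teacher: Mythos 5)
Your non-extremality direction is essentially the paper's argument: for $\gra\notin J(\grl)$ you take a connected $K\subset\grD\senza\Supp(\grl)$ with $\ol K\senza K=\{\gra\}$ and expand a fundamental coweight of $\Phi_K$ in simple coroots. One small correction there: that expansion is $(\gro^K_\grg)^\vee=\sum_{\grb\in K}a_\grb\grb^\vee=\gro_\grg^\vee-m\,\gro_\gra^\vee$ for a suitable $\grg\in K$, so it does \emph{not} kill all fundamental coweights except $\gro_\gra^\vee$; one term $\gro_\grg^\vee$ survives with positive sign, which is harmless only because $-\gro_\grg^\vee$ is itself a generator of the cone. This is exactly the relation the paper writes, so this half is fine once rephrased.

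The genuine gap is in the extremality direction, i.e. that $-\gro_\gra^\vee$ is extremal for every $\gra\in J(\grl)$. Your primary route, a supporting functional vanishing on $-\gro_\gra^\vee$ and strictly positive on all other generators, is never constructed: the phrase ``which is possible precisely because no connected run of $\grD\senza\Supp(\grl)$ absorbs $\gra$'' is exactly the assertion to be proved, and as stated it is sign-inconsistent, since a dominant weight pairs nonpositively with every generator $-\gro_\grb^\vee$; the functional must instead be a nonpositive combination of simple roots that pairs positively with the relevant coroots, and producing it in the $\sfD/\sfE$ configurations around $\grg_\mathsf{de}$ is the whole point. Your ``cleaner'' counting fallback does not work in general: knowing there are at least $\mathrm{rk}(G)$ extremal rays and at most $|\grD\senza\Supp(\grl)|+|J(\grl)|$ candidates identifies the extremal rays only when these two numbers coincide, i.e. precisely in the $\mQ$-factorial case of Proposition~\ref{Q-fattorialita}; the lemma is needed for arbitrary $\grl$ (e.g. in type $\sfA_3$ with $\Supp(\grl)=\{\gra_2\}$ the cone has four extremal rays in rank three, so the bounds do not match). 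The paper closes this direction without any functional: assuming $-\gro_\gra^\vee=\sum_{\grb\in K}a_\grb\grb^\vee-\sum_{\grb\in H}b_\grb\gro_\grb^\vee$ with positive coefficients, it pairs with the simple roots in $\partial K$ to force $\partial K=\{\gra\}$, shows $K$ is connected (two components would give pairing $\leq -2$ against $-1$), and then notes that such a $K$ must contain a leaf of $\grD$, cannot contain $\grg_\mathsf{de}$ (else $|\partial K|\geq 2$), and cannot exist when $\gra$ is a leaf outside $\Supp(\grl)$ (else $K=\grD\senza\{\gra\}$ meets $\Supp(\grl)$), contradicting $\gra\in J(\grl)$. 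You need either to carry out this analysis or to exhibit your supporting functionals explicitly; as written, the extremality half is asserted rather than proved.
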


\begin{proof} Recall that $\calC(\wt{X}_\grl)$ is
generated by the simple coroots $\alpha^{\vee}$ with $\alpha\in \grD\senza \Supp(\grl)$ together with the
fundamental coweights $-\omega_{\alpha}^{\vee}$ with $\alpha\in \grD$ and that every coroot $\alpha^{\vee}$ with $\alpha\in \grD\senza \Supp(\grl)$
generates an extremal ray of $\calC(\wt{X}_\grl)$.

A coweight $-\omega_{\gra}^{\vee}$
does not generate an extremal ray if and only if it can be written as follows
\[
	-\omega_{\alpha}^{\vee}=\sum_{\grb \in K}
	a_{\grb}\grb^{\vee} - \sum_{\grb \in H}
	b_{\grb} \omega_{\grb}^{\vee}
\]
with $a_\grb>0 $ for every $\grb\in K$ and with $b_\grb >0$ for every $\grb\in H$,
for suitable non-empty subsets $K\subset \grD\senza \Supp(\grl)$ and $H \subset \grD \senza\{\gra\}$. Since the right member of the equality is negative against every simple root in $\partial{K}$,
we get $\partial{K} = \{\gra\}$. 

Notice that $K$ is connected. 
Indeed if $K' \subset K$ is a connected component then $\partial{K'} = \{\gra\}$
and $\sum_{\grb \in K'} a_{\grb} \langle \gra, \grb^{\vee} \rangle < 0$:
therefore if $K$ contains two connected components it must be 
$$
	\sum_{\grb \in K} a_{\grb} \langle \gra, \grb^{\vee} \rangle \leq -2.
$$
On the other hand $\langle \gra, \omega_{\grb}^{\vee} \rangle = 0$ for every $\grb \in H$,
therefore if $K$ is not connected it follows
\[
	-1 = - \langle \gra, \omega_{\alpha}^{\vee} \rangle 
	= \sum_{\grb \in K} a_{\grb} \langle \gra, \grb^{\vee} \rangle \leq -2.
\]

Since $\partial{K}$ is one single root, $K$ contains an extreme of $\grD$, thus we get
$K\subset I^{e} \cup I_\mathsf{de}$.
Suppose that $\grg_\mathsf{de} \in K \subset I_\mathsf{de}$:
then we get a contradiction since it would be $|\partial{K}|=2$.
Therefore we get $K\subset I^{e} \cup (I^\ast_\mathsf{de} \senza \{\grg_\mathsf{de}\})$ and $\gra \in \ol{I^{e}} \cup I^\ast_\mathsf{de}$. 
Such a subset $K$ cannot exist if $\gra \in \grD^{e}\senza \Supp(\grl)$,
otherwise it would be $K = \grD \senza\{\gra\}$ which intersects $\Supp(\grl)$.
We get then that every $-\gro_\gra^\vee$ with $\gra \in J(\grl)$ generates an extremal ray of $\calC(\wt{X}_\grl)$.

Suppose conversely that $\gra \not \in J(\grl)$. Then we can construct a connected subset $K\subset I^{e} \cup (I^\ast_\mathsf{de} \senza \{\grg_\mathsf{de}\})$ such that $\partial{K} = \{\gra\}$. If $\grg \in K\cap \grD^e$, consider the fundamental coweight $(\gro_\grg^K)^\vee$ associated to $\grg$ in the irreducible root subsystem associated to $K$: then we get
\[
	(\gro_\grg^K)^\vee = \sum_{\grb \in K}a_\grb \grb^\vee = \gro_\grg^\vee - m \gro^\vee_\gra,
\]
where $a_\grb>0$ are rational coefficients and where $m>0$ is an integer. Therefore $-\gro_\gra^\vee$ does not generate an extremal ray of $\calC(\wt{X}_\grl)$.
\end{proof}

\begin{prop}\label{Q-fattorialita}
The variety $\wt{X}_\grl$ is $\mathbb{Q}$-factorial if and only if the following conditions are fulfilled:
\begin{itemize}
\item[{\em i)}] $\Supp(\grl)$ is connected;
\item[{\em ii)}] If $\Supp(\grl)$ contains a unique element, then this element is an
  extreme of $\grD$;
\item[{\em iii)}] If $\grD$ is of type $\sfD$ or $\sfE$, then $\Supp(\grl)$ contains $\grg_\mathsf{de}$ and at least two simple roots adjacent to $\grg_\mathsf{de}$.
\end{itemize}
\end{prop}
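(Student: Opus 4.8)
As noted just before Lemma~\ref{raggi estremali2}, $\calC(\wt X_\grl)\subset\grL^\vee_\mQ$ has maximal dimension, so by Proposition~\ref{lem Q fattor} the variety $\wt X_\grl$ is $\mQ$-factorial if and only if $\calC(\wt X_\grl)$ has exactly $|\grD|=\mathrm{rk}\,G$ extremal rays. By Lemma~\ref{raggi estremali2} these rays are spanned by the simple coroots $\gra^\vee$ with $\gra\in\grD\senza\Supp(\grl)$ and by the $-\gro_\gra^\vee$ with $\gra\in J(\grl)$; since $\langle\gra,\gra^\vee\rangle=2>0\geq\langle\gra,-\gro_\grb^\vee\rangle$ for all $\gra,\grb\in\grD$, no simple coroot is a positive multiple of one of the $-\gro_\grb^\vee$, hence these $|\grD\senza\Supp(\grl)|+|J(\grl)|$ rays are pairwise distinct. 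Therefore $\wt X_\grl$ is $\mQ$-factorial if and only if $|J(\grl)|=|\Supp(\grl)|$; note in addition that the inequality $|J(\grl)|\geq|\Supp(\grl)|$ always holds, a full-dimensional strongly convex cone having at least $\dim$ extremal rays, so only the equality is in question.

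The next step is to compute $|J(\grl)|$ from its definition. Write $I=\grD\senza\Supp(\grl)$ and decompose $I=I^e\sqcup I_{\mathsf{de}}\sqcup I^{\mathrm{mid}}$ into connected components, where $I^{\mathrm{mid}}$ collects the components meeting no extreme of $\grD$ and the remaining notation is as in Lemma~\ref{raggi estremali2}. Starting from $J(\grl)=\bigl(\grD\senza(\ol{I^e}\cup I^\ast_{\mathsf{de}})\bigr)\cup\bigl(\grD^e\senza\Supp(\grl)\bigr)$, and using that $\partial I^e\subset\Supp(\grl)$, that $I^\ast_{\mathsf{de}}\subset I_{\mathsf{de}}$ lies in a component of $I$ disjoint from $\ol{I^e}$, and that every element of $\grD^e\senza\Supp(\grl)$ lies in $\ol{I^e}\cup I^\ast_{\mathsf{de}}$, a direct count gives
\[
|J(\grl)|-|\Supp(\grl)|=\bigl(|I_{\mathsf{de}}|-|I^\ast_{\mathsf{de}}|\bigr)+|I^{\mathrm{mid}}|+\bigl(|\grD^e\cap I^e|-|\partial I^e|\bigr)+[\,I_{\mathsf{de}}\neq\vuoto\,].
\]
The first summand is $\geq 0$ since $I^\ast_{\mathsf{de}}\subset I_{\mathsf{de}}$, the second and fourth clearly so, and the third because, by Table~\ref{tab:hsr} and the tree shape of $\grD$, every connected component $K$ of $I^e$ satisfies $|\partial K|\leq|\grD^e\cap K|$. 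Thus $\wt X_\grl$ is $\mQ$-factorial exactly when all four summands vanish.

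To conclude I would translate this vanishing into conditions i)--iii) by a case analysis on $\grD$. If $\grD$ is not of type $\sfD$ or $\sfE$ then $I_{\mathsf{de}}=I^\ast_{\mathsf{de}}=\vuoto$ and each component of $I^e$ is a terminal segment with one border root; vanishing is then equivalent to $I^{\mathrm{mid}}=\vuoto$ (every component of $I$ meets an extreme, i.e.\ $\Supp(\grl)$ is connected) together with the border roots of distinct components of $I^e$ being pairwise distinct (which, given connectedness, fails precisely when $\Supp(\grl)$ is a single non-extreme root), giving i) and ii), with iii) vacuous. If $\grD$ is of type $\sfD$ or $\sfE$ the branch node $\grg_{\mathsf{de}}$ is the delicate point: if $\grg_{\mathsf{de}}\notin\Supp(\grl)$ its component of $I$ meets zero, one, or two extremes, making $|I^{\mathrm{mid}}|$, $[I_{\mathsf{de}}\neq\vuoto]$, or $|\grD^e\cap I^e|-|\partial I^e|$ strictly positive respectively, so $\grg_{\mathsf{de}}\in\Supp(\grl)$ is forced; granting this we again have $I_{\mathsf{de}}=I^\ast_{\mathsf{de}}=\vuoto$, and distinctness of the border roots now forces $\grg_{\mathsf{de}}$ to have at least two neighbours in $\Supp(\grl)$ (otherwise two one-vertex legs adjacent to $\grg_{\mathsf{de}}$ occur as components of $I$ with common border $\grg_{\mathsf{de}}$), which together with connectedness is exactly iii). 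The main obstacle is precisely this $\sfD$/$\sfE$ bookkeeping, i.e.\ checking via the explicit Dynkin diagrams and Table~\ref{tab:hsr} that the auxiliary sets $I_{\mathsf{de}}$ and $I^\ast_{\mathsf{de}}$ entering $J(\grl)$ account exactly for the extra condition iii); the rest of the argument is the routine counting summarised in the displayed identity.
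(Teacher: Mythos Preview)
Your argument is correct and takes a genuinely different route from the paper's. Both proofs reduce to showing $|J(\grl)|=|\Supp(\grl)|$, but thereafter the paper does not compute $|J(\grl)|-|\Supp(\grl)|$ directly: for necessity it introduces the auxiliary weight $\grl'=\sum_{\gra\notin I^e\cup I^\ast_{\mathsf{de}}}\gro_\gra$, observes $J(\grl')=J(\grl)$, and squeezes to obtain $\Supp(\grl)=\Supp(\grl')$, i.e.\ $\grD\senza\Supp(\grl)=I^e\cup I^\ast_{\mathsf{de}}$; then it builds a surjection $F\colon J(\grl)\senza\{\gra_{\mathsf{de}}\}\to\Supp(\grl)$ sending each $\gra\in\grD^e\senza\Supp(\grl)$ to the unique border root of its component, and reads i)--iii) off from the forced bijectivity of $F$. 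Your four-term identity is more elementary and handles both directions symmetrically; the paper's map $F$ is more structural and avoids most of the $\sfD/\sfE$ casework that you flag as the ``main obstacle''.

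Two small inaccuracies to clean up. First, the bound $|\partial K|\leq|\grD^e\cap K|$ for a component $K$ of $I^e$ is a pure tree-combinatorics fact (a connected $K$ not containing $\grg_{\mathsf{de}}$ has $|\partial K|=1=|\grD^e\cap K|$, and one containing $\grg_{\mathsf{de}}$ has $|\partial K|=1$ while $|\grD^e\cap K|\geq 2$ since $K\neq I_{\mathsf{de}}$); Table~\ref{tab:hsr} is irrelevant here. Second, your parenthetical ``two one-vertex legs \dots\ occur as components of $I$'' is not literally correct outside $\sfD_4$: the components in question are the \emph{entire} legs not meeting $\Supp(\grl)$, which need not be single vertices; what matters is only that their borders coincide at $\grg_{\mathsf{de}}$. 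With these fixes your $\sfD/\sfE$ analysis goes through, and you should also state the easy converse there (that conditions i) and iii) force the border roots of the components of $I^e$ to be pairwise distinct, since at most one of them can equal $\grg_{\mathsf{de}}$).
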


\begin{proof}
By Proposition \ref{lem Q fattor} together with Lemma \ref{raggi estremali2} $\wt{X}_\grl$ is $\mathbb{Q}$-factorial if and only if $|\Supp(\grl)| = |J(\grl)|$.

Suppose that $\wt{X}_\grl$ is $\mathbb{Q}$-factorial.
Consider the dominant weight
$\grl' = \sum_{\gra \not \in I^e \cup I^\ast_\mathsf{de}} \gro_\gra$:
then $J(\grl') = J(\grl)$ and
\[
 |\grD| = |J(\grl)| + |\grD \senza \Supp(\grl)| \geq |J(\grl')| + |\grD \senza \Supp(\grl')| \geq |\grD|,
\]
which implies $\Supp(\grl) = \Supp(\grl')$. This shows $\grD \senza \Supp(\grl) = I^e \cup I^\ast_\mathsf{de}$, and we get the following decomposition of $J(\grl)$:
\[
	J(\grl) \cap \Supp(\grl) = \grD \senza (\ol{I^e}\cup I^\ast_\mathsf{de}), \qquad	\quad
	J(\grl) \senza \Supp(\grl) = \grD^e \senza \Supp(\grl).
\]

If $I_\mathsf{de} \neq \vuoto$, set $I_\mathsf{de} \cap \grD^e = \{\gra_\mathsf{de}\}$.
Define a surjective map $F: J(\grl) \senza \{\gra_\mathsf{de}\} \lra \Supp(\grl)$ as follows:
$F$ is the identity on $J(\grl) \cap \Supp(\grl)$,
while if $\gra \in J(\grl) \senza \Supp(\grl)$ consider the
connected component $K\subset \grD\senza \Supp(\grl)$ containing $\gra$ and
define $F(\gra)$ by the relation $\partial{K} = \{F(\gra)\}$:
since $\gra \neq \gra_\mathsf{de}$, it must be $|\partial{K}|=1$.
Therefore $F$ is well defined and it is surjective
since $\Supp(\grl) \senza J(\grl) = \partial{I^e}$.
Therefore $\grD \senza \Supp(\grl) = I^e$ and we get i).
Being surjective, $F$ has to be injective as well; this easily implies both ii) and iii).

Suppose conversely that $\Supp(\grl)$ is connected,
or equivalently that $\grD\senza \Supp(\grl) = I^e$:
then ii) and iii) imply $|\grD^e \senza \Supp(\grl)| = |\partial{I^e}|$.
This shows that
$\wt{X}_\grl$ is $\mathbb{Q}$-factorial, since then
$|J(\grl)| + |\grD\senza \Supp(\grl)| = |\grD|$.
\end{proof}

\begin{cor} \label{cor:raggi estremali}
If $\wt{X}_\grl$ is $\mathbb{Q}$-factorial, the extremal rays of $\calC(\wt{X}_\grl)$ are generated by:
\begin{itemize}
	\item[{\em i)}] the coroots $\alpha^{\vee}$ with $\alpha\in \grD\senza \Supp(\grl)$,
	\item[{\em ii)}] the coweights $-\omega_{\alpha}^{\vee}$ with $\alpha\in
				\Supp(\grl)^\circ \cup \big(\grD^e \senza \Supp(\grl)\big)$.
\end{itemize}
\end{cor}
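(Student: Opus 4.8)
The plan is to read the statement off from Lemma~\ref{raggi estremali2} combined with the internal computation carried out in the proof of Proposition~\ref{Q-fattorialita}. By Lemma~\ref{raggi estremali2}, for an arbitrary dominant $\grl$ the extremal rays of $\calC(\wt{X}_\grl)$ are generated by the coroots $\gra^\vee$ with $\gra\in\grD\senza\Supp(\grl)$ and by the anti-dominant coweights $-\gro_\gra^\vee$ with $\gra\in J(\grl)$. Part i) of the corollary is then immediate, and the whole content of the statement is the identification, valid when $\wt{X}_\grl$ is $\mathbb{Q}$-factorial, of
\[
	J(\grl) = \Supp(\grl)^\circ \cup \big(\grD^e \senza \Supp(\grl)\big).
\]

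First I would invoke the proof of Proposition~\ref{Q-fattorialita}: under the $\mathbb{Q}$-factoriality hypothesis it was shown there that $\grD\senza\Supp(\grl)=I^e$. Moreover, since in that situation $\Supp(\grl)$ is connected and, in types $\sfD$ and $\sfE$, contains $\grg_\mathsf{de}$, there is no connected component of $\grD\senza\Supp(\grl)$ through $\grg_\mathsf{de}$, so $I_\mathsf{de}=I^\ast_\mathsf{de}=\vuoto$. Substituting these equalities into the definition of $J(\grl)$ gives $J(\grl)=\big(\grD\senza\ol{I^e}\big)\cup\big(\grD^e\senza\Supp(\grl)\big)$. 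It then remains only to rewrite the first piece using the combinatorial definitions: since $\ol{I^e}=I^e\cup\partial I^e$ and $\grD\senza\Supp(\grl)=I^e$,
\[
	\grD\senza\ol{I^e} = \big(\grD\senza I^e\big)\senza\partial I^e = \Supp(\grl)\senza\partial\big(\grD\senza\Supp(\grl)\big) = \Supp(\grl)^\circ,
\]
the last equality being precisely the definition of the interior. Plugging this back gives the asserted description of $J(\grl)$, hence of the extremal rays.

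This is essentially a bookkeeping argument, so I do not expect a genuine obstacle. The only point that requires some care is that one must use not merely the statement of Proposition~\ref{Q-fattorialita} but the sharper identity $\grD\senza\Supp(\grl)=I^e$ (equivalently $I^\ast_\mathsf{de}=\vuoto$) obtained in the course of its proof, and then match correctly the definitions of $\ol{(\,\cdot\,)}$, $\partial(\,\cdot\,)$ and $(\,\cdot\,)^\circ$; once this is done the disjointness of the two families in ii), and of them from the coroots in i), is automatic because $\Supp(\grl)^\circ\subset\Supp(\grl)$ while $\grD^e\senza\Supp(\grl)\subset\grD\senza\Supp(\grl)$.
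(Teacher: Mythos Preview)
Your argument is correct and is exactly what the paper intends: the corollary is stated without proof because it follows immediately from Lemma~\ref{raggi estremali2} once one uses, from the proof of Proposition~\ref{Q-fattorialita}, that $\grD\senza\Supp(\grl)=I^e$ (and hence $I_\mathsf{de}=I^\ast_\mathsf{de}=\vuoto$), which reduces $J(\grl)$ to $\Supp(\grl)^\circ\cup(\grD^e\senza\Supp(\grl))$ by the same unwinding of definitions you carried out.
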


\subsection{Smoothness}\label{sezione smoothness}

Suppose that $\grS=\{\grl,\grl_1,\ldots,\grl_s\}$ is a simple set
of dominant weights, where $\grl$ is the maximal one.
In this section we will prove the following generalization of Theorem B.

\begin{teo} \label{smooth Xsigma} The variety $X_{\Sigma}$ is smooth if and only if $X_{\lambda}$ is  normal, $\mathbb{Q}$-factorial and every connected component of  $\grD\senza\Supp(\lambda)$ has type $\sfA$.
\end{teo}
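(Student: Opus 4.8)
The plan is to combine the criterion of normality of Theorem~\ref{teo:normalita}, the $\mQ$-factoriality criterion of Proposition~\ref{Q-fattorialita}, and Timashev's smoothness criterion for linear projective compactifications of reductive groups into a single statement about $X_\Sigma$. The first observation is that smoothness forces normality, so we may assume $X_\Sigma$ normal from the start; then by the discussion in Section~\ref{sez:normalita} the normalization morphism $r\colon \wt X_\grl\to X_\Sigma$ is an isomorphism, and so $X_\Sigma\simeq\wt X_\grl$. This reduces the statement entirely to a criterion for smoothness of $\wt X_\grl$ in terms of $\Supp(\grl)$, independent of $\Sigma$ (as long as $\Sigma$ is simple with maximal element $\grl$ and satisfies $\Sigma\supset\YB(\grl)$). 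In particular the ``only if'' direction is the easy one once this reduction is in place, since smoothness of a variety implies smoothness of its normalization, i.e.\ $X_\Sigma$ smooth forces $\wt X_\grl$ smooth, hence $\wt X_\grl$ is in particular $\mQ$-factorial, hence conditions i)--ii) of Proposition~\ref{Q-fattorialita} hold; and condition iii) of Theorem~B (every component of $\grD\senza\Supp(\grl)$ of type $\sfA$) must be extracted from Timashev's necessary condition.

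For the ``if'' direction — assuming $X_\grl$ normal, $\mQ$-factorial, and all components of $\grD\senza\Supp(\grl)$ of type $\sfA$ — I would invoke Timashev's smoothness criterion from \cite{Ti} for a simple linear projective compactification of a reductive group. The point is that $\wt X_\grl$ is a simple normal compactification of $G_\mathrm{ad}$ whose colored cone $(\calC(\wt X_\grl),\calD(\wt X_\grl))$ we have computed explicitly in Section~\ref{sez smooth-notaz}: $\rho(\calD(\wt X_\grl)) = \grD^\vee\senza\Supp(\grl)^\vee$ and $\calC(\wt X_\grl)$ is generated by these coroots together with the negative Weyl chamber. Timashev's criterion translates smoothness of such a compactification at the closed orbit into a combinatorial condition on this cone together with the behaviour of the Levi subgroup $L$ associated to $\grD\senza\Supp(\grl)$ — essentially, that the relevant toric data is smooth (which is where $\mQ$-factoriality, i.e.\ simpliciality of $\calC(\wt X_\grl)$, enters, via Corollary~\ref{cor:raggi estremali}) and that the ``boundary Levi'' acts in a way compatible with smoothness, which is exactly where the type-$\sfA$ condition on the components of $\grD\senza\Supp(\grl)$ comes in. The extremal ray description of Corollary~\ref{cor:raggi estremali} tells us that under $\mQ$-factoriality the extremal rays are the $\gra^\vee$ for $\gra\in\grD\senza\Supp(\grl)$ and the $-\gro_\gra^\vee$ for $\gra\in\Supp(\grl)^\circ\cup(\grD^e\senza\Supp(\grl))$; one then checks that these generators form part of a $\mZ$-basis of $\grL^\vee$, which is the local smoothness condition at the torus-fixed point of the closure of the closed orbit.

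The main obstacle is making precise and verifying the local smoothness computation at the closed orbit $Y\simeq G/B\times G/B$: one needs to control not just the toric (multidimensional) part governed by $\calC(\wt X_\grl)$, but the full étale-local structure of $\wt X_\grl$ along $Y$, which near a generic point of $Y$ looks like $G\times^{B^-\times B}\big(\text{(a torus-like slice)}\times(\text{Levi correction})\big)$; the type-$\sfA$ hypothesis is what guarantees the Levi $L$ contributes only an affine space. Concretely I expect the argument to reduce to: (1) $\mQ$-factoriality $\Leftrightarrow$ $\calC(\wt X_\grl)$ simplicial (Proposition~\ref{lem Q fattor}); (2) simplicial $+$ the lattice generation check $\Rightarrow$ the toric part is smooth; (3) the normality of $X_\grl$ plus type-$\sfA$ components $\Rightarrow$ the non-toric directions are smooth, via Timashev's explicit formulas — and showing this last implication genuinely uses Theorem~\ref{teo:normalita} (not merely normality as an abstract hypothesis) is the delicate point, since one must feed the structure of $\YB(\grl)$ and the classification of which simple roots lie in $\Supp(\grl)$ into Timashev's conditions. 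Finally, reconciling with Theorem~B: conditions i) and ii) of Theorem~B together are equivalent to i), ii), iii) of Proposition~\ref{Q-fattorialita} (this uses that under $(\star)$ plus connectedness, the type-$\sfD,\sfE$ condition on $\grg_\mathsf{de}$ is subsumed), and condition iii) of Theorem~B is exactly the type-$\sfA$ clause, so Theorem~B follows from Theorem~\ref{smooth Xsigma} by setting $\Sigma=\{\grl\}$.
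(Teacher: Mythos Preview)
Your overall framework is right---reduce to $\wt X_\grl$ via normality and then invoke Timashev's criterion (Theorem~\ref{smooth-timashev})---but you have the two directions reversed in difficulty, and this leads to a genuine gap.

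You call the ``only if'' direction the easy one and extract from it only $\mQ$-factoriality of $\wt X_\grl$ and the type-$\sfA$ condition. But the theorem asks for \emph{normality of $X_\grl$}, not of $X_\Sigma$; these are different, since $\{\grl\}\subset\Sigma$ and $X_\Sigma$ may well be normal (indeed smooth) while $X_\grl$ is not. Equivalently, you must show that smoothness of $\wt X_\grl$ forces $\grl$ to satisfy condition $(\star)$, i.e.\ $\YB(\grl)=\vuoto$. This does not fall out of any general principle; it requires a concrete computation. The paper argues by contradiction: if $\gra_L\in\Supp(\grl)$ but $\gra_S\notin\Supp(\grl)$, one takes the connected component $K$ of $\grD\senza\Supp(\grl)$ containing $\gra_S$, writes the scaled fundamental coweight $(l+1)(\gro^K_l)^\vee$ as an integer combination of simple coroots in $K$, and shows this forces either a lattice violation against Timashev's condition~ii) (because $\gro_{\gra_L}^\vee$ would fail to be integral in the basis) or, in the $\sfB_2$ case, a violation of condition~iii). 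Without this argument the necessary direction is incomplete.

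Conversely, for the ``if'' direction you correctly identify that one must verify Timashev's conditions i)--iii), and you are right that the normality hypothesis $(\star)$ is genuinely used: in checking condition~ii) (that the extremal generators of $\calC(\wt X_\grl)$ form a $\mZ$-basis of $\grL^\vee$) one needs $\langle\grb,\gra_{j+1}^\vee\rangle=-1$ along the boundary of each component of $\grD\senza\Supp(\grl)$, and this is exactly what $(\star)$ guarantees. So your instinct that $(\star)$ feeds into the lattice check is correct; you just placed it in the wrong direction of the proof.
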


\begin{cor}
$X_{\Sigma}$ is smooth if and only if $X_{\lambda}$ is smooth.
\end{cor}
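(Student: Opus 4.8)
The plan is to reduce the theorem to a smoothness criterion for the normalization $\wt X_\grl$ and then to make the criterion of \cite{Ti} explicit by means of the colored cone computed in subsection~\ref{sez smooth-notaz}. Since smoothness forces normality, Theorem~\ref{teo:normalita} shows that a smooth $X_\Sigma$ satisfies $\Sigma\supseteq\YB(\grl)$, so that the normalization map $r\colon\wt X_\grl\to X_\Sigma$ is an isomorphism; conversely, if $\grl$ satisfies $(\star)$ then $\YB(\grl)=\vuoto$, hence $\Sigma\supseteq\YB(\grl)$ automatically and again $X_\Sigma\isocan\wt X_\grl$. Recalling from Theorem~A that ``$X_\grl$ is normal'' is equivalent to ``$\grl$ satisfies $(\star)$'', the theorem is therefore equivalent to the assertion that \emph{$\wt X_\grl$ is smooth if and only if $\grl$ satisfies $(\star)$, $\wt X_\grl$ is $\mathbb{Q}$-factorial, and every connected component of $\grD\senza\Supp(\grl)$ is of type $\sfA$}; here I may assume $\Phi$ irreducible, since along the decomposition $\grD=\grD_1\cup\cdots\cup\grD_n$ one has $\wt X_\grl=\wt X_{\grl_1}\times\cdots\times\wt X_{\grl_n}$ and all three conditions are componentwise. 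Granting this assertion the corollary is immediate, its three conditions depending only on $\grl$: taking $\Sigma=\{\grl\}$ shows that $X_\Sigma$ is smooth if and only if $X_\grl$ is.

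To prove the displayed assertion I would invoke the smoothness criterion of \cite{Ti} for a linear projective compactification of a reductive group. By subsection~\ref{sez smooth-notaz}, $\wt X_\grl$ is the simple normal $G\times G$-compactification of $G_{\mathrm{ad}}$ whose colored cone $\calC(\wt X_\grl)$ is generated by $\grD^\vee\senza\Supp(\grl)^\vee$ together with the negative Weyl chamber and satisfies $\rho\big(\calD(\wt X_\grl)\big)=\grD^\vee\senza\Supp(\grl)^\vee$; having a unique closed orbit, $\wt X_\grl$ is smooth if and only if it is smooth along that orbit, equivalently if and only if a suitable affine chart meeting it is smooth, and it is this chart that Timashev's criterion controls in terms of the above data. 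Such a chart can be smooth only if $\calC(\wt X_\grl)$ is simplicial, i.e.\ (Proposition~\ref{lem Q fattor}) only if $\wt X_\grl$ is $\mathbb{Q}$-factorial; assuming this, Corollary~\ref{cor:raggi estremali} describes the extremal rays of $\calC(\wt X_\grl)$ as those generated by the coroots $\gra^\vee$ with $\gra\in\grD\senza\Supp(\grl)$ and by the coweights $-\gro_\gra^\vee$ with $\gra\in\Supp(\grl)^\circ\cup(\grD^e\senza\Supp(\grl))$. Feeding this description into Timashev's criterion and running through the finitely many configurations of $\Supp(\grl)$ allowed by $\mathbb{Q}$-factoriality in each irreducible type, I would check that the chart is smooth precisely when, in addition, (a) every connected component of $\grD\senza\Supp(\grl)$ is of type $\sfA$ --- the case in which the coroots attached to those components make the toric factor of the chart an affine space --- and (b) $\grl$ satisfies $(\star)$; when $(\star)$ fails one finds that the colored cone of $\wt X_\grl$ is not a smooth colored cone, the obstruction being carried by the image $\gra_S^\vee$ of the color attached to the short simple root adjacent to a long one, as is already visible in types $\sfB_2$ and $\sfG_2$.

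The step I expect to be the main obstacle is this last, type-by-type, translation of Timashev's criterion into the combinatorics of $\Supp(\grl)$. In the non-simply-laced components one has to follow carefully how short and long (co)roots sit in the coweight lattice $\grL^\vee$, which is exactly what makes $(\star)$ --- rather than just $\mathbb{Q}$-factoriality and the type-$\sfA$ condition --- necessary for smoothness; a convenient cross-check is that every failure of $(\star)$, in types $\sfB$, $\sfC$, $\sfF_4$ or $\sfG_2$, yields a concrete singular point of the relevant chart. In types $\sfD$ and $\sfE$ one must also handle the trivalent node, already responsible for the subtleties in Proposition~\ref{Q-fattorialita} and in the definition of $J(\grl)$, but which adds no obstruction to smoothness beyond $\mathbb{Q}$-factoriality once the latter is imposed. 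Assembling these verifications yields the displayed assertion, and with it the theorem and its corollary.
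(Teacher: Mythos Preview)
Your proposal is correct and follows the same route as the paper. In the paper the corollary is not given a separate proof: it is an immediate consequence of Theorem~\ref{smooth Xsigma}, since the three conditions appearing there---normality of $X_\grl$, $\mathbb{Q}$-factoriality, and the type~$\sfA$ condition on $\grD\senza\Supp(\grl)$---depend only on $\grl$, so specializing to $\Sigma=\{\grl\}$ gives the equivalence. You make exactly this observation in your first paragraph.

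The remainder of your proposal is really a sketch of a proof of Theorem~\ref{smooth Xsigma} itself, and here too your strategy coincides with the paper's: reduce to the normalization $\wt X_\grl$ and apply Timashev's criterion (recorded in the paper as Theorem~\ref{smooth-timashev}). The paper's execution is somewhat more explicit than your outline. For necessity of $(\star)$ it does not do a full type-by-type check but rather a single uniform computation: assuming $\gra_L\in\Supp(\grl)$ and $\gra_S\notin\Supp(\grl)$, it writes down a relation among the primitive generators $\gra_i^\vee$ and $\gro_{\gra_l}^\vee$ of the extremal rays which, by condition~ii) of Theorem~\ref{smooth-timashev}, would force $\gro_{\gra_L}^\vee\notin\grL^\vee$ (or, in the $\sfB_2$ tail, violate condition~iii)). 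For sufficiency it verifies conditions~i)--iii) of Theorem~\ref{smooth-timashev} directly, again via an explicit coweight identity rather than a case analysis. So where you write ``running through the finitely many configurations'', the paper instead produces one short coweight computation in each direction; you may wish to tighten your sketch accordingly.
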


To prove Theorem~\ref{smooth Xsigma}, we will make use of a characterization of smoothness for arbitrary group compactifications given by D.~Timashev in \cite{Ti}. For convenience, we will use a generalization of it which can be found in \cite{Ru} in the more general context of symmetric spaces. We recall it in the case of a simple group compactification.

\begin{teo}[see {\cite[Theorem~2.2]{Ru}, \cite[Theorem 9]{Ti}}]\label{smooth-timashev}
The variety $\wt{X}_{\grl}$ is smooth if and only if the following conditions are fulfilled:
\begin{itemize}
\item[{\em i)}] All connected components of $\grD \senza \Supp(\lambda)$ are of type $\sfA$ and there
  are no more than $|\Supp(\lambda)|$ of them.
\item[{\em ii)}]  The cone $\calC(\wt{X}_\grl)$ is simplicial and it is generated by a
  basis of the coweight lattice $\Lambda^{\vee}$.
\item[{\em iii)}]  One can enumerate the simple roots in order of their positions
  at Dynkin diagrams of connected components
  $I_k = \{\alpha_{1}^{k},\ldots,\alpha_{n_{k}}^{k}\}$ of $\grD \senza \Supp(\grl)$ ,
  $k=1,\ldots,n$, and partition the basis of the free semigroup
  $\calC(\wt{X}_\grl)^{\vee}\cap \mathbb{Z}\grD$ into subsets
  $\{\pi_{1}^{k},\ldots,\pi_{n_{k}+1}^{k}\}$, $k=1,\ldots,p$, $p\geq n$,
  in such a way that
  $\langle\pi_{j}^{k}, (\alpha_{i}^{h})^\vee \rangle = \delta_{i,j}\delta_{h,k}$
  and $\pi_{j}^{k} - \frac{j}{n_{k}+1} \pi_{n_{k}+1}^{k}$
  is the $j$-th fundamental weight of the root system generated by
  $\{\alpha_{1}^{k},...,\alpha_{n_{k}}^{k}\}$ for all $j,k$.
\end{itemize}
\end{teo}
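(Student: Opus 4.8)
The plan is to reconstruct the argument of \cite{Ti} and \cite{Ru}, which rests on the local structure theory of spherical embeddings. First I would reduce the global smoothness of $\wt X_\grl$ to smoothness along the closed orbit $Y$: since the singular locus is closed and $G\times G$-stable, if it were nonempty it would contain $Y$, which lies in the closure of every orbit; hence $\wt X_\grl$ is smooth if and only if it is smooth at the points of $Y$. By the local structure theorem there is a $B\times B^-$-stable affine open subset $X_0\subset\wt X_\grl$ meeting $Y$, together with an isomorphism $X_0\isocan R_u(Q)\times Z$, where $Q\subset G\times G$ is the parabolic whose colors are recorded by $\calD(\wt X_\grl)$, so that its Levi type is governed by $\grD\senza\Supp(\grl)$, and where $Z$ is an affine slice carrying an action of $T_{\mathrm{ad}}$ together with the Levi of $Q$. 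Smoothness of $\wt X_\grl$ along $Y$ is thus equivalent to smoothness of $Z$ at its unique closed point.

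Next I would analyse $Z$ through the colored cone. If there were no colors, $Z$ would be the affine toric variety $\mathrm{Spec}\,\mk[\calC(\wt X_\grl)^\vee\cap\mZ\grD]$, which is smooth exactly when $\calC(\wt X_\grl)$ is simplicial and generated by a basis of the coweight lattice $\grL^\vee$; this is condition ii). The colors, indexed by $\grD\senza\Supp(\grl)$, perturb this picture: the Levi of $Q$ acts on $Z$, and each connected component $I_k$ of $\grD\senza\Supp(\grl)$ contributes a block built from the flag geometry of the corresponding simple factor. I expect to show that this block is a smooth affine cone precisely when the factor is of type $\sfA$, where the relevant object is an affine cone over a single Grassmannian-type direction, and that in every other type the block acquires determinantal singularities; this yields the type-$\sfA$ requirement in condition i).

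Finally I would match the combinatorial data. The bound ``no more than $|\Supp(\grl)|$ of them'' and the partition in condition iii) are exactly the bookkeeping that forces the semigroup $\calC(\wt X_\grl)^\vee\cap\mZ\grD$ to be free and makes its generators restrict, block by block, to the fundamental weights of each type-$\sfA$ chain $I_k=\{\gra_1^k,\dots,\gra_{n_k}^k\}$: the relations $\langle\pi_j^k,(\gra_i^h)^\vee\rangle=\delta_{i,j}\delta_{h,k}$ together with the fundamental-weight condition single out the coordinates of $Z$ and identify it with an affine space. The main obstacle will be precisely this last analysis of how the colors embed $Z$, that is, proving that freeness of the weight semigroup with the stated pairing is equivalent to smoothness of the slice. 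This requires an explicit description of the $B\times B^-$-semiinvariant functions generating the coordinate ring of $Z$ and of their weights, and constitutes the technical heart of the criterion.
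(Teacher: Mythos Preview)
The paper does not prove this theorem; it is quoted verbatim from \cite[Theorem~2.2]{Ru} and \cite[Theorem~9]{Ti} and used as a black box in the proof of Theorem~\ref{smooth Xsigma}. There is therefore no ``paper's own proof'' to compare your proposal against. Your outline is a reasonable summary of the strategy in those references---reduction to the closed orbit, local structure theorem, and analysis of the affine slice $Z$ via the colored cone---but none of this appears in the present paper, which simply invokes the result.
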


\begin{proof}[Proof of Theorem~\ref{smooth Xsigma}.]
First, we prove that the conditions are necessary;
since we only have to prove that $X_\grl$ is normal,
we may assume that $\grD$ is non-simply laced.
By Theorem~\ref{smooth-timashev} i), $\Supp(\grl)$ contains
at least one of the two simple roots $\alpha_S$, $\gra_L$;
suppose that $\Supp(\grl)$ contains $\alpha_L$ but not $\gra_S$.
Denote $K =\{\gra_1,\ldots,\gra_l\} \subset \grD\senza \Supp(\lambda)$ the connected component which contains $\gra_S$ and number its simple roots starting from $\gra_S$: therefore $\gra_1 = \gra_S$ and $\gra_l \in \grD^e$, moreover $\ol{K}$ is either of type ${\sf C}_{l+1}$ or of type $\sf{G}_2$.
Set $\gro^\vee = (l+1)(\gro^K_l)^\vee$, where $(\gro^K_l)^\vee$ is the fundamental coweight associated
to $\gra_l$ in the root subsystem $\Phi_K$ associated to $K$; then
\[
	\gro^\vee = \sum_{i=1}^{l} i \gra^\vee_{i} =
(l+1) \gro^\vee_{\gra_l} -m \gro_{\gra_L}^\vee.
\]
where $m = 2$ if $\ol{K}$ is of type ${\sf C}_{l+1}$ (with $l\geq 1$) and $m=3$ if $\ol{K}$ is of type ${\sf G}_2$.

If $\ol{K}$ is not of type ${\sf B}_2$, then $\grD$ is either of type ${\sf C}_r$ (with $r>2$) or of type ${\sf F}_4$ or of type ${\sf G}_2$ and every simple coroot $\grb^\vee \in \grD^\vee$ is a primitive element in $\Lambda^{\vee}$ (i.e. there does not exist $\pi^\vee\in \grL^\vee$ which satisfies $t\pi^\vee = \grb^\vee$ with $t>1$):
therefore by Lemma \ref{raggi estremali2} together with Theorem \ref{smooth-timashev} ii) $\{\gra^\vee_1, \ldots, \gra^\vee_l, \gro^\vee_{\gra_l} \}$ is part of a basis of $\grL^\vee$ and we get a contradiction since then the equality above would imply $\gro_{\gra_L}^\vee\not \in \grL^\vee$.
Otherwise $\ol{K}$ is of type ${\sf B}_2$, thus $\grD$ is of type ${\sf B}_r$ and $\frac{1}{2}\gra_S^\vee \in \grL^\vee$: then we get a contradiction since by Theorem \ref{smooth-timashev} iii) there exists $\pi \in \calC(\wt{X}_\grl)^{\vee}\cap \mathbb{Z}\grD$ such that $\langle \pi, \gra_S^\vee \rangle = 1$.

Let's prove now that conditions of Theorem~\ref{smooth-timashev}
are verified if $X_{\lambda}$ is normal, $\mathbb{Q}$-factorial
and $\grD\senza \Supp(\lambda)$ has type $\sfA$.
Set $N = \calC(\wt{X}_\grl) \cap \grL^\vee$ the monoid generated by the primitive elements of the extremal rays of $\calC(\wt{X}_\grl)$.

To prove condition i), it is enough to notice as in Proposition \ref{Q-fattorialita} that,
since $\Supp(\grl)$ is connected, we have $\grD \senza \Supp(\grl) = I^e$ and the number of its connected components equals $|\grD^e \senza \Supp(\grl)| \leq |J(\grl)| = |\Supp(\grl)|$.

To prove condition ii), let's show that,
if $\grb \in \grD \senza J(\grl) = \ol{I^e}\senza \grD^e$,
then $-\omega_{\grb}^{\vee} \in N$. Denote $I =\{\gra_1,\ldots,\gra_l\} \subset \grD\senza \Supp(\lambda)$ the connected component which contains $\grb$ in its closure and number its simple roots starting from the extreme of $I$ which is not an extreme of $\grD$; therefore $\gra_l \in \grD^e$.
Let $j$ be such that $\grb = \gra_j$ or set $j = 0$ if $\grb \in \Supp(\grl)$.
Set $K=\{\gra_{j+1},\ldots,\gra_{l}\}$ and set $\gro^\vee = (l-j+1)(\gro^K_l)^\vee$, where $(\gro^K_l)^\vee$ is the fundamental weight associated
to $\gra_l$ in the root subsystem $\Phi_K$ associated to $K$; then
\[
	\gro^\vee = \sum_{i=1}^{l-j} i \gra^\vee_{j+i} =
(l-j+1)\gro^\vee_{\gra_l} + \langle \grb,\gra_{j+1}^\vee \rangle \gro_{\grb}^\vee.
\]
Since $X_\grl$ is normal, by Theorem A we get $\langle \grb,\gra_{j+1}^\vee \rangle = -1$;
therefore by Corollary \ref{cor:raggi estremali} $-\gro_{\grb}^\vee \in N$.

Finally let's show that condition iii) holds.
Suppose that $K = \{\gra_1, \ldots, \gra_l \} \subset \grD\senza \Supp(\grl)$ is a connected component, where the simple roots in $K$ are numbered starting from the extreme of $K$ which is not an extreme of $\grD$, and define
\[
\pi_i^K =
\left\{ \begin{array}{ll}
		(\gra_i^\vee)^\ast & \textrm{ if $i\leq l$} \\
		(-\gro^\vee_{\gra_l})^\ast & \textrm{ if $i=l+1$}
\end{array} \right.
\]
where, if $\{v_1, \ldots, v_r\}$ is a basis of $\grL^\vee$,
$\{v_1^\ast, \ldots, v_r^\ast\}$ denotes the dual basis of $\grL$.
Therefore, if $\omega_{j}^{K}$ is the $j$-th fundamental weight of
$\Phi_{K}$, we have
$\omega_{j}^{K} = \pi^K_j - \frac{j}{l+1} \pi^K_{l+1}$.
\end{proof}

\section{Remarks and generalizations}

In this section we will consider the more general situation of compactifications of
symmetric varieties.

Let $G$ be as before and $\grs:G\to G$ an
involution of $G$. We denote by $H^\circ$ the subgroup of points fixed by
$\grs$ and by $H$ its normalizer. The notation is not completely coherent
with those of previous sections:
$G$ plays now the role that $G\times G$ played before, while $H^\circ$ has now the role that the diagonal of $G\times G$ had before.

Let $\Omega^+$ be the set of dominant weights $\grl$ such that
$\VV{\grl}$ has a non-zero vector fixed by $H^\circ$ and $\Omega$ the
sublattice of $\grL$ generated by $\Omega^+$. The monoid $\Omega^+$
(resp.\ the lattice $\Omega$) is in a natural way the set of
dominant weights (resp.\ the set of weights) of a (possibly non-reduced)
root system $\tPhi$, which is called the {\em restricted root system}.
For $\grl \in \Omega^+$ we can consider
the (unique) point $x_\grl \in \mP(\VV{\grl})$ fixed by $H$ and define $X_\grl$
as the closure of the $G$-orbit of $x_\grl$ in $\mP(\VV{\grl})$.

Proposition~\ref{prp:supporto} generalizes to this more general situation
without any further comment.

\subsection{Normality of $X_\grl$ and the closure of a maximal torus orbit.}
Let $T\subset G$ be a maximal torus such that the dimension of $TH$ is maximal
and let $Z_\grl = \ol{T\,x_\grl} \subset X_\grl$.
In \cite{Ru}, it is proved that when $X_\grl$ is normal then $Z_\grl$ also is normal.
The converse of this result does not hold in general.
Indeed $Z_\grl$ is always normal in the case of the $G\times G$-compactification of $G_\mathrm{ad}$.

\subsection{Generalization to symmetric varieties: normality}\label{ssez:simmetrichenormalita}
The wonderful compactification has been defined in the more general
situation of symmetric varieties
and the description of the normalization of $X_\grl$
generalizes thanks to the results contained in
\cite{CM} and \cite{CDM} (which generalize \cite{Ka} and \cite{DC}).
In particular, Lemma~\ref{lem:general-normality} holds here in general.
However, in the case of symmetric varieties we do not have a
clear description of the multiplication of sections as in
Lemma~\ref{lem:coefficientimatriciali}. In particular, we have no
analogue of Proposition~\ref{prp:normalita}.

One may wonder whether the normality of $X_\grl$ is equivalent to the
analogous combinatorial condition on the weight $\grl$, that is, $\grl$ satisfies condition $(\star)$ w.r.t.\ the root
system $\tPhi$; here is a counterexample.

Let $G$ be of type ${\sf B}_2$ and let $\grs$ be the involution of type B~I:
thus $G/H \simeq \mathrm{SO}(5)/\mathrm{S}\big(\mathrm{O}(3)\times \mathrm{O}(2)\big)$ and $\tDelta=2\grD$. 
Consider $\grl=2\gro_1\in\grO^+$; then $X_\grl$ is a normal
embedding of $G/H$.

Denote by $\leq_\grs$ the dominance order
w.r.t.\ the root system $\tPhi$ and suppose that $X_\grl$ is normal.
Then $\grl$ satisfies
\begin{quote}
\emph{for all $\mu\in\grO^+$ such that $\mu\leq_\grs\grl$ there
  exists $n\in\mN$ such that $\VV{\mu+(n-1)\grl}\subset
  \mathrm S^n(\VV{\grl})$.}
\end{quote}
If one assumes that the multiplication map is as generic as possible, then
also the converse is true.

\subsection{Generalization to symmetric varieties: smoothness}

In the setting of normal compactifications of symmetric varieties $G/H^\circ$, fix a maximal torus $T$ such that $TH^\circ$ has maximal dimension and a Borel subgroup $B\supset T$ such that $BH^\circ \subset G$ is dense. If $X$ is a simple normal compactification of $G/H$, denote $\calD(X)$ the set of $B$-stable and not $G$-stable prime divisors of $X$ which contain the closed orbit. Denote $\rho : \calD(X) \to \grO^\vee$ the map defined by the evaluation of functions;
by \cite[Proposition 1]{Vu} $\rho(\calD(X))$ is a basis of the restricted coroot system $\wt{\Phi}^\vee$.
Since the map $\rho$ is not always injective, following the criterion of $\mathbb{Q}$-factoriality in \cite{Br2} in order to generalize Proposition~\ref{Q-fattorialita} we only need to assume that $\rho$ is injective on $\calD(X)$, and the proof is the same.
Such proposition is true also for compactifications of $G/H^\circ$, and not only of $G/H$, since $\mathbb Q$-factoriality concerns no integrality questions.

Theorem~\ref{smooth Xsigma} also can be generalized to this setting with the same proof, but we do not have anymore the equivalence between property $(\star)$ and the normality of $X_\grl$. Thus the theorem has to be reformulated as follows (recall that a simple normal spherical variety is always quasi-projective).

\begin{teo}\label{smooth general}
A  simple normal compactification $X$ of $G/H$  is smooth if and only if it is $\mathbb{Q}$-factorial,  $\grD\senza\rho(\calD(X))$ satisfies $(\star)$ and every connected component of $\rho(\calD(X))$ has type $\sfA$.
\end{teo}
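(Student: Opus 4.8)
The plan is to reduce the statement to the smoothness criterion of Timashev and Ruzzi and to repeat, essentially verbatim, the argument used for Theorem~\ref{smooth Xsigma}, replacing the root system $\Phi$ of $G\times G$ by the (possibly non-reduced) restricted root system $\tPhi$ of $G/H$, the subset $\Supp(\grl)$ by $\tDelta\senza\rho(\calD(X))$, and the lattices $\grL,\grL^\vee$ by the restricted lattices $\grO,\grO^\vee$. As recalled before the statement, $\rho(\calD(X))$ is a subset of the simple restricted coroots, the colored cone $\big(\calC(X),\calD(X)\big)$ --- with $\calC(X)\subset\grO^\vee_\mQ$ generated by $\rho(\calD(X))$ together with the negative restricted Weyl chamber --- determines $X$, the generalization of Proposition~\ref{Q-fattorialita} (via Brion's criterion \cite{Br2}; $\rho$ is injective on $\calD(X)$ for $G/H$) makes $\mQ$-factoriality equivalent to simpliciality of $\calC(X)$, and the analogues of Lemma~\ref{raggi estremali2} and Corollary~\ref{cor:raggi estremali} describe the extremal rays of $\calC(X)$. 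The symmetric version of Theorem~\ref{smooth-timashev}, given in \cite{Ru} (see also \cite{Ti}), then characterizes smoothness of $X$ by conditions i)--iii) formally identical to those of Theorem~\ref{smooth-timashev}, and the proof will consist in translating those conditions into the three asserted ones.

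For necessity I would argue as in the first part of the proof of Theorem~\ref{smooth Xsigma} (assuming $\tPhi$ non-simply laced, the simply laced case being vacuous for $(\star)$). If $X$ is smooth, condition i) of the criterion yields at once that every connected component of $\rho(\calD(X))$ has type $\sfA$, and simpliciality of $\calC(X)$ in ii) gives $\mQ$-factoriality. To see that $\tDelta\senza\rho(\calD(X))$ satisfies $(\star)$, suppose that a non-simply laced component of $\tDelta$ has its long simple root $\gra_L$ in $\tDelta\senza\rho(\calD(X))$ but not the adjacent short simple root $\gra_S$; then $\gra_S$ lies in a connected component $K=\{\gra_1,\dots,\gra_l\}$ of $\rho(\calD(X))$ numbered from $\gra_S$, with $\gra_l$ an extreme root of $\tDelta$ and $\ol K$ of type $\sfC_{l+1}$ or $\sfG_2$, and the relation $\sum_{i=1}^{l} i\,\gra_i^\vee=(l+1)\,\omega^\vee_{\gra_l}-m\,\omega^\vee_{\gra_L}$ (with $m\in\{2,3\}$) together with conditions ii) and iii) forces either $\omega^\vee_{\gra_L}\notin\grO^\vee$ (when $\ol K\neq\sfB_2$) or the nonexistence of $\pi\in\calC(X)^\vee\cap\mZ\tDelta$ with $\langle\pi,\gra_S^\vee\rangle=1$ (when $\ol K=\sfB_2$, so $\tPhi$ is of type $\sfB$) --- the same contradiction as in the group case.

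For sufficiency I would assume $X$ is $\mQ$-factorial, $\tDelta\senza\rho(\calD(X))$ satisfies $(\star)$, and all components of $\rho(\calD(X))$ have type $\sfA$, and check i)--iii) of the criterion following the second part of the proof of Theorem~\ref{smooth Xsigma}. Condition i) holds because $\mQ$-factoriality forces $\tDelta\senza\rho(\calD(X))$ to be connected (generalized Proposition~\ref{Q-fattorialita}), so $\rho(\calD(X))$ has at most $|\tDelta\senza\rho(\calD(X))|$ connected components. Condition ii) is simpliciality together with the fact that the primitive generators of the extremal rays form a basis of $\grO^\vee$, which follows by repeating the computation $\omega^\vee=\sum_{i=1}^{l-j} i\,\gra^\vee_{j+i}=(l-j+1)\,\omega^\vee_{\gra_l}+\langle\grb,\gra^\vee_{j+1}\rangle\,\omega^\vee_\grb$ for each $\grb$ in the closure but not an extreme of the relevant component of $\rho(\calD(X))$; here $(\star)$, now a hypothesis rather than a consequence of normality and Theorem~A, yields $\langle\grb,\gra^\vee_{j+1}\rangle=-1$, hence $-\omega^\vee_\grb\in N=\calC(X)\cap\grO^\vee$. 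Condition iii) is witnessed, because the components are of type $\sfA$, by the partition $\pi^K_i=(\gra_i^\vee)^\ast$ for $i\leq l$ and $\pi^K_{l+1}=(-\omega^\vee_{\gra_l})^\ast$, taken with respect to the dual basis of $\grO$.

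The hard part will be purely the bookkeeping needed to guarantee that the two steps using group-specific input survive the translation. First, the equivalence between normality and $(\star)$ used in Theorem~\ref{smooth Xsigma} fails here (see \S\ref{ssez:simmetrichenormalita}); I avoid it by keeping $X$ normal and $(\star)$ as standing hypotheses, so that step simply disappears. Second, the divisibility arithmetic in $\grO^\vee$ and the list of primitive simple coroots that enter the contradictions above must be reexamined for the restricted datum, and in particular the case where $\tPhi$ is of type $\mathsf{BC}_r$ deserves a separate check, since there the notions of long and short simple root and the primitivity of simple coroots in $\grO^\vee$ are not immediately inherited from $\Phi$. Because the colored cone and the Timashev--Ruzzi criterion depend only on the based restricted root datum together with $\rho(\calD(X))$, I expect this to be a routine verification rather than a genuinely new difficulty.
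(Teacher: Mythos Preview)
Your proposal is correct and matches the paper's approach exactly: the paper does not give a separate proof of Theorem~\ref{smooth general} at all, merely stating that ``Theorem~\ref{smooth Xsigma} also can be generalized to this setting with the same proof'' after noting that the equivalence between $(\star)$ and normality fails and must therefore be kept as a hypothesis. Your write-up is precisely the unpacking of that sentence, and your cautionary remarks about the $\mathsf{BC}_r$ case and the restricted-lattice arithmetic are more careful than the paper itself.
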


\end{document}